\providecommand{\U}[1]{\protect\rule{.1in}{.1in}}
\newtheorem{example}[theorem]{Example}
\begin{document}

\title{Characterizing weak solutions for vector optimization problems }
\author{N. Dinh\thanks{ International University, Vietnam National University - HCMC,
Linh Trung ward, Thu Duc district, Ho Chi Minh city (ndinh@hcmiu.edu.vn). }
\and M.A. Goberna\thanks{Department of Mathematics, University of Alicante,
Alicante, Spain (mgoberna@ua.es).}
\and D.H. Long\thanks{ Tien Giang University, 119 Ap Bac street, My Tho city, Tien
Giang province, Vietnam\newline(danghailong88@gmail.com).}
\and M.A. L\'{o}pez\thanks{Department of Mathematics, University of Alicante,
Alicante, Spain, and CIAO, Federation University, Ballarat, Australia
(marco.antonio@ua.es).} }
\maketitle
\date{4 February 2016}

\begin{abstract}
This paper provides characterizations of the weak solutions of optimization
problems where a given vector function $F,$ from a decision space $X$ to an
objective space $Y$, is "minimized" on the set of elements $x\in C$ (where
$C\subset X$ is a given nonempty constraint set), satisfying $G\left(
x\right)  \leqq_{S}0_{Z},$ where $G$ is another given vector function from $X
$ to a constraint space $Z$ with positive cone $S$. The three spaces $X,Y,$
and $Z$ are locally convex Hausdorff topological vector spaces, with $Y$ and
$Z$ partially ordered by two convex cones $K$ and $S,$ respectively, and
enlarged with a greatest and a smallest element. In order to get suitable
versions of the Farkas lemma allowing to obtain optimality conditions
expressed in terms of the data, the triplet $\left(  F,G,C\right)  ,$ we use
non-asymptotic representations of the $K-$epigraph of the conjugate function
of $F+I_{A},$ where $I_{A}$ denotes the indicator function of the feasible set
$A,$ that is, the function associating the zero vector of $Y$ to any element
of $A$ and the greatest element of $Y$ to any element of $X\diagdown A.$

\end{abstract}
\keyphrases{Vector optimization, weak minimal solutions, qualification conditions,
Farkas-type results for vector functions, strong duality}
\AMclass{58E17, 90C29, 90C46, 49N15}

\section{Introduction}

This paper analyzes vector optimization problems of the form%
\[%
\begin{array}
[c]{rrl}%
\mathrm{(VOP)}\ \  & \operatorname*{WMin}\left\{  F(x):x\in C,\;G(x)\leqq
_{S}0_{Z}\right\}  , &
\end{array}
\]
where $\operatorname*{WMin}$ stands for the task consisting of determining the
weakly minimal elements (concept defined in Section 2) of some subset of the
objective space $Y,$ which is equipped with a partial ordering $\leqq_{K}$
induced by a convex cone $K,$ the constraint set $C$ is a given subset of the
decision space $X,$ $\leqq_{S}$ denotes the partial ordering induced in the
constraint space $Z$ by a convex cone $S,$ $0_{Z}$ is the zero vector in $Z,$
$F\colon X\rightarrow{Y}\cup\left\{  +\infty_{Y}\right\}  $ and $G\colon
X\rightarrow Z\cup\left\{  +\infty_{Z}\right\}  ,$ with $+\infty_{Y} $ and
$+\infty_{Z}$ denoting "greatest elements" aggregated to $Y$ and $Z,$
respectively. We assume that $X,Y,Z$ are locally convex Hausdorff topological
vector spaces. The main purpose of this paper is to give conditions for a
given $\overline{x}\in A$ to satisfy $F\left(  \overline{x}\right)  \in$
$\operatorname*{WMin}F(A),$ where $A:=\{x\in C,\ G(x)\leqq_{S}0_{Z}\}$ is the
feasible set of $\mathrm{(VOP).}$ Moreover, we are particularly interested in
conditions which are expressed in terms of the data, that is, in conditions
only involving mathematical objects related to the triplet $\left(
F,G,C\right)  .$

Different concepts of solutions in vector optimization have been proposed,
each one having its own set of advantages and disadvantages. For instance,
regarding multiobjective optimization (when $Y=\mathbb{R}^{m}$ and
$K=\mathbb{R}_{+}^{m}$), it is usually admitted that weakly efficient
solutions, efficient solutions, and super efficient solutions are preferable
from the computational, practical, and stability perspectives, respectively
(see, e.g., \cite{BM09}, \cite{BZ93}, \cite{BGW09}, \cite{HL11}, \cite{JJ04},
\cite{Loh11}, and references therein). On the other hand, weak orders allow to
apply the elegant conjugate duality machinery (\cite{Bot09}). So,
computability and mathematical elegance are the main reasons to consider in
this paper weak orders and weak minimal solutions.

Consider now, for comparison purposes, a scalar optimization problem of the
form%
\[%
\begin{array}
[c]{rrl}%
\mathrm{(SOP)}\ \  & \operatorname*{Min}\left\{  f(x):x\in C,\;G(x)\leqq
_{S}0_{Z}\right\}  , &
\end{array}
\]
where $\operatorname*{Min}$ is the task of finding the minimum, if it exists,
of some subset of $Y:=\mathbb{R},$ with positive cone $\mathbb{R}_{+},$ whose
objective function is $f\colon X\rightarrow\mathbb{R\cup}\left\{  \pm
\infty\right\}  .$ More precisely, the optimality conditions for
$\mathrm{(SOP)}$ allow to determine those $\overline{x}\in A$ such that
$f\left(  \overline{x}\right)  =\min f\left(  A\right)  .$ A classical way of
handling $\mathrm{(SOP)}$ consists of reformulating this problem as an
unconstrained scalar one with objective function $f+i_{A},$ where $i_{A}$
denotes the indicator function of the feasible set $A,$ i.e., $i_{A}\left(
x\right)  =0$ if $x\in A$ and $i_{A}\left(  x\right)  =+\infty$ if $x\in
X\diagdown A.$ Optimality conditions involving $A$ (as the classical one that
the null functional is a subgradient of $f+i_{A}$ at $\overline{x}$) are
considered too abstract for practical purposes as a manageable description of
$A$ is seldom available. The epigraph of the Fenchel-Moreau conjugate of
$f+i_{A},$ denoted by $\operatorname*{epi}\left(  f+i_{A}\right)  ^{\ast},$
plays an important role in order to obtain checkable optimality conditions for
$\mathrm{(SOP),}$ that is, conditions which are expressed in terms of the
data, the triple $\left(  f,G,C\right)  $. This is usually done through the
introduction of qualification conditions on $\left(  f,G,C\right)  $ (called
constraint qualifications when they only involve $G$ and $C$)\ allowing to
represent $\operatorname*{epi}\left(  f+i_{A}\right)  ^{\ast}$ in terms of
$\left(  f,G,C\right)  $ (see, e.g., \cite[Theorem 8.2]{Bot09}).

In the same vein, in this paper we reformulate $\mathrm{(VOP)}\ $as an
unconstrained vector optimization problem with objective function $F+I_{A},$
where $I_{A}$\ denotes the indicator function of the feasible set $A,$ i.e.,
$I_{A}\left(  x\right)  =0_{Y}$ if $x\in A$ and $i_{A}\left(  x\right)
=+\infty_{Y}$ if $x\in X\diagdown A.$ After the introductory Section 2,
Section 3 provides different representations of the $K-$epigraph of the
conjugate of $F+I_{A},$ say $\operatorname*{epi}{}_{K}\left(  F+I_{A}\right)
^{\ast}$ (concept to be introduced in Section 2) in terms of $\left(
F,G,C\right)  $. These representations are called asymptotic when they involve
a limiting process (typically, in the form of closure of some set depending on
$F,$ $G,$ and $C$) and non-asymptotic otherwise. The main results in this
paper are Theorems \ref{theo2} and \ref{theovectorw}, which are alternative
extensions of \cite[Theorem 8.2]{Bot09} to the vector framework providing
non-asymptotic representations of $\operatorname*{epi}\nolimits_{K}\left(
F+I_{A}\right)  ^{\ast}$ under the same qualification conditions (Theorems
\ref{theovector2} and \ref{theovectorw2}). We provide in Section 4 two kinds
of Farkas-type results oriented to $\mathrm{(VOP)}$: those which characterize
the inclusion of $A$ in a second subset $B$ of $X$ depending on $F$\ under
certain set of assumptions $P$ are said to be \emph{Farkas lemmas} while those
establishing the equivalence of $P$ with some characterization of the
inclusion $A\subset B$ are called \emph{characterizations of Farkas lemma}.
Similarly, the final Section 5 provides \emph{optimality conditions}
establishing characterizations of the weakly minimal solutions to
$\mathrm{(VOP)}$ under $P$ and \emph{characterizations of optimality
conditions} asserting the equivalence of $P$ with some optimality conditions.
A strong duality theorem for $\mathrm{(VOP)}$ is obtained from the optimality
conditions. The results in Sections 4 and 5 can be seen as non-abstract
versions of the corresponding results in \cite[Sections 4 and 5]{DGLM16},
where $P$ involves the feasible set $A.$

\section{Preliminaries}

Throughout the paper $X,Y,Z$ are three given locally convex Hausdorff
topological vector spaces with topological dual spaces denoted by $X^{\ast
},Y^{\ast},Z^{\ast}$, respectively. The only topology we consider on dual
spaces is the weak*-topology.

For a set $U\subset X$, we denote by $\operatorname*{cl}U$,
$\operatorname*{conv}U$, $\operatorname*{cl}\operatorname*{conv}U$,
$\operatorname*{lin}U$, $\operatorname*{int}U,$ $\operatorname*{ri}U,$ and
$\operatorname*{sqri}U$ the \emph{closure}, the \emph{convex hull}, the
\emph{closed convex hull}, the \emph{linear hull}, the \emph{interior}, the
\emph{relative interior}, and the \emph{strong quasi-relative interior} of
$U,$ respectively. Note that $\operatorname*{cl}\operatorname*{conv}%
U=\operatorname*{cl}(\operatorname*{conv}U)$. The null vector in $X$ is
denoted by $0_{X}$ and the dimension of a linear subspace $U$ of $X$ by $\dim
U.$ Given two subsets $A$ and $B$ of a topological space, one says that $A$ is
\emph{closed regarding} $B$ if $B\cap\operatorname*{cl}A=B\cap A$ (see, e.g.
\cite[Section 9]{Bot09})$.$

We assume that $K$ is a given closed, pointed, convex cone in $Y$ with
nonempty interior, i.e., $\operatorname*{int}K\neq\emptyset$. A \emph{weak
ordering} in $Y$, "$<_{K}$", is defined as follows: for $y_{1},y_{2}\in Y$,
\[
y_{1}<_{K}y_{2}\quad\Longleftrightarrow\quad y_{1}-y_{2}\in
-\operatorname*{int}K,
\]
or equivalently, $y_{1}\not < _{K}y_{2}$ if and only if $y_{1}-y_{2}%
\notin-\operatorname*{int}K$.

We enlarge $Y$ by attaching a \emph{greatest element} $+\infty_{Y}$ and a
\emph{smallest element} $-\infty_{Y}$ with respect to $<_{K}$, which do not
belong to $Y$, and we denote $Y^{\bullet}:=Y\cup\{-\infty_{Y},+\infty_{Y}\}$.
By convention, $-\infty_{Y}<_{K}y<_{K}+\infty_{Y}$ for any $y\in Y $. We also
assume by convention that
\begin{gather}
-(+\infty_{Y})=-\infty_{Y},\qquad\qquad-(-\infty_{Y})=+\infty_{Y},\nonumber\\
(+\infty_{Y})+y=y+(+\infty_{Y})=+\infty_{Y},\quad\forall y\in Y\cup
\{+\infty_{Y}\},\label{conv}\\
(-\infty_{Y})+y=y+(-\infty_{Y})=-\infty_{Y},\quad\forall y\in Y\cup
\{-\infty_{Y}\}.\nonumber
\end{gather}
The sums $(-\infty_{Y})+(+\infty_{Y})$ and $(+\infty_{Y})+(-\infty_{Y})$ are
not considered in this paper.

Notice that in the space $Y$ the cone $K$ also generates another order
$\leqq_{K}$ defined as, for $y_{1},y_{2}\in Y$,
\[
y_{1}\leqq_{K}y_{2}\ \text{ if and only if }\ y_{2}\in y_{1}+K.
\]
It is obvious that the order $\leqq_{K}$ also can be extended to $Y^{\bullet}$
with the convention that $-\infty_{Y}\leqq_{K}y\leqq_{K}+\infty_{Y}$ for any
$y\in Y$ together with the others in \eqref{conv}.

We now recall the following basic definitions regarding the subsets of
$Y^{\bullet}$ (see, e.g., \cite{Bot09}, \cite[Definition 7.4.1]{BGW09},
\cite{Gr15}, \cite{JJ04}, \cite{KTZ05}, \cite{Tan92}, etc.):\medskip

\begin{definition}
\label{def1}Consider a set $M$ such that $\emptyset\neq M\subset Y^{\bullet}%
.$\newline1. An element $\bar{v}\in Y^{\bullet}$ is said to be a \emph{weakly
infimal element} of $M$ if for all $v\in M$ we have $v\not < _{K}\bar{v}$ and
if for any $\tilde{v}\in Y^{\bullet}$ such that $\bar{v}<_{K}\tilde{v}$ there
exists some $v\in M$ satisfying $v<_{K}\tilde{v}$. The set of all weakly
infimal elements of $M$ is denoted by $\operatorname*{WInf}M$ and is called
the \emph{weak infimum} of $M$.\newline2. An element $\bar{v}\in Y^{\bullet}$
is said to be a \emph{weakly supremal element} of $M$ if for all $v\in M$ we
have $\bar{v}\not < _{K}v$ and if for any $\tilde{v}\in Y^{\bullet}$ such that
$\tilde{v}<_{K}\bar{v}$ there exists some $v\in M$ satisfying $\tilde{v}%
<_{K}v$. The set of all weakly supremal elements of $M$ is denoted by
$\operatorname*{WSup}M$ and is called the \emph{weak supremum} of $M$%
.\newline3. The \emph{weak minimum} of $M$ is the set $\operatorname*{WMin}%
M=M\cap\operatorname*{WInf}M$ and its elements are the \emph{weakly minimal
elements} of $M$.\newline4. The \emph{weak maximum} of $M$ is the set
$\operatorname*{WMax}M=M\cap\operatorname*{WSup}M$ and its elements are the
\emph{weakly maximal elements} of $M$.\newline5. An element $\overline{v}\in
M$ is called a \emph{strongly maximal element} of $M$ if it holds $v\leqq
_{K}\overline{v}$ for all $v\in M$. The set of all strongly maximal elements
of $M$ is denoted by $\operatorname*{SMax}M$.\medskip
\end{definition}

Observe that, if $M\subset Y,$ then $\overline{v}\in\operatorname*{SMax}M$ if
and only if $M\subset\overline{v}-K$. Thus, if $M\subset Y$ then
\begin{equation}
\operatorname*{SMax}M=\{\bar{v}\in M:M\subset\bar{v}-K\}.\label{2.4}%
\end{equation}
Moreover, in this case, if $K$ is a pointed cone and $\operatorname*{SMax}%
M\neq\emptyset$ then $\operatorname*{SMax}M$ is a singleton, i.e., the
strongly maximum element of the set $M$ in this case, if exists, will be
unique. In such a case, we write $\overline{v}=\operatorname*{SMax}M$ instead
of $\operatorname*{SMax}M=\{\overline{v}\}$.

The next elementary properties will be used in the sequel:

$\bullet$ According to the definition above,
\begin{equation}%
\begin{array}
[c]{ll}%
+\infty_{Y}\in\operatorname*{WSup}M\; & \Longleftrightarrow
\;\operatorname*{WSup}M=\{+\infty_{Y}\}\;\\
& \Longleftrightarrow\;\forall\tilde{v}\in Y,\;\exists v\in M:\tilde{v}<_{K}v.
\end{array}
\label{wsup=infty}%
\end{equation}

$\bullet$ If $\emptyset\neq M\subset Y$ and $\operatorname*{WSup}%
M\neq\{+\infty_{Y}\}$, by \cite[Proposition 2.7(i)]{DGLM16}, one has
\begin{equation}
\operatorname*{WSup}M=\operatorname*{cl}(M-\operatorname*{int}K)\setminus
(M-\operatorname*{int}K).\label{formularwsup}%
\end{equation}

$\bullet$ If $M\neq\emptyset$ and $+\infty_{Y}\not \in M\neq\{-\infty_{Y}\}$
then, by \cite[Proposition 2.7(ii)]{DGLM16}, one gets
\begin{equation}
\operatorname*{WMax}M=M\setminus(M-\operatorname*{int}K).\label{eqmax}%
\end{equation}

$\bullet$ If $\emptyset\neq M\subset Y$ and $\operatorname*{WSup}%
M\neq\{+\infty_{Y}\}$, from \cite[Proposition 2.4]{Tan92}, one gets
\begin{equation}
\operatorname*{WSup}M-\operatorname*{int}K=M-\operatorname*{int}K.\label{1.3}%
\end{equation}

We shall also use the next lemma.\medskip

\begin{lemma}
\label{lem1.1} Given $\emptyset\neq M\subset Y^{\bullet},$ the following
statements hold true:\newline$(i)$\textrm{\ }If $+\infty_{Y}\not \in M$ and
$M\cap\operatorname*{int}K=\emptyset,$ then $\operatorname*{WSup}%
M\neq\{+\infty_{Y}\};$\textrm{\newline}$(ii)$\textrm{\ }If there exists
$v_{0}\in M\cap\operatorname*{int}K$ such that $\lambda v_{0}\in M$ for all
$\lambda>0,$ then $\operatorname*{WSup}M=\{+\infty_{Y}\};$\textrm{\ \newline%
}$(iii)$\textrm{\ }If $M\subset-K$ and $0_{Y}\in M$ then $\operatorname*{WSup}%
M=\operatorname*{WSup}(-K)$.\textrm{\ }
\end{lemma}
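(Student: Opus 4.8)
The plan is to handle the three items separately, reducing each to the characterization \eqref{wsup=infty} of the equality $\operatorname*{WSup}M=\{+\infty_{Y}\}$, and invoking \eqref{formularwsup} in addition for item $(iii)$. For $(i)$, I would refute the right-hand side of \eqref{wsup=infty} by exhibiting a single $\tilde{v}\in Y$ for which no $v\in M$ satisfies $\tilde{v}<_{K}v$, the obvious choice being $\tilde{v}:=0_{Y}$: indeed $0_{Y}<_{K}v$ would force $v\in\operatorname*{int}K$, which is impossible for $v\in M\cap Y$ by the hypothesis $M\cap\operatorname*{int}K=\emptyset$, impossible for $v=-\infty_{Y}$ (the smallest element), and excluded for $v=+\infty_{Y}$ since $+\infty_{Y}\notin M$. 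Hence $\operatorname*{WSup}M\neq\{+\infty_{Y}\}$.

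For $(ii)$, I would instead verify the right-hand side of \eqref{wsup=infty} directly, using the elements $\lambda v_{0}\in M$, $\lambda>0$. Given an arbitrary $\tilde{v}\in Y$, the task is to produce $\lambda>0$ with $\tilde{v}<_{K}\lambda v_{0}$, i.e. $\lambda v_{0}-\tilde{v}\in\operatorname*{int}K$. Since $v_{0}\in\operatorname*{int}K$, choose a neighbourhood $U$ of $0_{Y}$ with $v_{0}+U\subset\operatorname*{int}K$; because $-\tfrac{1}{\lambda}\tilde{v}\to0_{Y}$ as $\lambda\to+\infty$ (continuity of scalar multiplication), one has $-\tfrac{1}{\lambda}\tilde{v}\in U$ for all large $\lambda$, hence $v_{0}-\tfrac{1}{\lambda}\tilde{v}\in\operatorname*{int}K$, and multiplying by $\lambda>0$ (which maps $\operatorname*{int}K$ onto itself, $K$ being a cone) gives $\lambda v_{0}-\tilde{v}\in\operatorname*{int}K$. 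Thus \eqref{wsup=infty} yields $\operatorname*{WSup}M=\{+\infty_{Y}\}$.

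For $(iii)$, I would first observe that $0_{Y}\in M\subset-K\subset Y$, so $+\infty_{Y}\notin M$ and, by pointedness of $K$ (discarding the trivial case $Y=\{0_{Y}\}$, where $M=-K$ and there is nothing to prove), $M\cap\operatorname*{int}K\subset(-K)\cap\operatorname*{int}K=\emptyset$, and likewise with $-K$ in place of $M$. Hence item $(i)$ applies to both $M$ and $-K$, giving $\operatorname*{WSup}M\neq\{+\infty_{Y}\}$ and $\operatorname*{WSup}(-K)\neq\{+\infty_{Y}\}$, so by \eqref{formularwsup} it suffices to check that $M-\operatorname*{int}K=(-K)-\operatorname*{int}K$. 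The inclusion ``$\subset$'' is clear since $M\subset-K$; for ``$\supset$'' I would use that $K+\operatorname*{int}K=\operatorname*{int}K$ (the left-hand side is open, is contained in $K+K=K$, and contains $0_{Y}+\operatorname*{int}K$), whence $(-K)-\operatorname*{int}K=-(K+\operatorname*{int}K)=-\operatorname*{int}K=0_{Y}-\operatorname*{int}K\subset M-\operatorname*{int}K$, the last inclusion holding because $0_{Y}\in M$. Applying \eqref{formularwsup} once to $M$ and once to $-K$ then gives $\operatorname*{WSup}M=\operatorname*{WSup}(-K)$.

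The hard part, if any, will be the absorption argument in $(ii)$ --- that $\operatorname*{int}K$ ``swallows'' every translate of itself along the ray $\mathbb{R}_{+}v_{0}$ --- which is, however, standard; elsewhere the only delicate point is the bookkeeping of the adjoined elements $\pm\infty_{Y}$ under the conventions \eqref{conv}, together with the degenerate case $Y=\{0_{Y}\}$ in $(iii)$.
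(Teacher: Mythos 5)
Your proposal is correct and follows essentially the same route as the paper: part $(i)$ via the witness $\tilde{v}=0_{Y}$ against \eqref{wsup=infty}, part $(ii)$ via continuity of $t\mapsto v_{0}-t\tilde{v}$ at $0$ combined with the cone property (the paper phrases this as a contradiction, you argue it directly, but it is the same estimate), and part $(iii)$ by showing $M-\operatorname*{int}K=-\operatorname*{int}K=(-K)-\operatorname*{int}K$ and applying \eqref{formularwsup} together with $(i)$. Your extra bookkeeping of $\pm\infty_{Y}$ and of the degenerate case is harmless and only makes explicit what the paper leaves implicit.
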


\begin{proof}
$(i)$ Assume that $M\cap\operatorname*{int}K=\emptyset$. Then, $0_{Y}%
\not < _{K}v$ for all $v\in M$ and it follows from \eqref{wsup=infty} that
$\operatorname*{WSup}M\neq\{+\infty_{Y}\}$.

$(ii)$ Assume that there is $v_{0}\in M\cap\operatorname*{int}K$ such that
$\lambda v_{0}\in M$ for all $\lambda>0$. If $\operatorname*{WSup}%
M\neq\{+\infty_{Y}\}$, then by \eqref{wsup=infty}, there exists $\tilde{v}\in
Y$ such that $\tilde{v}\not < _{K}v$ for any $v\in M$. We get
\begin{align}
\tilde{v}\not < _{K}\lambda v_{0},\;\forall\lambda>0\Longrightarrow\; &
\lambda v_{0}-\tilde{v}\not \in \operatorname*{int}K,\;\forall\lambda
>0\nonumber\\
\Longrightarrow\; &  v_{0}-\frac{1}{\lambda}\tilde{v}\not \in
\operatorname*{int}K,\;\forall\lambda>0.\label{eq2}%
\end{align}
On the other hand, because of the continuity of the map $t\mapsto
v_{0}-t\tilde{v}$ at $0$ and $v_{0}\in\operatorname*{int}K$, there exists
$\epsilon>0$ such that
\[
t\in\left]  -\epsilon,\epsilon\right[  \;\Longrightarrow\;v_{0}-t\tilde{v}%
\in\operatorname*{int}K,
\]
which contradicts \eqref{eq2} and the proof is complete.

$(iii)$ Assume that $M\subset-K$ and $0_{Y}\in M$. Then $M-\operatorname*{int}%
K=-\operatorname*{int}K$. Indeed, $M-\operatorname*{int}K\subset
-K-\operatorname*{int}K=-\operatorname*{int}K$. Since $0_{Y}\in M$, we also
have $-\operatorname*{int}K\subset M-\operatorname*{int}K$. On other hand,
because $K$ is a pointed cone, $M\subset-K$ yields $M\cap\operatorname*{int}%
K=\emptyset$. So we get from $(i)$ that $\operatorname*{WSup}M\neq\{+\infty
\}$. According to \eqref{formularwsup},
\[
\operatorname*{WSup}M=\operatorname*{cl}(M-\operatorname*{int}K)\setminus
(M-\operatorname*{int}K)=\operatorname*{cl}(-\operatorname*{int}%
K)\setminus(-\operatorname*{int}K)=\operatorname*{WSup}(-K)
\]
and we are done. \medskip
\end{proof}

\begin{lemma}
\label{pro2.10} Assume $\emptyset\neq M\subset Y,$ $\operatorname*{WSup}%
M\subset Y,$ and there exists $v_{0}\in Y\setminus(-K)$ such that $\lambda
v_{0}\in M $ for all $\lambda>0$. Then $\operatorname*{SMax}%
(\operatorname*{WSup}M)=\emptyset$.
\end{lemma}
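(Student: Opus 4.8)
The plan is to argue by contradiction, turning the strong-maximality hypothesis into an inclusion between translates of $\operatorname*{int}K$ and then contradicting $v_{0}\notin-K$ via a limiting argument along the ray $\{\lambda v_{0}:\lambda>0\}$. So, suppose $\operatorname*{SMax}(\operatorname*{WSup}M)\neq\emptyset$ and pick $\bar{v}$ in this set. Since $\operatorname*{WSup}M\subset Y$ by hypothesis, formula \eqref{2.4} applied with $\operatorname*{WSup}M$ in place of $M$ gives $\operatorname*{WSup}M\subset\bar{v}-K$. Subtracting $\operatorname*{int}K$ from both sides and using the elementary identity $K+\operatorname*{int}K=\operatorname*{int}K$ (valid for any convex cone $K$ with $\operatorname*{int}K\neq\emptyset$) yields $\operatorname*{WSup}M-\operatorname*{int}K\subset\bar{v}-\operatorname*{int}K$.

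Next I would bring in \eqref{1.3}: because $\emptyset\neq M\subset Y$ and $\operatorname*{WSup}M\neq\{+\infty_{Y}\}$ (which holds precisely because $\operatorname*{WSup}M\subset Y$), one has $\operatorname*{WSup}M-\operatorname*{int}K=M-\operatorname*{int}K$. Combining this with the inclusion from the previous step gives $M-\operatorname*{int}K\subset\bar{v}-\operatorname*{int}K$. Now I would exploit the ray hypothesis: fix any $k_{0}\in\operatorname*{int}K$ (nonempty by the standing assumption on $K$); for every $\lambda>0$ we have $\lambda v_{0}\in M$, hence $\lambda v_{0}-k_{0}\in\bar{v}-\operatorname*{int}K$, i.e. $\bar{v}-\lambda v_{0}+k_{0}\in\operatorname*{int}K$. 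Dividing by $\lambda$ and using that $\operatorname*{int}K$ is a cone, $\tfrac{1}{\lambda}(\bar{v}+k_{0})-v_{0}\in\operatorname*{int}K$ for all $\lambda>0$.

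Finally, letting $\lambda\to+\infty$, the left-hand side converges to $-v_{0}$, so $-v_{0}\in\operatorname*{cl}(\operatorname*{int}K)$; since $K$ is closed, convex, and has nonempty interior, $\operatorname*{cl}(\operatorname*{int}K)=K$, whence $v_{0}\in-K$, contradicting $v_{0}\in Y\setminus(-K)$. Therefore $\operatorname*{SMax}(\operatorname*{WSup}M)=\emptyset$. I do not expect a genuine obstacle here: the only points requiring care are the three standard cone facts used above ($K+\operatorname*{int}K=\operatorname*{int}K$, $\operatorname*{int}K$ is a cone, and $\operatorname*{cl}(\operatorname*{int}K)=K$) and the legitimacy of the passage to the limit, together with the verification that \eqref{1.3} is applicable, i.e. that $\operatorname*{WSup}M\neq\{+\infty_{Y}\}$ — which is immediate from $\operatorname*{WSup}M\subset Y$.
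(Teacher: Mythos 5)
Your proof is correct and follows essentially the same contradiction argument as the paper: both extract $\operatorname*{WSup}M\subset\bar{v}-K$ from \eqref{2.4}, transfer this back to $M$ through \eqref{1.3} (the identity $M-\operatorname*{int}K=\operatorname*{WSup}M-\operatorname*{int}K$), and then contradict $v_{0}\notin-K$ by a limiting argument along the ray $\{\lambda v_{0}:\lambda>0\}$ that rests on the closedness of $K$. The only cosmetic differences are that you work with the inclusion $M-\operatorname*{int}K\subset\bar{v}-\operatorname*{int}K$ (thus bypassing \eqref{formularwsup}, which the paper uses to reach $M\subset\bar{v}-K$) and that you conclude via the limit $\lambda\to+\infty$ of $\tfrac{1}{\lambda}(\bar{v}+k_{0})-v_{0}\in\operatorname*{int}K$ rather than the paper's local continuity argument at $t=0$; both steps use exactly the same underlying facts.
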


\begin{proof}
Let us suppose by contradiction that $\operatorname*{SMax}%
(\operatorname*{WSup}M)\neq\emptyset$ and $\bar{v}=\operatorname*{SMax}%
(\operatorname*{WSup}M)$. Since $\operatorname*{WSup}M\subset Y,$ one has
$\operatorname*{WSup}M\subset\bar{v}-K$ (see (\ref{2.4})). It follows from
\eqref{formularwsup} and \eqref{1.3} that
\begin{align*}
M\subset\operatorname*{cl}(M-\operatorname*{int}K) &  =[\operatorname*{cl}%
(M-\operatorname*{int}K)\setminus(M-\operatorname*{int}K)]\cup
(M-\operatorname*{int}K)\\
&  =\operatorname*{WSup}M\cup(\operatorname*{WSup}M-\operatorname*{int}K)\\
&  \subset\operatorname*{WSup}M-K,
\end{align*}
and consequently, $M\subset\bar{v}-K-K=\bar{v}-K.$ Thus, from the assumption
that $\lambda v_{0}\in M$ for all $\lambda>0$, one has
\begin{equation}
\lambda v_{0}\in\bar{v}-K,\;\forall\lambda>0\Longrightarrow\;v_{0}-\frac
{1}{\lambda}\bar{v}\in-K,\;\forall\lambda>0.\label{2.14}%
\end{equation}
On other hand, because $v_{0}\in Y\setminus(-K)$, the set $-K$ is closed, and
the map $t\mapsto v_{0}-t\bar{v}$ is continuous at $t=0$, we can find
$\epsilon>0$ such that
\[
t\in]-\epsilon,\epsilon\lbrack\;\Longrightarrow\;v_{0}-t\bar{v}\in
Y\setminus(-K),
\]
which contradicts \eqref{2.14} and the proof is complete. \medskip
\end{proof}

We denote by $\mathcal{L}(X,Y)$ the space of linear continuous mappings from
$X$ to $Y$, and by $0\mathbf{{_{\mathcal{L}}}}\in\mathcal{L}(X,Y)$ the zero
mapping defined by $0\mathbf{{_{\mathcal{L}}}}(x)=0_{Y}$ for all $x\in X$.
Obviously, $\mathcal{L}(X,Y)=X^{\ast\text{ }}$whenever $Y=\mathbb{R}.$ We
consider $\mathcal{L}(X,Y)$ equipped with the so-called \emph{weak topology},
that is, the one defined by the pointwise convergence. In other words, given a
net $(L_{i})_{i\in I}\subset\mathcal{L}(X,Y)$ and $L\in\mathcal{L}(X,Y)$,
$L_{i}\rightarrow L$ means that $L_{i}(x)\rightarrow L(x)$ in $Y$ for all
$x\in X$.

Given a vector-valued mapping $F\colon X\rightarrow Y^{\bullet}$, the
\emph{domain} of $F$ is defined by
\[
\operatorname{dom}F:=\{x\in X:F\left(  x\right)  \neq+\infty_{Y}\}
\]
and $F$ is \emph{proper} when $\operatorname{dom}F\neq\emptyset$ and
$-\infty_{Y}\notin F(X)$. The \emph{$K$-epigraph} of $F$, denoted by
$\operatorname*{epi}{}_{K}F$, is defined by
\[
\operatorname*{epi}{}_{K}F:=\{(x,y)\in X\times Y:y\in F(x)+K\}.
\]
We say that $F$ is $K-$\emph{convex} ($K-$\emph{epi closed}\textit{)} if
$\operatorname*{epi}{}_{K}\,F$ is a convex set (a closed set in the product
space, respectively). If $F$ is $K-$convex, it is evident that
$\operatorname{dom}F$ is a convex set in $X.$\medskip

\begin{definition}
\label{def1.2} The set-valued map $F^{\ast}\colon\mathcal{L}%
(X,Y)\rightrightarrows Y^{\bullet}$ defined by
\[
F^{\ast}(L):=\operatorname*{WSup}\{L(x)-F(x):x\in X\}=\operatorname*{WSup}%
\{(L-F)(X)\},
\]
is called the \emph{conjugate map} of $F$. The \emph{domain} and the
(\emph{strong}) \emph{"max-domain"} of $F^{\ast}$ are defined as%
\[
\operatorname{dom}F^{\ast}:=\big\{L\in\mathcal{L}(X,Y)\,:\,F^{\ast}%
(L)\neq\{+\infty_{Y}\}\big\},
\]
and
\[
\operatorname{dom}_{M}F^{\ast}:=\big\{L\in\mathcal{L}(X,Y)\,:\,F^{\ast
}(L)\subset Y\text{ and }\operatorname*{SMax}F^{\ast}(L)\neq\emptyset\},
\]
respectively, while the \emph{$K$-epigraph} of $F^{\ast}$ is
\[
\operatorname*{epi}{}_{K}F^{\ast}:=\big\{(L,y)\in\mathcal{L}(X,Y)\times Y:y\in
F^{\ast}(L)+K\big\}.
\]

\end{definition}

Let $S$ be a nonempty closed and convex cone in $Z$ and $\leqq_{S}$ be the
ordering on $Z$\ induced by the cone $S$, i.e.,
\begin{equation}
z_{1}\leqq_{S}z_{2}\ \text{ if and only if }\ z_{2}-z_{1}\in S.\label{order}%
\end{equation}
We also enlarge $Z$ by attaching a greatest element $+\infty_{Z}$ and a
smallest element $-\infty_{Z}$ (with respect to $\leqq_{S}$) which do not
belong to $Z$, and define $Z^{\bullet}:=Z\cup\{-\infty_{Z},\ +\infty_{Z}\}$.
In $Z^{\bullet}$ we adopt the same conventions as in \eqref{conv}.

For $T\in\mathcal{L}(Z,Y)$ and $G\colon X\rightarrow Z\cup\{+\infty_{Z}\}$, we
define the composite function $T\circ G\colon X\rightarrow{Y}^{\bullet}$ as
follows:
\[
(T\circ G)(x):=\left\{
\begin{array}
[c]{ll}%
T(G(x)), & \text{if }G(x)\in Z,\\
+\infty_{Y}, & \text{if $G(x)=+\infty_{Z}.$}%
\end{array}
\right.
\]

Recall that $S$ is a nonempty closed and convex cone in $Z$. Let us set%
\[
\mathcal{L}_{+}(S,K):=\{T\in\mathcal{L}(Z,Y):\ T(S)\subset K\}\ \text{ }%
\]
and
\[
\mathcal{L}_{+}^{w}(S,K):=\{T\in\mathcal{L}(Z,Y):T(S)\cap(-\operatorname*{int}%
K)=\emptyset\}.
\]

It is clear that $\mathcal{L}_{+}(S,K)\subset\mathcal{L}_{+}^{w}(S,K)$.
Indeed, for any $T\in\mathcal{L}_{+}(S,K)$, one has $T(S)\subset K$. So,
$T(S)\cap(-\operatorname*{int}K)=\emptyset$ (as $K$ is pointed cone) and
hence, $T\in\mathcal{L}_{+}^{w}(S,K)$. However, the inclusion $\mathcal{L}%
_{+}(S,K)\subset\mathcal{L}_{+}^{w}(S,K)$ is generally strict (see Example
\ref{Example1} below).

It is worth noticing that, when $Y=\mathbb{R}$ and $K=\mathbb{R}_{+},$ the
conjugate, the domain and the \emph{$K$-}epigraph of $f:X\longrightarrow
\mathbb{R\cup}\left\{  +\infty\right\}  $ are nothing else than the ordinary
conjugate, the domain, and the epigraph of the scalar function $f$, i.e.,
\[
f^{\ast}\left(  x^{\ast}\right)  :=\sup_{x\in X}\left(  \left\langle x^{\ast
},x\right\rangle -f\left(  x\right)  \right)  ,\ \forall x^{\ast}\in X^{\ast},
\]%
\[
\operatorname*{dom}f:=\{x\in X:\ f(x)\neq+\infty\},
\]
and
\[
\operatorname*{epi}f:=\{(x,r)\in X\times\mathbb{R}:\ x\in\operatorname*{dom}%
f,\ f(x)\leq r\},
\]
respectively. Moreover, since
\[
T(S)\cap(-\operatorname*{int}\mathbb{R}_{+})=\emptyset\quad\Longleftrightarrow
\quad T(S)\subset\mathbb{R}_{+},
\]
we have
\[
\mathcal{L}_{+}^{w}(S,\mathbb{R}_{+})=\mathcal{L}_{+}(S,\mathbb{R}_{+}%
)=S^{+}:=\{z^{\ast}\in Z^{\ast}:\ \langle z^{\ast},s\rangle\geq0\ \text{for
all}\ s\in S\},
\]
in other words, the (positive) \textit{dual cone} $S^{+}$ of $S$ in the sense
of convex analysis.

In order to obtain a suitable interpretation of $\mathcal{L}_{+}^{w}(S,K)$ we
must extend the concept of indicator function from scalar to vector functions:
the \emph{indicator map} $I_{D}:X\rightarrow{Y}^{\bullet}$ of a set $D\subset
X$ is defined by
\[
I_{D}(x)=\left\{
\begin{array}
[c]{ll}%
0_{Y}, & \text{if }x\in D,\\
+\infty_{Y}, & \text{otherwise.}%
\end{array}
\right.
\]
In the case $Y=\mathbb{R}$, $I_{D}$ is the usual indicator function $i_{D}%
$.\medskip

\begin{lemma}
\label{lem4.1} One has
\[
\mathcal{L}_{+}^{w}(S,K)=\operatorname{dom}I_{-S}^{\ast}\text{ and
}\,\mathcal{L}_{+}(S,K)=\operatorname{dom}_{M}I_{-S}^{\ast}.
\]

\end{lemma}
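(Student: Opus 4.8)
The plan is to compute the conjugate map $I_{-S}^{\ast}$ directly from its definition and then read off both the domain and the strong max-domain. Fix $T\in\mathcal{L}(Z,Y)$. Since $I_{-S}(x)=0_{Y}$ for $x\in -S$ and $+\infty_{Y}$ otherwise, the conventions in \eqref{conv} give $(T-I_{-S})(x)=T(x)$ when $x\in -S$ and $-\infty_{Y}$ otherwise, so that
\[
I_{-S}^{\ast}(T)=\operatorname*{WSup}\{T(x)-I_{-S}(x):x\in X\}
=\operatorname*{WSup}\big(T(-S)\big)=\operatorname*{WSup}\big(-T(S)\big),
\]
using that the smallest element $-\infty_{Y}$ never affects a weak supremum (it can be discarded from $M$ unless $M=\{-\infty_Y\}$, which does not occur here because $0_Z\in S$ forces $0_Y\in -T(S)$). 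Thus everything reduces to analysing $\operatorname*{WSup}\big(-T(S)\big)$ for the convex cone $-T(S)\subset Y$.

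First I would prove the domain equality. By \eqref{wsup=infty}, $T\notin\operatorname{dom}I_{-S}^{\ast}$ exactly when $I_{-S}^{\ast}(T)=\{+\infty_{Y}\}$, i.e.\ when for every $\tilde v\in Y$ there is some $v\in -T(S)$ with $\tilde v<_{K}v$, equivalently $v-\tilde v\in\operatorname*{int}K$. I claim this happens if and only if $T(S)\cap(-\operatorname*{int}K)\neq\emptyset$, i.e.\ $T\notin\mathcal{L}_{+}^{w}(S,K)$. For the easy direction: if $T(S)\cap(-\operatorname*{int}K)=\emptyset$, then $0_Y\not<_K v$ for all $v\in -T(S)$, so $\operatorname*{WSup}(-T(S))\neq\{+\infty_Y\}$ and $T\in\operatorname{dom}I_{-S}^{\ast}$; this is precisely part $(i)$ of Lemma~\ref{lem1.1}. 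For the converse, if there is $s_0\in S$ with $-T(s_0)\in\operatorname*{int}K$, set $v_0:=-T(s_0)\in -T(S)$; since $S$ is a cone, $\lambda s_0\in S$ and hence $\lambda v_0\in -T(S)$ for all $\lambda>0$, so part $(ii)$ of Lemma~\ref{lem1.1} gives $I_{-S}^{\ast}(T)=\{+\infty_Y\}$, i.e.\ $T\notin\operatorname{dom}I_{-S}^{\ast}$. Combining, $\operatorname{dom}I_{-S}^{\ast}=\mathcal{L}_{+}^{w}(S,K)$.

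Next the max-domain. By definition $T\in\operatorname{dom}_M I_{-S}^{\ast}$ iff $I_{-S}^{\ast}(T)\subset Y$ and $\operatorname*{SMax}I_{-S}^{\ast}(T)\neq\emptyset$. The containment in $Y$ already forces $T\in\mathcal{L}_{+}^{w}(S,K)$ by the previous paragraph, so we may work with $M:=-T(S)$, a convex cone with $0_Y\in M$ and $M\cap\operatorname*{int}K=\emptyset$. I would show $\operatorname*{SMax}(\operatorname*{WSup}M)\neq\emptyset$ iff $M\subset -K$, i.e.\ $-T(S)\subset -K$, i.e.\ $T(S)\subset K$, i.e.\ $T\in\mathcal{L}_{+}(S,K)$. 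If $T(S)\subset K$ then $M\subset -K$ with $0_Y\in M$, so by Lemma~\ref{lem1.1}$(iii)$, $\operatorname*{WSup}M=\operatorname*{WSup}(-K)$; and one checks $\operatorname*{WSup}(-K)=-\operatorname*{bd}K\subset Y$ has strong maximum $0_Y$, since $-K-\operatorname*{int}K=-\operatorname*{int}K$ gives, via \eqref{formularwsup}, $\operatorname*{WSup}(-K)=\operatorname*{cl}(-\operatorname*{int}K)\setminus(-\operatorname*{int}K)\subset -K=0_Y-K$, so $0_Y=\operatorname*{SMax}\operatorname*{WSup}M$. Conversely, suppose $T(S)\not\subset K$, so there is $s_0\in S$ with $v_0:=-T(s_0)\in Y\setminus(-K)$; since $S$ is a cone, $\lambda v_0\in M$ for all $\lambda>0$, and — assuming $\operatorname*{WSup}M\subset Y$, which we have — Lemma~\ref{pro2.10} yields $\operatorname*{SMax}(\operatorname*{WSup}M)=\emptyset$, so $T\notin\operatorname{dom}_M I_{-S}^{\ast}$. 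Hence $\operatorname{dom}_M I_{-S}^{\ast}=\mathcal{L}_{+}(S,K)$.

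The main obstacle I anticipate is bookkeeping with the extended-valued conventions at the outset — justifying cleanly that $I_{-S}^{\ast}(T)=\operatorname*{WSup}(-T(S))$ with the $-\infty_Y$ values harmlessly removed — together with the small sub-lemma that $\operatorname*{SMax}\operatorname*{WSup}(-K)=\{0_Y\}$; both the domain and max-domain statements then fall out of Lemmas~\ref{lem1.1} and~\ref{pro2.10} essentially by matching hypotheses, with no serious analytic content beyond what those lemmas already supply.
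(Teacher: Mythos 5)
Your proposal is correct and follows essentially the same route as the paper's own proof: the domain equality via parts $(i)$ and $(ii)$ of Lemma \ref{lem1.1}, and the max-domain equality via part $(iii)$ together with Lemma \ref{pro2.10} applied to $M=T(-S)$. Your extra bookkeeping on the $-\infty_{Y}$ values and the explicit check that $\operatorname*{SMax}\operatorname*{WSup}(-K)=\{0_{Y}\}$ merely spell out steps the paper asserts without detail.
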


\begin{proof}
$\bullet$ Taking an arbitrary $T\in\mathcal{L}(Z,Y)$, one has
\begin{equation}
I_{-S}^{\ast}(T)=\operatorname*{WSup}\{T(z):z\in-S\}=\operatorname*{WSup}%
T(-S),\label{2.3}%
\end{equation}
and so, $T\in\operatorname{dom}I_{-S}^{\ast}$ if and only if
$\operatorname*{WSup}T(-S)\neq\{+\infty_{Y}\}$. Two cases are possible.

If $T\in\mathcal{L}_{+}^{w}(S,K)$, $T(S)\cap(-\operatorname*{int}K)=\emptyset
$, and consequently, $T(-S)\cap\operatorname*{int}K=\emptyset$. So, it follows
from Lemma \ref{lem1.1}$(i)$ that $\operatorname*{WSup}T(-S)\neq\{+\infty
_{Y}\}$.

If $T\in\mathcal{L}(X,Y)\setminus\mathcal{L}_{+}^{w}(S,K)$, there exists
$v_{0}\in T(S)\cap(-\operatorname*{int}K)$. Then, $-v_{0}\in
\operatorname*{int}K$ and $\lambda(-v_{0})\in T(-S)$ for all $\lambda>0$
because $S$ is a cone. So, by Lemma \ref{lem1.1}$(ii)$, $\operatorname*{WSup}%
T(-S)=\{+\infty_{Y}\}$. Consequently, $\operatorname{dom}I_{-S}^{\ast
}=\mathcal{L}_{+}^{w}(S,K)$.

\medskip$\bullet$ Take an arbitrary $T\in\mathcal{L}_{+}(S,K)$. Then one has
$T(S)\subset K$, or equivalently, $T(-S)\subset-K$. It is clear that
$0_{Y}=T(0_{Z})\in T(-S)$. According to Lemma \ref{lem1.1}$(iii)$,
$I_{-S}^{\ast}(T)=\operatorname*{WSup}T(-S)=\operatorname*{WSup}(-K)$. So,
$\operatorname*{SMax}I_{-S}^{\ast}(T)=\{0_{Y}\}\neq\emptyset$, and
consequently, $T\in\operatorname{dom}_{M}I_{-S}^{\ast}$. \newline\indent Now
take an arbitrary $T\in\mathcal{L}(Z,Y)\diagdown\mathcal{L}_{+}(S,K)$. One has
$T(S)\not \subset K$, or equivalently, there exists $s_{0}\in-S$ such that
$T(s_{0})\notin-K.$ Thus, applying Lemma \ref{pro2.10} with $M=T(-S)$ and
$v_{0}=T(s_{0})$, we get that, if $\operatorname*{WSup}T(-S)\subset Y,$ then
\[
\operatorname*{SMax}\left[  \operatorname*{WSup}T(-S)\right]
=\operatorname*{SMax}\left[  I_{-S}^{\ast}\left(  T\right)  \right]
=\emptyset.
\]
So, $T\notin\operatorname{dom}_{M}I_{-S}^{\ast}$ and we are done. \medskip
\end{proof}

We shall use the following simple example for illustrative purposes throughout
the paper.\medskip

\begin{example}
\label{Example1} Take $X=Z=\mathbb{R}$, $Y=\mathbb{R}^{2}$, $K=\mathbb{R}%
_{+}^{2}$, $S=\mathbb{R}_{+},$ $F:\mathbb{R}\longrightarrow\mathbb{R}^{2}%
$\ the null mapping,\ and $G:\mathbb{R}\longrightarrow\mathbb{R}$\ such that
$G\left(  x\right)  =-x$ for all $x\in\mathbb{R}.$ Then $\mathcal{L}%
(Z,Y)=\mathbb{R}^{2},$ $\mathcal{L}_{+}(S,K)=\mathbb{R}_{+}^{2},$ and
$\mathcal{L}_{+}^{w}(S,K)=\left\{  \left(  t_{1},t_{2}\right)  \in
\mathbb{R}^{2}:t_{1}\geq0\vee t_{2}\geq0\right\}  .$ Moreover, given $\left(
\alpha,\beta\right)  \in\mathbb{R}^{2},$%
\[
F^{\ast}\left(  \alpha,\beta\right)  =\operatorname*{WSup}\{\mathbb{R}\left(
\alpha,\beta\right)  \}=\left\{
\begin{array}
[c]{ll}%
\left[  (-\mathbb{R}_{+})\times\{0\}\right]  \cup\left[  \{0\}\times
(-\mathbb{R}_{+})\right]  , & \text{if }\alpha=\beta=0,\\
\left\{  +\infty_{\mathbb{R}^{2}}\right\}  , & \text{if }\alpha\beta>0,\\
\mathbb{R}\left(  \alpha,\beta\right)  , & \text{otherwise.}%
\end{array}
\right.
\]
Thus, $\operatorname*{epi}{}_{K}F^{\ast}=%
{\displaystyle\bigcup\limits_{i=1}^{4}}
N_{i},$ where%
\[%
\begin{array}
[c]{ll}%
N_{1} & =\left\{  \left(  0,0,y_{1},y_{2}\right)  :y_{1}\geq0\vee y_{2}%
\geq0\right\}  ,\\
N_{2} & =\left\{  \left(  \alpha,\beta,y_{1},y_{2}\right)  :\alpha
\beta<0\wedge y_{2}\geq\frac{\beta}{\alpha}y_{1}\right\}  ,\\
N_{3} & =\left\{  \left(  \alpha,0,y_{1},y_{2}\right)  :\alpha\neq0\wedge
y_{2}\geq0\right\}  ,\\
N_{4} & =\left\{  \left(  0,\beta,y_{1},y_{2}\right)  :\beta\neq0\wedge
y_{1}\geq0\right\}  .
\end{array}
\]
Observe that, given $\left(  \alpha,\beta,y_{1},y_{2}\right)  \in
\operatorname*{cl}N_{2},$ we have%
\[
\left\{
\begin{array}
[c]{lll}%
\alpha\beta<0 & \Longrightarrow & \left(  \alpha,\beta,y_{1},y_{2}\right)  \in
N_{2},\\
\alpha=0=\beta & \Longrightarrow & \left(  \alpha,\beta,y_{1},y_{2}\right)
\in N_{1},\\
\alpha<0=\beta & \Longrightarrow & \left(  \alpha,\beta,y_{1},y_{2}\right)
\in N_{3},\\
\alpha=0>\beta & \Longrightarrow & \left(  \alpha,\beta,y_{1},y_{2}\right)
\in N_{4},
\end{array}
\right.
\]
so that $\operatorname*{cl}N_{2}\subset\operatorname*{epi}{}_{K}F^{\ast}.$
Thus,%
\[
\operatorname*{cl}\operatorname*{epi}{}_{K}F^{\ast}\subset%
{\displaystyle\bigcup\limits_{i=1}^{4}}
\operatorname*{cl}N_{i}\subset N_{1}\cup\operatorname*{epi}{}_{K}F^{\ast}%
\cup\left(  N_{3}\cup N_{1}\right)  \cup\left(  N_{4}\cup N_{1}\right)
=\operatorname*{epi}{}_{K}F^{\ast},
\]
showing that $\operatorname*{epi}{}_{K}F^{\ast}$ is closed. However,
$\operatorname*{epi}{}_{K}F^{\ast}$ is not convex as its image by the
projection mapping $\left(  \alpha,\beta,y_{1},y_{2}\right)  \mapsto\left(
\alpha,\beta\right)  $ is the domain of $F^{\ast},$ $\operatorname{dom}%
F^{\ast}=\{\left(  \alpha,\beta\right)  \in\mathbb{R}^{2}:\ \alpha\beta
\leq0\},$ which is obviously non-convex.
\end{example}

\section{Representing $\boldsymbol{\operatorname*{epi}}$$\boldsymbol{_{K}%
(F+I_{A})^{\ast}}$}

Let $X,$ $Y,$ $Z,$ $F$ and $G$ be as in Section 2. Assume further that $F$ and
$G$ are proper mappings, $K$ is a closed, pointed, convex cone in $Y$ with
nonempty interior, and $S$ is a convex cone in $Z.$ Moreover, $C$ is a
nonempty convex subset of $X$ and $A:=C\cap G^{-1}(-S)$.

The following lemmas are useful for the representation of $\operatorname*{epi}%
{}_{K}(F+I_{A})^{\ast}$ in this section.\medskip

\begin{lemma}
\label{rem1.1} Let $F\colon X\rightarrow Y^{\bullet}$. It holds
\[
(L,y)\in\operatorname*{epi}{}_{K}F^{\ast}\quad\Longleftrightarrow\quad
y-L(x)+F(x)\notin-\operatorname*{int}K,\;\forall x\in X.
\]

\end{lemma}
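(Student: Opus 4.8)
The plan is to peel off the two definitions and reduce the stated equivalence to a property of $\operatorname*{WSup}M$, where $M:=(L-F)(X)\subseteq Y^{\bullet}$. By Definition \ref{def1.2}, $(L,y)\in\operatorname*{epi}{}_{K}F^{\ast}$ means $y\in F^{\ast}(L)+K=\operatorname*{WSup}M+K$, i.e. there exists $\bar v\in\operatorname*{WSup}M$ with $\bar v\leqq_{K}y$. On the other hand, when $x\notin\operatorname{dom}F$ one has $y-L(x)+F(x)=+\infty_{Y}\notin-\operatorname*{int}K$ by \eqref{conv}, so the condition on the right is automatic there, whereas for $x\in\operatorname{dom}F$ it reads $y\not<_{K}(L-F)(x)$; since $F$ is proper we have $M\cap Y=(L-F)(\operatorname{dom}F)\neq\emptyset$ and $+\infty_{Y}\notin M$, so the whole right-hand side becomes $y\notin M-\operatorname*{int}K$. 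Thus it suffices to show that
\[
\text{there exists }\bar v\in\operatorname*{WSup}M\text{ with }\bar v\leqq_{K}y
\qquad\Longleftrightarrow\qquad
y\notin M-\operatorname*{int}K .
\]

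For ``$\Rightarrow$'', first note that $\bar v+K\ni y\in Y$ forces $\bar v\in Y$. Fix $v\in M$; since $\bar v$ is a weakly supremal element of $M$ (Definition \ref{def1}), $\bar v\not<_{K}v$. If $y<_{K}v$ held, then from $v-y\in\operatorname*{int}K$ and $y-\bar v\in K$ we would get $v-\bar v=(v-y)+(y-\bar v)\in\operatorname*{int}K+K\subseteq\operatorname*{int}K$, i.e. $\bar v<_{K}v$, a contradiction. Hence $y\not<_{K}v$, which gives $y\notin M-\operatorname*{int}K$.

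For ``$\Leftarrow$'', assume $y\notin M-\operatorname*{int}K$. Replacing $M$ by $M\cap Y$ — which, since adjoining or deleting $-\infty_{Y}$ affects neither the weakly supremal elements (Definition \ref{def1}) nor the relation $y\notin M-\operatorname*{int}K$, costs nothing — we may assume $\emptyset\neq M\subseteq Y$; moreover $\operatorname*{WSup}M\neq\{+\infty_{Y}\}$, for otherwise \eqref{wsup=infty} would produce $v\in M$ with $y<_{K}v$. By \eqref{formularwsup}, $\operatorname*{WSup}M=\operatorname*{cl}(M-\operatorname*{int}K)\setminus(M-\operatorname*{int}K)$, so it is enough to find a point of this set that is $\leqq_{K}y$. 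Fix $d\in\operatorname*{int}K$ and $v_{0}\in M$, and set $T:=\{t\geq0:\ y-td\in M-\operatorname*{int}K\}$. Then $T$ is open (because $M-\operatorname*{int}K$ is open and $t\mapsto y-td$ is continuous), $0\notin T$ (by hypothesis), $T$ is upward closed in $[0,+\infty)$ (because $\operatorname*{int}K+\operatorname*{int}K\subseteq\operatorname*{int}K$), and $T\neq\emptyset$ (because $\tfrac1t(v_{0}-y)+d\to d\in\operatorname*{int}K$, so $y-td\in v_{0}-\operatorname*{int}K\subseteq M-\operatorname*{int}K$ for all large $t$). Hence $T=(t^{\ast},+\infty)$ for some $0\le t^{\ast}<+\infty$, and the point $\bar v:=y-t^{\ast}d$ satisfies $\bar v\leqq_{K}y$ (as $t^{\ast}d\in K$), $\bar v\notin M-\operatorname*{int}K$ (as $t^{\ast}\notin T$), and $\bar v\in\operatorname*{cl}(M-\operatorname*{int}K)$ (as $\bar v=\lim_{t\downarrow t^{\ast}}(y-td)$ with $y-td\in M-\operatorname*{int}K$ for $t>t^{\ast}$). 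Therefore $\bar v\in\operatorname*{WSup}M=F^{\ast}(L)$ and $y\in\bar v+K\subseteq F^{\ast}(L)+K$, as required.

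The only genuinely delicate step is ``$\Leftarrow$'': one cannot simply descend from $y$ far into $-\operatorname*{int}K$, because that overshoots the open set $M-\operatorname*{int}K$ and misses $\operatorname*{WSup}M$ entirely; the right choice is the first-contact point $y-t^{\ast}d$ of the ray $t\mapsto y-td$ with $\operatorname*{cl}(M-\operatorname*{int}K)$. Everything else is routine cone arithmetic together with the bookkeeping for the adjoined elements $\pm\infty_{Y}$.
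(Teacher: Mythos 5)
Your argument is correct, but it proceeds quite differently from the paper: the paper's entire proof is a one-line appeal to an external result (Theorem 3.1 of \cite{DGLM16}, applied with $f=F$, $g\equiv 0_{Z}$, $C=X$), whereas you give a self-contained derivation straight from Definitions \ref{def1} and \ref{def1.2}. Your reduction of the epigraph membership to the statement ``$\exists\,\bar v\in\operatorname*{WSup}M$ with $\bar v\leqq_{K}y$ $\Longleftrightarrow$ $y\notin M-\operatorname*{int}K$'' for $M=(L-F)(X)$ is sound (the bookkeeping for $\pm\infty_{Y}$ is handled correctly, since removing $-\infty_{Y}$ changes neither $\operatorname*{WSup}M$ nor $M-\operatorname*{int}K$), the forward implication is routine cone arithmetic, and the first-contact construction $\bar v=y-t^{\ast}d$ along the ray $t\mapsto y-td$, combined with \eqref{wsup=infty} and \eqref{formularwsup}, correctly produces an element of $\operatorname*{cl}(M-\operatorname*{int}K)\setminus(M-\operatorname*{int}K)=\operatorname*{WSup}M$ below $y$; this is essentially a re-proof of the cited result of \cite{DGLM16} in the special case needed here. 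Two remarks on scope: you invoke properness of $F$, which the lemma's literal statement ``$F\colon X\to Y^{\bullet}$'' does not include, but this is harmless and in fact necessary — for $F\equiv+\infty_{Y}$ one has $F^{\ast}(L)=\{-\infty_{Y}\}$ and the stated equivalence fails — and it matches both the blanket assumptions of Section 3 and every use of the lemma in the paper (always applied to proper maps such as $F+I_{A}$ or $F+I_{C}+T\circ G$). What your route buys is independence from the companion paper at the cost of length; what the paper's route buys is brevity, deferring all the geometry of $\operatorname*{WSup}$ to \cite{DGLM16}.
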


\begin{proof}
It is a direct consequence of \cite[Theorem 3.1]{DGLM16} with $f=F$,
$g\equiv0_{Z}$, and $C=X$.\medskip\ 
\end{proof}

The main results in this section are two extensions of the following
characterization of the epigraph of $(f+i_{A})^{\ast}$ for a given scalar
function $f$ (recall that $\mathcal{L}_{+}(S,\mathbb{R}_{+})$ and
$\mathcal{L}_{+}^{w}(S,\mathbb{R}_{+})$ are alternative generalizations of the
dual cone $S^{+}$ to the vector setting).\medskip

\begin{lemma}
\cite[Theorem 8.2]{Bot09} \label{theo1} Let $C$ be a nonempty closed convex
subset of $X$, $f\colon X\rightarrow\mathbb{R}\cup\{+\infty\}$ be a proper
lower semicontinuous (lsc) convex function, and $G\colon X\rightarrow
Z^{\bullet}$ be a proper $S$-convex and $S$-epi closed mapping. Assume that
$A\cap\operatorname{dom}f\neq\emptyset.$ Then,
\[
\operatorname*{epi}(f+i_{A})^{\ast}=\operatorname*{cl}\left[  \bigcup
_{z^{\ast}\in S^{+}}\operatorname*{epi}(f+i_{C}+z^{\ast}\circ G)^{\ast
}\right]  .
\]

\end{lemma}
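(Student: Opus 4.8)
The plan is to prove the two inclusions separately, using Fenchel--Moreau type duality for the scalar function $f+i_A$ together with the assumed $S$-convexity and $S$-epi closedness of $G$. For the easy inclusion ``$\supseteq$'', note first that for every $z^{\ast}\in S^{+}$ and every $x\in A$ one has $G(x)\in -S$, hence $(z^{\ast}\circ G)(x)=\langle z^{\ast},G(x)\rangle\le 0$; consequently $f+i_C+z^{\ast}\circ G\le f+i_A$ pointwise on $X$, which gives $\operatorname*{epi}(f+i_A)^{\ast}\supseteq\operatorname*{epi}(f+i_C+z^{\ast}\circ G)^{\ast}$ for each such $z^{\ast}$. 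Taking the union over $z^{\ast}\in S^{+}$ and then the closure, and using that $\operatorname*{epi}(f+i_A)^{\ast}$ is always weak$^{\ast}$-closed (being an epigraph of a conjugate function, hence of a lsc convex function on $X^{\ast}$), we obtain $\operatorname*{epi}(f+i_A)^{\ast}\supseteq\operatorname*{cl}\big[\bigcup_{z^{\ast}\in S^{+}}\operatorname*{epi}(f+i_C+z^{\ast}\circ G)^{\ast}\big]$.

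For the nontrivial inclusion ``$\subseteq$'', I would fix $(x^{\ast},r)\in\operatorname*{epi}(f+i_A)^{\ast}$, so that $\langle x^{\ast},x\rangle-r\le f(x)$ for all $x\in A=C\cap G^{-1}(-S)$, i.e. the inequality $\langle x^{\ast},\cdot\rangle-r\le f+i_C$ holds on the set $\{x: G(x)\in -S\}$. The idea is to reformulate this as a system with a vector constraint $G(x)\in -S$ and then apply the vector (convex) Farkas lemma / sublevel-set duality. Concretely, set $h:=f+i_C-\langle x^{\ast},\cdot\rangle+r$; then $h$ is proper lsc convex (by properness of $f$, closedness of $C$, and $A\cap\operatorname{dom}f\ne\emptyset$ ensuring $\operatorname{dom} h\ne\emptyset$), and $h(x)\ge 0$ whenever $G(x)\in -S$. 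By the convex Farkas lemma for cone-constrained systems in the form given in \cite[Theorem 8.2]{Bot09}'s proof (or its epigraph version), the implication ``$G(x)\leqq_S 0_Z\ \Rightarrow\ h(x)\ge 0$'' is equivalent to
\[
(0_{X^{\ast}},0)\in\operatorname*{cl}\Big[\operatorname*{epi}h^{\ast}+\bigcup_{z^{\ast}\in S^{+}}\operatorname*{epi}(z^{\ast}\circ G)^{\ast}\Big],
\]
where the $S$-epi closedness of $G$ is exactly what is needed so that the set whose closure appears is the right one (it controls $\operatorname*{epi}(z^{\ast}\circ G)^{\ast}$ via $(z^{\ast}\circ G)^{\ast}=\sigma$ of the epigraph of $G$). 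Translating back, $(0,0)\in\operatorname*{cl}\bigcup_{z^{\ast}\in S^{+}}\operatorname*{epi}(f+i_C-\langle x^{\ast},\cdot\rangle+r+z^{\ast}\circ G)^{\ast}$, and a shift by $(x^{\ast},-r)$ (using $\operatorname*{epi}(g-\langle x^{\ast},\cdot\rangle+r)^{\ast}=\operatorname*{epi}g^{\ast}-(x^{\ast},-r)$, wait, more precisely $\operatorname*{epi}(g+\langle a,\cdot\rangle+c)^\ast = \operatorname*{epi} g^\ast + (-a,c)$) yields $(x^{\ast},r)\in\operatorname*{cl}\bigcup_{z^{\ast}\in S^{+}}\operatorname*{epi}(f+i_C+z^{\ast}\circ G)^{\ast}$, as desired.

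The main obstacle will be making the middle step rigorous: one must justify that, for $S$-convex and $S$-epi closed $G$ and $S$ merely a closed convex cone (not necessarily with nonempty interior), the set $\bigcup_{z^{\ast}\in S^{+}}\operatorname*{epi}(z^{\ast}\circ G)^{\ast}$ together with $\operatorname*{epi}h^{\ast}$ correctly describes $\operatorname*{epi}(h+i_{G^{-1}(-S)})^{\ast}$ after closure --- equivalently, that $\operatorname*{cl}\operatorname*{epi}(i_{G^{-1}(-S)})^{\ast}=\operatorname*{cl}\bigcup_{z^{\ast}\in S^{+}}\operatorname*{epi}(z^{\ast}\circ G)^{\ast}$. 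This is precisely where $S$-epi closedness of $G$ enters, since $\operatorname*{epi}(z^{\ast}\circ G)^{\ast}$ is the support function of $\operatorname*{epi}_S G$ paired against $(x,1)$-type elements and one needs $\operatorname*{epi}_S G$ closed to recover $G^{-1}(-S)$ correctly; here I would simply invoke \cite[Theorem 8.2]{Bot09} itself, since the statement we are asked to prove \emph{is} that theorem, so the proof reduces to quoting it, and the point of reproducing the argument is only to record the two-inclusion structure above for the reader's convenience before its vector extensions in the next section.
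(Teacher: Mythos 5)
The paper offers no proof of this lemma at all: it is imported verbatim as \cite[Theorem 8.2]{Bot09}, exactly as your final paragraph concedes, so your ultimate deferral to that reference coincides with the paper's treatment. Your sketch of the inclusion $\supseteq$ (pointwise inequality $f+i_C+z^{\ast}\circ G\le f+i_A$ plus weak$^{\ast}$-closedness of $\operatorname*{epi}(f+i_A)^{\ast}$) is correct, and the imprecision in the middle portion (the sum-of-epigraphs Farkas reformulation and the garbled shift identity) is immaterial only because you end by quoting the cited theorem rather than relying on those steps.
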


\begin{lemma}
\label{lem1} Let $F\colon X\rightarrow Y^{\bullet}$ be a proper $K$-convex
mapping and $C$ be a convex subset of $X$ such that $C\cap\operatorname{dom}%
F\neq\emptyset$. Then $F(C\cap\operatorname{dom}F)+\operatorname*{int}K$ is a
convex subset of $Y$.
\end{lemma}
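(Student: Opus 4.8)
\textbf{Proof proposal for Lemma \ref{lem1}.}
The plan is to show directly that the set $D:=F(C\cap\operatorname{dom}F)+\operatorname*{int}K$ is convex by verifying that for any two points of $D$ and any $\lambda\in(0,1)$ the corresponding convex combination again lies in $D$. So I would start by picking $y_{1},y_{2}\in D$ and writing $y_{i}=F(x_{i})+k_{i}$ with $x_{i}\in C\cap\operatorname{dom}F$ and $k_{i}\in\operatorname*{int}K$ for $i=1,2$; note that since $F$ is proper and these $x_i$ lie in $\operatorname{dom}F$, the values $F(x_i)$ are genuine elements of $Y$ (not $\pm\infty_Y$), so the algebra makes sense in the vector space $Y$.

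The key step is to exploit $K$-convexity of $F$. Fix $\lambda\in(0,1)$ and set $\bar x:=\lambda x_{1}+(1-\lambda)x_{2}$; since $C$ is convex, $\bar x\in C$. Because $\operatorname*{epi}{}_{K}F$ is convex and contains $(x_{i},F(x_{i}))$ for $i=1,2$, it contains $(\bar x,\lambda F(x_{1})+(1-\lambda)F(x_{2}))$, which means $\lambda F(x_{1})+(1-\lambda)F(x_{2})\in F(\bar x)+K$; in particular $\bar x\in\operatorname{dom}F$, hence $\bar x\in C\cap\operatorname{dom}F$, and we may write
\[
\lambda F(x_{1})+(1-\lambda)F(x_{2})=F(\bar x)+k_{0}\quad\text{for some }k_{0}\in K.
\]
Now form the convex combination of $y_1,y_2$:
\[
\lambda y_{1}+(1-\lambda)y_{2}=\bigl[\lambda F(x_{1})+(1-\lambda)F(x_{2})\bigr]+\bigl[\lambda k_{1}+(1-\lambda)k_{2}\bigr]=F(\bar x)+k_{0}+\lambda k_{1}+(1-\lambda)k_{2}.
\]
It remains to check that $k_{0}+\lambda k_{1}+(1-\lambda)k_{2}\in\operatorname*{int}K$. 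Since $K$ is a convex cone, $\lambda k_{1}+(1-\lambda)k_{2}\in\operatorname*{int}K$ (a convex combination of two interior points of the convex set $K$ is interior, or alternatively $\lambda k_1\in\operatorname*{int}K$ already because $\operatorname*{int}K$ is a cone and adding $(1-\lambda)k_2\in K$ keeps it in $\operatorname*{int}K$, using $\operatorname*{int}K+K\subset\operatorname*{int}K$). Then adding $k_{0}\in K$ and again invoking $\operatorname*{int}K+K\subset\operatorname*{int}K$ gives $k_{0}+\lambda k_{1}+(1-\lambda)k_{2}\in\operatorname*{int}K$. Hence $\lambda y_{1}+(1-\lambda)y_{2}\in F(\bar x)+\operatorname*{int}K\subset D$, proving convexity.

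I do not anticipate a serious obstacle here: the only mild point to get right is the bookkeeping with the enlarged space $Y^{\bullet}$ — one must use properness of $F$ and the restriction to $\operatorname{dom}F$ to be sure all the $F$-values involved are honest vectors in $Y$, so that the cone arithmetic (and the fact that $K$-convexity forces $\bar x\in\operatorname{dom}F$) is legitimate. The auxiliary fact $\operatorname*{int}K+K\subset\operatorname*{int}K$, valid for any convex set with nonempty interior, is what does the real work and should be stated explicitly.
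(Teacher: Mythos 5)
Your proof is correct and follows essentially the same route as the paper: write each point as $F(x_i)$ plus an interior point, use convexity of $\operatorname*{epi}{}_{K}F$ to compare the convex combination of values with $F(\lambda x_1+(1-\lambda)x_2)$, and absorb the cone terms via $K+\operatorname*{int}K=\operatorname*{int}K$. The only cosmetic difference is that you deduce $\bar x\in\operatorname{dom}F$ directly from the epigraph membership, whereas the paper invokes convexity of $C\cap\operatorname{dom}F$; both are fine.
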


\begin{proof}
Let arbitrary $y_{1},y_{2}\in F(C\cap\operatorname{dom}f)+\operatorname*{int}%
K$ and $\lambda\in\left]  0,1\right[  ,$ we will prove that $\lambda
y_{1}+(1-\lambda)y_{2}\in F(C\cap\operatorname{dom}f)+\operatorname*{int}K$.

Since $y_{1},y_{2}\in F(C\cap\operatorname{dom}f)+\operatorname*{int}K$, there
exists $x_{1},x_{2}\in C\cap\operatorname{dom}F$ such that $y_{1}\in
F(x_{1})+\operatorname*{int}K $, $y_{2}\in F(x_{2})+\operatorname*{int}K$ and
consequently,
\begin{equation}
\lambda y_{1}+(1-\lambda)y_{2}\in\lambda F(x_{1})+(1-\lambda)F(x_{2}%
)+\operatorname*{int}K.\label{0.2}%
\end{equation}

Now, because $(x_{1},F(x_{1})),(x_{2},F(x_{2}))\in\operatorname*{epi}{}_{K}F$
and $F$ is a $K$-convex mapping, one has $\lambda(x_{1},F(x_{1}))+(1-\lambda
)(x_{2},F(x_{2}))\in\operatorname*{epi}{}_{K}F$, which means
\begin{equation}
\lambda F(x_{1})+(1-\lambda)F(x_{2})\in F(\lambda x_{1}+(1-\lambda
)x_{2})+K.\label{0.3}%
\end{equation}
It follows from \eqref{0.2}, \eqref{0.3} and the equality
\begin{equation}
K+\operatorname*{int}K=\operatorname*{int}K\label{marco1}%
\end{equation}
(see e.g. \cite[(7)]{DGLM16}) that
\[
\lambda y_{1}+(1-\lambda)y_{2}\in F(\lambda x_{1}+(1-\lambda)x_{2}%
)+\operatorname*{int}K,
\]
and we are done (note that $\lambda x_{1}+(1-\lambda)x_{2}\in C\cap
\operatorname{dom}F$ since $C\cap\operatorname{dom}f$ is convex). \medskip
\end{proof}

The next lemma is proved in \cite[Lemma 1.3]{Bre11} under the assumption that
$Y$ is a normed space. \medskip

\begin{lemma}
\label{lem3} Let $M\subset Y$ be a nonempty open convex set and let $\bar
{y}\in Y$ with $\bar{y}\notin M$. Then there exists $y^{\ast}\in Y^{\ast}$
such that
\[
y^{\ast}(u)<y^{\ast}(\bar{y}),\quad\forall u\in M.
\]

\end{lemma}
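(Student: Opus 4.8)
The plan is to prove the separation statement of Lemma~\ref{lem3} directly from the Hahn--Banach theorem, adapting the normed-space argument of \cite[Lemma 1.3]{Bre11} to a general locally convex Hausdorff space. The key point is that separation of a point from an \emph{open} convex set does not require any completeness or norm structure; the only ingredient needed is the geometric Hahn--Banach theorem (separation of a point from a convex set with nonempty interior), which is available in any topological vector space.

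First I would reduce to the case $0_Y\in M$ by translating: replace $M$ by $M-u_0$ and $\bar y$ by $\bar y-u_0$ for some fixed $u_0\in M$, which changes neither openness, convexity, nor the conclusion (a functional $y^\ast$ separating the translated data separates the original data by the same inequality after adding back $y^\ast(u_0)$). So assume $0_Y\in M$. Then the Minkowski gauge (functional) $p_M(y):=\inf\{t>0: y\in tM\}$ is well defined and finite on all of $Y$ (because $M$ is absorbing, being an open neighborhood of $0_Y$), it is positively homogeneous and subadditive (because $M$ is convex), hence a sublinear functional on $Y$. Moreover, since $M$ is open, one has the standard identity $M=\{y\in Y: p_M(y)<1\}$; in particular $\bar y\notin M$ forces $p_M(\bar y)\geq 1$.

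Next I would apply the analytic Hahn--Banach theorem. On the one-dimensional subspace $\mathbb{R}\bar y$ define the linear functional $\ell(t\bar y):=t\,p_M(\bar y)$; for $t\geq 0$ one has $\ell(t\bar y)=t\,p_M(\bar y)=p_M(t\bar y)$, and for $t<0$ one has $\ell(t\bar y)=t\,p_M(\bar y)\leq 0\leq p_M(t\bar y)$, so $\ell\leq p_M$ on $\mathbb{R}\bar y$. By Hahn--Banach there is a linear functional $y^\ast$ on $Y$ extending $\ell$ with $y^\ast\leq p_M$ on all of $Y$. Then for every $u\in M$ we get $y^\ast(u)\leq p_M(u)<1\leq p_M(\bar y)=y^\ast(\bar y)$, which is exactly the desired strict inequality $y^\ast(u)<y^\ast(\bar y)$ for all $u\in M$.

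Finally I would check continuity of $y^\ast$, so that $y^\ast\in Y^\ast$ as required. Since $M$ is an open neighborhood of $0_Y$ and $y^\ast<1$ on $M$, we also have $y^\ast>-1$ on the open neighborhood $-M$ of $0_Y$ (using $y^\ast(-u)=-y^\ast(u)$ and $-M$ open), hence $|y^\ast|<1$ on the open neighborhood $M\cap(-M)$ of $0_Y$; a linear functional bounded on a neighborhood of the origin is continuous, so $y^\ast\in Y^\ast$. I do not expect a serious obstacle here: the only delicate point is recording carefully that $M$ open and $0_Y\in M$ give both that $p_M$ is finite-valued (absorbency) and that $p_M(\bar y)\geq1$ (the open description of $M$ by the gauge), but these are the standard properties of the Minkowski functional of an open convex neighborhood of the origin in a topological vector space.
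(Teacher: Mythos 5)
Your argument is correct, but it takes a more self-contained route than the paper: the paper's entire proof is a one-line appeal to Theorem 3.4 of \cite{Rudin91} (the separation theorem for a point and an open convex set in a topological vector space), whereas you reprove that separation result from first principles via the Minkowski gauge and the analytic Hahn--Banach theorem. Your chain of steps is sound: after translating so that $0_{Y}\in M$ (note $\bar{y}\neq 0_{Y}$ then, so the one-dimensional subspace $\mathbb{R}\bar{y}$ and the functional $\ell$ are well defined), the gauge $p_{M}$ is a finite sublinear functional with $M=\{y:p_{M}(y)<1\}$ by openness, the extension $y^{\ast}\leq p_{M}$ gives $y^{\ast}(u)\leq p_{M}(u)<1\leq p_{M}(\bar{y})=y^{\ast}(\bar{y})$ for $u\in M$, and boundedness of $y^{\ast}$ on the neighborhood $M\cap(-M)$ of the origin yields continuity, so $y^{\ast}\in Y^{\ast}$. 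What the paper's citation buys is brevity; what your argument buys is transparency about the hypotheses -- it makes explicit that no norm or completeness is used (only that $Y$ is a real topological vector space and $M$ is open and convex), which is exactly the point the authors are making when they note that the normed-space statement of \cite[Lemma 1.3]{Bre11} extends to their setting. In effect you have written out the standard proof of the theorem the paper cites, so the two proofs rest on the same mathematics.
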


\begin{proof}
It is a straightforward consequence of Theorem 3.4 in \cite{Rudin91}. \medskip
\end{proof}

\begin{lemma}
\label{lem2} Let $\bar{y}\in Y$, $y^{\ast}\in Y^{\ast}$ and $\emptyset\neq
M\subset Y,$ and assume that
\begin{equation}
y^{\ast}(u)<y^{\ast}(\bar{y}),\quad\forall u\in M-\operatorname*{int}%
K.\label{0.1}%
\end{equation}
Then, the following statements hold:\newline$(i)$\textrm{\ }$y^{\ast}(v)\leq
y^{\ast}(\bar{y}),\;\forall v\in M$;\textrm{\newline}$(ii)$\textrm{\ }%
$y^{\ast}(k)>0$ for all $k\in\operatorname*{int}K$ and, consequently,
$y^{\ast}\in K^{+}$.\textrm{\ }
\end{lemma}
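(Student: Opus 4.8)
The plan is to exploit the hypothesis \eqref{0.1} directly, first for statement $(i)$ and then leverage $(i)$ together with the cone structure of $\operatorname*{int}K$ for statement $(ii)$.

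For $(i)$, fix $v\in M$ and note that $v-k\in M-\operatorname*{int}K$ for every $k\in\operatorname*{int}K$, so \eqref{0.1} gives $y^{\ast}(v-k)<y^{\ast}(\bar{y})$, i.e.\ $y^{\ast}(v)<y^{\ast}(\bar{y})+y^{\ast}(k)$ for all $k\in\operatorname*{int}K$. Now pick any fixed $k_{0}\in\operatorname*{int}K$ (nonempty by assumption) and replace $k_{0}$ by $\lambda k_{0}$ with $\lambda\downarrow 0$; since $\operatorname*{int}K$ is a cone, $\lambda k_0\in\operatorname*{int}K$ for all $\lambda>0$, and letting $\lambda\to 0^{+}$ yields $y^{\ast}(v)\leq y^{\ast}(\bar{y})$. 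This proves $(i)$.

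For $(ii)$, take $k\in\operatorname*{int}K$ and any $v_{0}\in M$ (here is where we use $M\neq\emptyset$). For every $\lambda>0$ we have $v_{0}-\lambda k\in M-\operatorname*{int}K$, so \eqref{0.1} gives $y^{\ast}(v_{0})-\lambda y^{\ast}(k)<y^{\ast}(\bar{y})$, hence $\lambda y^{\ast}(k)>y^{\ast}(v_{0})-y^{\ast}(\bar{y})$ for all $\lambda>0$. If $y^{\ast}(k)\le 0$ the left-hand side would be bounded above (by $0$) as $\lambda\to+\infty$, contradicting that it must exceed the fixed constant $y^{\ast}(v_{0})-y^{\ast}(\bar{y})$ for arbitrarily large $\lambda$; therefore $y^{\ast}(k)>0$. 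Since this holds for every $k\in\operatorname*{int}K$ and $K=\operatorname*{cl}(\operatorname*{int}K)$ (as $K$ is a closed convex cone with nonempty interior), continuity of $y^{\ast}$ gives $y^{\ast}(k)\ge 0$ for all $k\in K$, i.e.\ $y^{\ast}\in K^{+}$.

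There is no serious obstacle here; the only point requiring a little care is the passage from strict inequalities on $\operatorname*{int}K$ to the closed statements (the $\le$ in $(i)$ and the membership in $K^{+}$ in $(ii)$), which is handled by the scaling/limit arguments above using that $\operatorname*{int}K$ is a cone and that $K$ is the closure of its interior. Note also that statement $(ii)$ in fact does not need $(i)$, but the two are naturally proved in tandem from \eqref{0.1}.
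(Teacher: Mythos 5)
Your argument for $(i)$ is correct and is essentially the paper's own proof (shift $v$ by $\lambda k_{0}$, let $\lambda\to 0^{+}$). The problem is in $(ii)$. From $v_{0}-\lambda k\in M-\operatorname*{int}K$ you correctly obtain $\lambda\,y^{\ast}(k)>y^{\ast}(v_{0})-y^{\ast}(\bar{y})$ for all $\lambda>0$, and letting $\lambda\to+\infty$ this rules out $y^{\ast}(k)<0$. But it does \emph{not} rule out $y^{\ast}(k)=0$: by part $(i)$ the "fixed constant" $y^{\ast}(v_{0})-y^{\ast}(\bar{y})$ is $\leq 0$, and whenever it is strictly negative the inequality $0>y^{\ast}(v_{0})-y^{\ast}(\bar{y})$ holds for every $\lambda$, so there is no contradiction. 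Hence your argument only yields $y^{\ast}(k)\geq 0$ on $\operatorname*{int}K$, not the strict inequality claimed in $(ii)$ — and the strict inequality is exactly what the paper needs later (in Theorems \ref{theo2} and \ref{theovector2} one divides by $y^{\ast}(k_{0})$), so the gap is not cosmetic.

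The paper closes this by a different device: for an arbitrary $k\in\operatorname*{int}K$ it produces $\lambda>0$ with $\bar{y}-\lambda k\in M-\operatorname*{int}K$ (choosing $m_{0}\in M$, $k_{0}\in K$ and using continuity of $t\mapsto(m_{0}-k_{0}-\bar{y})t+k$ at $t=0$ together with $K+\operatorname*{int}K=\operatorname*{int}K$); then \eqref{0.1} gives $y^{\ast}(\bar{y})-\lambda y^{\ast}(k)<y^{\ast}(\bar{y})$, the constant cancels, and $y^{\ast}(k)>0$ follows at once. Alternatively, you can patch your own route: your estimate gives $y^{\ast}\geq 0$ on $\operatorname*{int}K$, hence on $K=\operatorname*{cl}(\operatorname*{int}K)$; moreover $y^{\ast}\neq 0$, since $M-\operatorname*{int}K\neq\emptyset$ and $y^{\ast}=0$ would violate the strict inequality \eqref{0.1}; finally, a nonzero functional that is nonnegative on $K$ is strictly positive on $\operatorname*{int}K$ (if $y^{\ast}(k)=0$ and $k+V\subset K$ for a balanced neighborhood $V$ of $0_{Y}$, then $y^{\ast}(k)\pm y^{\ast}(v)\geq 0$ for $v\in V$ forces $y^{\ast}\equiv 0$ on $V$, i.e.\ $y^{\ast}=0$). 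Either of these extra steps is needed; as written, the case $y^{\ast}(k)=0$ is simply not addressed.
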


\begin{proof}
$(i)$ Take $k_{0}\in\operatorname*{int}K$. Then, for any $v\in M$, it follows
from \eqref{0.1} that
\[
y^{\ast}(v-\lambda k_{0})<y^{\ast}(\bar{y}),\quad\forall\lambda>0,
\]
and by letting $\lambda\rightarrow0$, we get $y^{\ast}(v)\leq y^{\ast}(\bar
{y}).$

$(ii)$ Take arbitrarily $k\in\operatorname*{int}K$. We firstly show that there
exists $\lambda>0$ such that $\bar{y}-\lambda k\in M-\operatorname*{int}K$.
Indeed, take $m_{0}\in M$ and $k_{0}\in K$. Because of the continuity of the
mapping $t\mapsto(m_{0}-k_{0}-\bar{y})t+k$ at $t=0$, there is a $\epsilon>0 $
such that $(m_{0}-k_{0}-\bar{y})\epsilon+k\in\operatorname*{int}K$. Taking
$\lambda=\frac{1}{\epsilon},$ we obtain $m_{0}-k_{0}-\bar{y}+\lambda
k\in\lambda\operatorname*{int}K$, and consequently, applying (\ref{marco1}),
\begin{align*}
\bar{y}-\lambda k  & \in m_{0}-k_{0}-\lambda\operatorname*{int}K\subset
M-K-\operatorname*{int}K\\
& =M-\operatorname*{int}K.
\end{align*}
It now follows from \eqref{0.1} that $y^{\ast}(\bar{y}-\lambda k)<y^{\ast
}(\bar{y})$, which yields $y^{\ast}(k)>0$. Since $K=\operatorname*{cl}%
(\operatorname*{int}K)$, $y^{\ast}(k)\geq0$ for all $k\in K$ which means that
$y^{\ast}\in K^{+}$ and the proof is complete. \medskip
\end{proof}

\begin{lemma}
\label{lem4} If $F\colon X\rightarrow Y\cup\{+\infty_{Y}\}$ is a proper
mapping, then $\operatorname*{epi}{}_{K}F^{\ast}$ is a closed subset of
$\mathcal{L}(X,Y)\times Y.$
\end{lemma}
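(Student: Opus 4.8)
The plan is to characterize membership in $\operatorname*{epi}_K F^\ast$ by a family of closed conditions and then show the complement is open. By Lemma \ref{rem1.1}, for $(L,y)\in\mathcal{L}(X,Y)\times Y$ we have $(L,y)\in\operatorname*{epi}_K F^\ast$ if and only if $y-L(x)+F(x)\notin-\operatorname*{int}K$ for every $x\in X$; equivalently, writing $F(x)\in Y$ for $x\in\operatorname{dom}F$ (the only $x$ that matter, since $F(x)=+\infty_Y$ forces $y-L(x)+F(x)=+\infty_Y\notin-\operatorname*{int}K$ automatically), the condition reads: for all $x\in\operatorname{dom}F$, $L(x)-F(x)-y\notin\operatorname*{int}K$. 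First I would fix $x\in\operatorname{dom}F$ and consider the set $\Delta_x:=\{(L,y):L(x)-F(x)-y\in\operatorname*{int}K\}$. The map $(L,y)\mapsto L(x)-F(x)-y$ from $\mathcal{L}(X,Y)\times Y$ to $Y$ is continuous — in the weak (pointwise-convergence) topology on $\mathcal{L}(X,Y)$, the evaluation $L\mapsto L(x)$ at the fixed point $x$ is continuous into $Y$, and subtraction and the fixed shift by $F(x)$ are continuous — so $\Delta_x$ is the preimage of the open set $\operatorname*{int}K$ and hence open.

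Next I would assemble these: the complement of $\operatorname*{epi}_K F^\ast$ in $\mathcal{L}(X,Y)\times Y$ equals $\bigcup_{x\in X}\Delta_x=\bigcup_{x\in\operatorname{dom}F}\Delta_x$ — indeed $(L,y)\notin\operatorname*{epi}_K F^\ast$ means, by Lemma \ref{rem1.1}, that $y-L(x)+F(x)\in-\operatorname*{int}K$ for some $x$, i.e. $(L,y)\in\Delta_x$ for that $x$ (which necessarily lies in $\operatorname{dom}F$). A union of open sets is open, so the complement is open and therefore $\operatorname*{epi}_K F^\ast$ is closed. Here the properness of $F$ enters to guarantee $\operatorname{dom}F\neq\emptyset$ and that $F(x)\in Y$ (never $-\infty_Y$) for $x\in\operatorname{dom}F$, so the algebra $L(x)-F(x)-y$ is well-defined and the conventions in \eqref{conv} handle the $x\notin\operatorname{dom}F$ case as noted.

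The only mild subtlety — and the one step worth spelling out carefully — is the continuity of $(L,y)\mapsto L(x)-F(x)-y$ on the product $\mathcal{L}(X,Y)\times Y$: one must use precisely the weak topology on $\mathcal{L}(X,Y)$ defined just before Definition \ref{def1.2}, under which a net $L_i\to L$ means $L_i(x)\to L(x)$ in $Y$ for every $x$, so in particular the evaluation at our fixed $x$ is continuous; combined with joint continuity of vector subtraction in the topological vector space $Y$, the composite map is continuous. Everything else is a routine unwinding of Lemma \ref{rem1.1} and the elementary fact that an arbitrary union of open sets is open. No limiting or convexity arguments are needed, which is why the hypothesis asks only that $F$ be proper (not $K$-convex or $K$-epi closed).
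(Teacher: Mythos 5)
Your proof is correct and is essentially the paper's argument in dual form: the paper takes a net in $\operatorname*{epi}{}_{K}F^{\ast}$ converging to a point outside it and derives a contradiction from Lemma \ref{rem1.1} together with the openness of $-\operatorname*{int}K$ and continuity of the evaluation $(L,y)\mapsto y-L(\bar{x})+F(\bar{x})$, while you phrase the very same ingredients as ``the complement is a union of open preimages $\Delta_{x}$.'' No gap; the two write-ups are interchangeable.
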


\begin{proof}
Let $\{(L_{i},y_{i})\}_{i\in I}\subset\operatorname*{epi}{}_{K}F^{\ast}$ be a
net such that $(L_{i},y_{i})\rightarrow(L,y)$. We will show that
$(L,y)\in\operatorname*{epi}{}_{K}F^{\ast}$. Let us suppose the contrary, that
is $(L,y)\notin\operatorname*{epi}{}_{K}F^{\ast}$. Then, by Lemma
\ref{rem1.1}, there exists $\bar{x}\in\operatorname{dom}F$ such that
\[
y-L(\bar{x})+F(\bar{x})\in-\operatorname*{int}K.
\]
As $y_{i}-L_{i}(\bar{x})+F(\bar{x})\rightarrow y-L(\bar{x})+F(\bar{x})$, there
is a $i_{0}\in I$ such that for all $i\in I$, $i\succsim i_{0}$, where
$\succsim$ is the net order,%
\[
y_{i}-L_{i}(\bar{x})+F(\bar{x})\in-\operatorname*{int}K,
\]
which again by Lemma \ref{rem1.1}, yields $(L_{i},y_{i})\not \in
\operatorname*{epi}{}_{K}F^{\ast}$ for all $i\succsim i_{0}$, and this is a
contradiction. \medskip
\end{proof}

\begin{theorem}
[1st asymptotic representation of $\operatorname*{epi}{}_{K}(F+I_{A})^{\ast}$%
]\label{theo2} Let $C$ be a nonempty closed convex subset of $X$, $F\colon
X\rightarrow Y\cup\{+\infty_{Y}\}$ be a proper $K$-convex mapping such that
$y^{\ast}\circ F$ is lsc for all $y^{\ast}\in Y^{\ast}$, and $G\colon
X\rightarrow Z\cup\{+\infty_{Z}\}$ be a proper $S$-convex and $S$-epi closed
mapping. Assume that $A\cap\operatorname{dom}F\neq\emptyset$. Then
\begin{equation}
\operatorname*{epi}{}_{K}(F+I_{A})^{\ast}=\operatorname*{cl}\left[
\bigcup_{T\in\mathcal{L}_{+}(S,K)}\operatorname*{epi}{}_{K}(F+I_{C}+T\circ
G)^{\ast}\right]  .\label{eqthm21a}%
\end{equation}

\end{theorem}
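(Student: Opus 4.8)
The plan is to reduce the vector statement to the scalar one (Lemma~\ref{theo1}) by scalarizing with functionals $y^\ast \in K^+$, and then to re-assemble the $K$-epigraph of the vector conjugate from the scalar epigraphs. The two inclusions will be treated separately.

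\emph{The easy inclusion} ($\supseteq$). By Lemma~\ref{lem4} applied to $F+I_A$ (which is proper because $A\cap\operatorname{dom}F\neq\emptyset$), the left-hand side of \eqref{eqthm21a} is closed, so it suffices to show $\operatorname*{epi}_K(F+I_C+T\circ G)^\ast \subset \operatorname*{epi}_K(F+I_A)^\ast$ for every $T\in\mathcal{L}_+(S,K)$. Fix such a $T$ and $(L,y)\in\operatorname*{epi}_K(F+I_C+T\circ G)^\ast$. By Lemma~\ref{rem1.1}, $y-L(x)+F(x)+I_C(x)+(T\circ G)(x)\notin-\operatorname*{int}K$ for all $x\in X$. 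For $x\in A=C\cap G^{-1}(-S)$ one has $I_C(x)=0_Y$, $G(x)\in-S$, and hence $(T\circ G)(x)=T(G(x))\in T(-S)\subset -K$ since $T\in\mathcal{L}_+(S,K)$. Using $-K+(-\operatorname*{int}K)=-\operatorname*{int}K$ (i.e.\ \eqref{marco1} reflected), if $y-L(x)+F(x)\in-\operatorname*{int}K$ then $y-L(x)+F(x)+(T\circ G)(x)\in-\operatorname*{int}K$, a contradiction; thus $y-L(x)+F(x)+I_A(x)=y-L(x)+F(x)\notin-\operatorname*{int}K$ for $x\in A$, while for $x\notin A$ this is trivial since $I_A(x)=+\infty_Y$. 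Again by Lemma~\ref{rem1.1}, $(L,y)\in\operatorname*{epi}_K(F+I_A)^\ast$, and taking closures gives $\supseteq$.

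\emph{The hard inclusion} ($\subseteq$). Take $(L,y)\in\operatorname*{epi}_K(F+I_A)^\ast$. By Lemma~\ref{rem1.1}, $y-L(x)+F(x)\notin-\operatorname*{int}K$ for all $x\in A\cap\operatorname{dom}F$. Equivalently, writing $M:=\{L(x)-F(x):x\in A\cap\operatorname{dom}F\}$, we have $y\notin M-\operatorname*{int}K$; by Lemma~\ref{lem1}, the set $M-\operatorname*{int}K$ is open and convex (apply the lemma to $F-L$, which is still $K$-convex, on the convex set $A$ — here one uses that $A$ is convex, which holds because $C$ is convex and $G$ is $S$-convex and $S$-epi closed). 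Then Lemma~\ref{lem3} yields a nonzero $y^\ast\in Y^\ast$ with $y^\ast(u)<y^\ast(y)$ for all $u\in M-\operatorname*{int}K$; by Lemma~\ref{lem2}, $y^\ast\in K^+$ and $y^\ast(L(x)-F(x))\le y^\ast(y)$ for all $x\in A\cap\operatorname{dom}F$. Rewriting, the scalar function $f:=y^\ast\circ F$ — which is proper by hypothesis, lsc by hypothesis, and convex because $F$ is $K$-convex and $y^\ast\in K^+$ — satisfies $(y^\ast\circ L)(x)-f(x)-i_A(x)\le y^\ast(y)$ for all $x$, i.e.\ $(y^\ast\circ L, y^\ast(y))\in\operatorname*{epi}(f+i_A)^\ast$. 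Now invoke Lemma~\ref{theo1} (its hypotheses hold: $C$ closed convex, $f$ proper lsc convex, $G$ proper $S$-convex $S$-epi closed, $A\cap\operatorname{dom}f=A\cap\operatorname{dom}F\neq\emptyset$): there are nets giving, for each $\varepsilon$, some $z^\ast\in S^+$ and a scalar point in $\operatorname*{epi}(f+i_C+z^\ast\circ G)^\ast$ close to $(y^\ast\circ L,y^\ast(y))$. The main obstacle is the \emph{lifting} step: from the scalar dual element $z^\ast\in S^+$ we must manufacture a genuine operator $T\in\mathcal{L}_+(S,K)$ so that the scalar inequality for $z^\ast\circ G$ upgrades to the vector condition $y-L(x)+F(x)+I_C(x)+(T\circ G)(x)\notin-\operatorname*{int}K$. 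The device is to fix $k_0\in\operatorname*{int}K$ with $y^\ast(k_0)=1$ and set $T:=k_0\,(z^\ast\circ\,\cdot\,)\in\mathcal{L}(Z,Y)$, which maps $S$ into $\mathbb{R}_+k_0\subset K$, so $T\in\mathcal{L}_+(S,K)$; one then checks that the scalarization $y^\ast\circ(\cdot)$ of the vector candidate point reproduces exactly the scalar epigraph membership, and combines this with Lemma~\ref{rem1.1} together with the fact that $y^\ast\in K^+$ separates $-\operatorname*{int}K$ from the relevant points. Carrying the approximation through the closure and assembling over all such $T$ gives $(L,y)\in\operatorname*{cl}\big[\bigcup_{T\in\mathcal{L}_+(S,K)}\operatorname*{epi}_K(F+I_C+T\circ G)^\ast\big]$, completing the proof.
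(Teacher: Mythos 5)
Your overall strategy is the same as the paper's (scalarize with a separating $y^\ast\in K^+$ via Lemmas \ref{lem1}, \ref{lem3}, \ref{lem2}, apply the scalar result, then lift $z^\ast\in S^+$ to $T=k_0\,z^\ast(\cdot)/y^\ast(k_0)\in\mathcal{L}_+(S,K)$), and your easy inclusion is fine. But there is a genuine gap at the very point you label "the main obstacle". Lemma \ref{theo1} only gives that $(y^\ast\circ L,\,y^\ast(y))$ lies in the \emph{closure} of $\bigcup_{z^\ast\in S^+}\operatorname*{epi}(y^\ast\circ F+i_C+z^\ast\circ G)^\ast$, i.e.\ there are nets $x_i^\ast\to y^\ast\circ L$, $r_i\to y^\ast(y)$, $z_i^\ast\in S^+$ with $(x_i^\ast,r_i)\in\operatorname*{epi}(y^\ast\circ F+i_C+z_i^\ast\circ G)^\ast$. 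Your sentence "one then checks that the scalarization $y^\ast\circ(\cdot)$ of the vector candidate point reproduces exactly the scalar epigraph membership" is exactly what fails: the exact scalar membership holds for the perturbed points $(x_i^\ast,r_i)$, not for $(y^\ast\circ L,y^\ast(y))$ itself, so from your data you cannot place $(L,y)$ itself in any $\operatorname*{epi}_K(F+I_C+T\circ G)^\ast$, and "carrying the approximation through the closure" is unjustified because the approximation lives in $X^\ast\times\mathbb{R}$, not in $\mathcal{L}(X,Y)\times Y$.

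What is missing is the lifting of the \emph{primal} scalar net back to the vector level, which is the key device in the paper's proof: for each $i$ set
\[
y_i:=y+\frac{r_i-y^\ast(y)}{y^\ast(k_0)}\,k_0,
\qquad
L_i(x):=L(x)+\frac{x_i^\ast(x)-(y^\ast\circ L)(x)}{y^\ast(k_0)}\,k_0 ,
\]
so that $y^\ast\circ L_i=x_i^\ast$, $y^\ast(y_i)=r_i$, and $(L_i,y_i)\to(L,y)$ in $\mathcal{L}(X,Y)\times Y$. Then the scalar inequality $r_i\ge x_i^\ast(x)-(y^\ast\circ F)(x)-(z_i^\ast\circ G)(x)$ on $C\cap\operatorname{dom}F$ rewrites, with $T_i:=k_0\,z_i^\ast(\cdot)/y^\ast(k_0)$, as $y^\ast\bigl(L_i(x)-F(x)-(T_i\circ G)(x)-y_i\bigr)\le 0$, and since $y^\ast(k)>0$ for all $k\in\operatorname*{int}K$ this gives $y_i\notin L_i(x)-F(x)-(T_i\circ G)(x)-\operatorname*{int}K$, hence $(L_i,y_i)\in\operatorname*{epi}_K(F+I_C+T_i\circ G)^\ast$ by Lemma \ref{rem1.1}; letting $i$ run then puts $(L,y)$ in the closure of the union. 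Without constructing $(L_i,y_i)$ (or some equivalent mechanism transporting the scalar approximation into $\mathcal{L}(X,Y)\times Y$ along the direction $k_0$), your argument does not reach the conclusion. (Two minor remarks: convexity of $A$ needs only the $S$-convexity of $G$, not $S$-epi closedness; and you should note $M-\operatorname*{int}K\neq\emptyset$, which follows from $A\cap\operatorname{dom}F\neq\emptyset$, before invoking Lemma \ref{lem3}.)
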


\begin{proof}
$\bullet$ According to \cite[Lemma 4.1]{DGLM16},
\[
\operatorname*{epi}{}_{K}(F+I_{A})^{\ast}\supset\bigcup_{T\in\mathcal{L}%
_{+}(S,K)}\operatorname*{epi}{}_{K}(F+I_{C}+T\circ G)^{\ast},
\]
which together with Lemma \ref{lem4} yields
\begin{equation}
\operatorname*{epi}{}_{K}(F+I_{A})^{\ast}\supset\operatorname*{cl}\left[
\bigcup_{T\in\mathcal{L}_{+}(S,K)}\operatorname*{epi}{}_{K}(F+I_{C}+T\circ
G)^{\ast}\right]  .\label{eqthm21a1}%
\end{equation}
$\bullet$ To prove (\ref{eqthm21a}), it is sufficient to show that the
converse inclusion in (\ref{eqthm21a1}) also holds. For this, take arbitrarily
$(L,y)\in\operatorname*{epi}{}_{K}(F+I_{A})^{\ast}$ and we will show that
\begin{equation}
(L,y)\in\operatorname*{cl}\left[  \bigcup_{T\in\mathcal{L}_{+}(S,K)}%
\operatorname*{epi}{}_{K}(F+I_{C}+T\circ G)^{\ast}\right]  .\label{eqb}%
\end{equation}

Observe that if $(L,y)\in\operatorname*{epi}{}_{K}(F+I_{A})^{\ast}$ then, by
Lemma \ref{rem1.1},
\[
y\notin L(x)-F(x)-\operatorname*{int}K,\quad\forall x\in A\cap
\operatorname{dom}F,
\]
or equivalently, $y\notin(L-F)(A\cap\operatorname{dom}F)-\operatorname*{int}K$.

$\bullet$ Now, since $G$ is $S$-convex, $G^{-1}(-S)$ is a convex set, and
hence, $A=C\cap G^{-1}(-S)$ is convex, too. Moreover, $F-L$ is a $K$-convex
mapping (as $F$ is $K$-convex), and we get from Lemma \ref{lem1} that
$(F-L)(A\cap\operatorname{dom}F)+\operatorname*{int}K$ is convex, or
equivalently, $(L-F)(A\cap\operatorname{dom}F)-\operatorname*{int}K$ is convex.

On the one hand, as $y\notin(L-F)(A\cap\operatorname{dom}%
F)-\operatorname*{int}K$, Lemma \ref{lem3} ensures the existence of $y^{\ast
}\in Y^{\ast}$ satisfying
\[
y^{\ast}(u)<y^{\ast}(y),\quad\forall u\in(L-F)(A\cap\operatorname{dom}%
F)-\operatorname*{int}K.
\]
It then follows from Lemma \ref{lem2} that
\begin{align}
& y^{\ast}\circ(L-F)(x)\leq y^{\ast}(y),\quad\forall x\in A\cap
\operatorname{dom}F,\label{1.1}\\
& y^{\ast}\in K^{+}\text{ and }\ y^{\ast}(k)>0\ \text{ }\forall k\in
\operatorname*{int}K.\label{eq15}%
\end{align}

$\bullet$ On the other hand, since $y^{\ast}\circ F$ is a proper convex lsc
function, applying Lemma \ref{theo1} to the scalar function $y^{\ast}\circ F,$
one gets
\begin{equation}
\operatorname*{epi}(y^{\ast}\circ F+i_{A})^{\ast}=\operatorname*{cl}\left[
\bigcup_{z^{\ast}\in S^{+}}\operatorname*{epi}(y^{\ast}\circ F+i_{C}+z^{\ast
}\circ G)^{\ast}\right]  .\label{2}%
\end{equation}

Note that \eqref{1.1} is equivalent to $y^{\ast}(y)\geq(y^{\ast}\circ
F+i_{A})^{\ast}(y^{\ast}\circ L)$ or, equivalently, $(y^{\ast}\circ L,y^{\ast
}(y))\in\operatorname*{epi}(y^{\ast}\circ F+i_{A})^{\ast}$. Hence, by
\eqref{2}, there exist nets $\{z_{i}^{\ast}\}_{i\in I}\subset S^{+}$,
$\{x_{i}^{\ast}\}_{i\in I}\subset X^{\ast}$ and $\{r_{i}\}_{i\in I}%
\subset\mathbb{R}$ such that $x_{i}^{\ast}\rightarrow y^{\ast}\circ L$,
$r_{i}\rightarrow y^{\ast}(y)$ and
\begin{equation}
(x_{i}^{\ast},r_{i})\in\operatorname*{epi}(y^{\ast}\circ F+i_{C}+z_{i}^{\ast
}\circ G)^{\ast},\quad\forall i\in I.\label{7bis}%
\end{equation}
Take an arbitrary $k_{0}\in\operatorname*{int}K$. Then $y^{\ast}(k_{0})>0$
(see (\ref{eq15})).

Now for each $i\in I$, set
\[
y_{i}:=y+\frac{r_{i}-y^{\ast}(y)}{y^{\ast}(k_{0})}k_{0},
\]
and define the mapping $L_{i}:X\longrightarrow Y$ by
\[
L_{i}(x):=L(x)+\frac{x_{i}^{\ast}(x)-\left(  y^{\ast}\circ L\right)
(x)}{y^{\ast}(k_{0})}k_{0},\ \forall x\in X.
\]
It is easy to check that%
\begin{equation}
y^{\ast}(y_{i})=r_{i},\ L_{i}\in\mathcal{L}(X,Y),\ y^{\ast}\circ L_{i}%
=x_{i}^{\ast},\forall i\in I\ \text{ and }(y_{i},L_{i})\rightarrow
(y,L).\label{2.1}%
\end{equation}

$\bullet$ We now claim that
\[
(L_{i},y_{i})\in\bigcup_{T\in\mathcal{L}_{+}(S,K)}\operatorname*{epi}{}%
_{K}(F+I_{C}+T\circ G)^{\ast},\quad\forall i\in I.
\]
Indeed, for each $i\in I$, combining \eqref{7bis} and (\ref{2.1}) we get
\[
y^{\ast}(y_{i})\geq(y^{\ast}\circ F+i_{C}+z_{i}^{\ast}\circ G)^{\ast}(y^{\ast
}\circ L_{i}),
\]
or equivalently,
\begin{equation}
y^{\ast}(y_{i})\geq\left(  y^{\ast}\circ L_{i}\right)  (x)-\left(  y^{\ast
}\circ F\right)  (x)-\left(  z_{i}^{\ast}\circ G\right)  (x),\quad\forall x\in
C\cap\operatorname{dom}F.\label{8}%
\end{equation}
For each $i\in I$, define $T_{i}\colon Z\longrightarrow Y$ by
\[
T_{i}(z):=\frac{z_{i}^{\ast}(z)}{y^{\ast}(k_{0})}k_{0},\ \forall z\in Z.
\]
Then $T_{i}\in\mathcal{L}(Z,Y).$ Moreover, if $z\in S,$ then $z_{i}^{\ast
}(z)\geq0$ (as $z_{i}^{\ast}\in S^{+}$) and so, $T_{i}(z)\in K$ (as $k_{0}%
\in\operatorname*{int}K$ and $y^{\ast}(k_{0})>0$). Consequently, $T_{i}%
\in\mathcal{L}_{+}(S,K)$.

Since $y^{\ast}\circ T_{i}=z_{i}^{\ast}$, with the help of the mappings
$T_{i}\in\mathcal{L}_{+}(S,K)$, $i\in I$, \eqref{8} can be rewritten as
\[
y^{\ast}(y_{i})\geq\left(  y^{\ast}\circ L_{i}\right)  (x)-\left(  y^{\ast
}\circ F\right)  (x)-(y^{\ast}\circ T_{i}\circ G)(x),\quad\forall x\in
C\cap\operatorname{dom}F,
\]
or equivalently,
\[
y^{\ast}\left(  L_{i}(x)-\ F(x)-(T_{i}\circ G)(x)-y_{i}\right)  \leq
0,\ \forall x\in C\cap\operatorname{dom}F.
\]
The last inequality, together with \eqref{eq15}, implies that
\[
y_{i}\notin L_{i}(x)-F(x)-\left(  T_{i}\circ G\right)  (x)-\operatorname*{int}%
K,\quad\forall x\in C\cap\operatorname{dom}F,
\]
which, together with Lemma \ref{rem1.1}, yields $(L_{i},y_{i})\in
\operatorname*{epi}{}_{K}(F+I_{C}+T_{i}\circ G)^{\ast}$.

Finally, taking (\ref{2.1}) into account, \eqref{eqb} follows and we are done.
\medskip
\end{proof}

We now show that the closure in the right-hand side of \eqref{eqthm21a} in
Theorem \ref{theo2} can be removed if certain qualification condition holds.
To do this we need the lemma below on scalar functions.\medskip

\begin{lemma}
\label{lemscalar2} Let $C$ be a nonempty convex subset of $X$, $f\colon
X\rightarrow\mathbb{R}\cup\{+\infty\}$ be a proper convex function, and
$G\colon\ X\rightarrow Z\cup\{+\infty_{Z}\}$ be a proper $S$-convex mapping.
Let $D:=G\left(  C\cap\operatorname{dom}f\cap\operatorname{dom}G\right)
+S.$\ Assume that $A\cap\operatorname{dom}f\neq\emptyset$ and one of the
following conditions is fulfilled:\newline$(i)$ There exists $\bar{x}\in
C\cap\operatorname{dom}f$ such that $G(\bar{x})\in-\operatorname*{int}%
S;\newline(ii)$ $X,Z$ are Fr\'{e}chet spaces, $C$ is closed, $f$ is lsc, $G$
is $S$-epi closed and $0_{Z}\in\operatorname*{sqri}D;$\newline$(iii)$
$\dim\operatorname*{lin}D<+\infty$ and\thinspace\ $0_{Z}\in\operatorname*{ri}%
D.$\textrm{\ }
\end{lemma}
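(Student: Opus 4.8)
The plan is to reduce the claimed identity to a \emph{stable strong Lagrangian duality} statement for the scalar problem $\min\{f(x):x\in C,\ G(x)\leqq_{S}0_{Z}\}$ and then to verify that statement under each of the hypotheses (i)--(iii). Concretely, the assertion to be proved is the scalar analogue of \eqref{eqthm21a} with the closure dropped,
\[
\operatorname*{epi}(f+i_{A})^{\ast}=\bigcup_{z^{\ast}\in S^{+}}\operatorname*{epi}(f+i_{C}+z^{\ast}\circ G)^{\ast}.
\]
The inclusion $\supset$ here is elementary: since $z^{\ast}\in S^{+}$ and $A=C\cap G^{-1}(-S)$, one has $f+i_{C}+z^{\ast}\circ G\leq f+i_{A}$ pointwise on $X$, whence $\operatorname*{epi}(f+i_{C}+z^{\ast}\circ G)^{\ast}\subset\operatorname*{epi}(f+i_{A})^{\ast}$. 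For the converse, fix $(x^{\ast},r)\in\operatorname*{epi}(f+i_{A})^{\ast}$; as $A\cap\operatorname{dom}f\neq\emptyset$ the function $f+i_{A}$ is proper, so $(f+i_{A})^{\ast}(x^{\ast})$ is finite, and it suffices to produce $z^{\ast}\in S^{+}$ with $(f+i_{C}+z^{\ast}\circ G)^{\ast}(x^{\ast})\leq(f+i_{A})^{\ast}(x^{\ast})\ (\leq r)$. Writing $p(x^{\ast}):=\inf_{x\in A}\bigl(f(x)-x^{\ast}(x)\bigr)=-(f+i_{A})^{\ast}(x^{\ast})\in\mathbb{R}$ and, for $z^{\ast}\in S^{+}$, $d(z^{\ast}):=\inf_{x\in C}\bigl(f(x)+z^{\ast}(G(x))-x^{\ast}(x)\bigr)=-(f+i_{C}+z^{\ast}\circ G)^{\ast}(x^{\ast})$, one always has $d(z^{\ast})\leq p(x^{\ast})$ (because $z^{\ast}\circ G\leq0$ on $A$), so the whole matter reduces to exhibiting $z^{\ast}\in S^{+}$ with $d(z^{\ast})=p(x^{\ast})$, i.e.\ \emph{no duality gap and attainment in the dual}.

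The device is the convex value function $v\colon Z\rightarrow\mathbb{R}\cup\{\pm\infty\}$,
\[
v(z):=\inf\bigl\{f(x)-x^{\ast}(x)\,:\,x\in C\cap\operatorname{dom}f,\ G(x)\leqq_{S}z\bigr\},
\]
which is convex (by $S$-convexity of $G$ and convexity of $f$ and $C$), satisfies $v(0_{Z})=p(x^{\ast})$, and has effective domain $\operatorname{dom}v=G\bigl(C\cap\operatorname{dom}f\cap\operatorname{dom}G\bigr)+S=D$. A subgradient relation $-z^{\ast}\in\partial v(0_{Z})$ yields at once $z^{\ast}\in S^{+}$ (test with $z=s\in S$, using $v(s)\leq v(0_{Z})$ since $v$ is $\leqq_{S}$-nonincreasing) and, testing with $z=G(x)$ for $x\in C\cap\operatorname{dom}f\cap\operatorname{dom}G$ (the remaining $x$'s contributing $+\infty$), $d(z^{\ast})\geq v(0_{Z})=p(x^{\ast})$; together with $d(z^{\ast})\leq p(x^{\ast})$ this gives $d(z^{\ast})=p(x^{\ast})$. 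So in each of the three cases it only remains to guarantee $\partial v(0_{Z})\neq\emptyset$.

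Under (i), the Slater point $\bar{x}$ gives $-G(\bar{x})\in\operatorname*{int}S$, so for $z$ in a balanced neighbourhood $V$ of $0_{Z}$ one still has $G(\bar{x})\leqq_{S}z$, whence $v\leq f(\bar{x})-x^{\ast}(\bar{x})<+\infty$ on $V$; a convex function that is finite at $0_{Z}$ and bounded above on a neighbourhood of $0_{Z}$ cannot take the value $-\infty$ on a balanced neighbourhood (pair $z_{0}$ with $-z_{0}$) and is therefore continuous, hence subdifferentiable, at $0_{Z}$. Under (ii), the standing assumptions ($X,Z$ Fr\'{e}chet, $C$ closed, $f$ lsc, $G$ $S$-epi closed) make the epigraph of the relevant perturbed problem closed, and $0_{Z}\in\operatorname*{sqri}D=\operatorname*{sqri}(\operatorname{dom}v)$ is precisely the Attouch--Br\'{e}zis-type interiority condition which forces $v$ to be lower semicontinuous at $0_{Z}$ with $v^{\ast\ast}(0_{Z})=v(0_{Z})$, so again $\partial v(0_{Z})\neq\emptyset$; here one invokes the classical closedness/interiority regularity theorems for Fenchel--Lagrange duality (e.g.\ \cite{Bot09}). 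Under (iii), $\dim\operatorname*{lin}D<+\infty$ together with $0_{Z}\in\operatorname*{ri}D$ is the finite-dimensional relative-interior qualification, and $\partial v(0_{Z})\neq\emptyset$ follows by applying finite-dimensional convex analysis to the restriction of $v$ to the finite-dimensional subspace $\operatorname*{lin}D$, with no closedness required. Assembling these, in every case $\partial v(0_{Z})\neq\emptyset$, which supplies the multiplier $z^{\ast}$ and proves the displayed identity. I expect the only genuinely delicate point to be case (ii): one must work in a possibly non-normable Fr\'{e}chet space, carefully verify closedness of the appropriate epigraph-type set from the standing closedness hypotheses, and then extract lower semicontinuity of $v$ at $0_{Z}$ from $0_{Z}\in\operatorname*{sqri}D$ --- this is where the strong-quasi-relative-interior machinery and the Fr\'{e}chet hypothesis are essential; cases (i) and (iii) are, respectively, the textbook Slater separation argument and a routine reduction to finite dimensions.
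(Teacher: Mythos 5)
Your proposal is correct, and its core is the same reduction the paper makes: the epigraph identity is exactly stable strong Lagrange duality (zero gap plus dual attainment) for the linearly perturbed problem with objective $f-x^{\ast}$, under any of $(i)$--$(iii)$. The paper's proof is essentially a two-line citation: it applies \cite[Theorem 3.4]{Bot09} to $f-x^{\ast}$ and translates the resulting $\max$--$\inf$ equality into $(f+i_{A})^{\ast}(x^{\ast})=(f+i_{C}+\bar{z}^{\ast}\circ G)^{\ast}(x^{\ast})$; all three qualification conditions are handled at once by that cited theorem. What you do differently is re-derive that duality statement through the marginal function $v$ and the equivalence between dual attainment and $\partial v(0_{Z})\neq\emptyset$: this gives a genuinely self-contained argument in case $(i)$ (your Slater/continuity argument is complete and correct), whereas for $(ii)$ you end up invoking the same sqri-based regularity theorems from \cite{Bot09} that the paper cites, and $(iii)$ is left as a finite-dimensional reduction. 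Two small precision points: properness of $f+i_{A}$ only gives $(f+i_{A})^{\ast}(x^{\ast})>-\infty$, and it is the membership $(x^{\ast},r)\in\operatorname*{epi}(f+i_{A})^{\ast}$ that supplies finiteness; and in case $(ii)$ the step ``$v^{\ast\ast}(0_{Z})=v(0_{Z})$, so $\partial v(0_{Z})\neq\emptyset$'' is not an implication (zero duality gap does not by itself yield dual attainment) --- you should state that the cited sqri theorems deliver attainment, i.e.\ subdifferentiability of $v$ at $0_{Z}$, directly. With those phrasings tightened, your argument is a valid, somewhat more explicit version of the paper's proof.
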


Then
\[
\operatorname*{epi}(f+i_{A})^{\ast}=\bigcup_{z^{\ast}\in S^{+}}%
\operatorname*{epi}(f+i_{C}+z^{\ast}\circ G)^{\ast}.
\]

\begin{proof}
Take an arbitrary $x^{\ast}\in X^{\ast}$. Applying \cite[Theorem 3.4]{Bot09},
with $f-x^{\ast}$ playing the role of primal objective function, we get the
existence of $\bar{z}^{\ast}\in S^{+}$ satisfying%
\[%
\begin{array}
[c]{ll}%
\inf\limits_{_{\substack{x\in C \\g(x)\in-S}}}[f(x)-x^{\ast}(x)] &
=\max\limits_{z^{\ast}\in S^{+}}\inf\limits_{x\in C}[f(x)-x^{\ast}(x)+\left(
z^{\ast}\circ G\right)  (x)]\\
& =\inf\limits_{x\in C}[f(x)-x^{\ast}(x)+\left(  \bar{z}^{\ast}\circ G\right)
(x)],
\end{array}
\]
which is equivalent to
\[
(f+i_{A})^{\ast}(x^{\ast})=(f+i_{C}+\bar{z}^{\ast}\circ G)^{\ast}(x^{\ast}),
\]
for some $\bar{z}^{\ast}\in S^{+}$, showing that
\[
\operatorname*{epi}(f+i_{A})^{\ast}=\bigcup_{z^{\ast}\in S^{+}}%
\operatorname*{epi}(f+i_{C}+z^{\ast}\circ G)^{\ast}%
\]
and we are done. \medskip
\end{proof}

\begin{theorem}
[1st non-asymptotic representation of $\operatorname*{epi}{}_{K}%
(F+I_{A})^{\ast}$]\label{theovector2} Let $C$ be a nonempty convex subset of
$X$, $F\colon X\rightarrow Y\cup\{+\infty_{Y}\}$ be a proper $K$-convex
mapping, and $G\colon\ X\rightarrow Z\cup\{+\infty_{Z}\}$ be a proper
$S$-convex mapping. Consider the set $E:=G\left(  C\cap\operatorname{dom}%
F\cap\operatorname{dom}G\right)  +S.$ Assume that $A\cap\operatorname{dom}%
F\neq\emptyset$ and at least one of the following qualification conditions
holds:\newline$(c_{1})$ There exists $\bar{x}\in C\cap\operatorname*{dom}F$
such that $G(\bar{x})\in-\operatorname*{int}S;$\newline$(c_{2})$ $X,Z$ are
Fr\'{e}chet spaces, $C$ is closed, $y^{\ast}\circ F$ is lsc for all $y^{\ast
}\in Y^{\ast}$, $G$ is $S$-epi closed and $0_{Z}\in\operatorname*{sqri}%
E;$\newline$(c_{3})$ $\dim\operatorname*{lin}E<+\infty$ and $0_{Z}%
\in\operatorname*{ri}E$.\newline Then,
\[
\operatorname*{epi}{}_{K}(F+I_{A})^{\ast}=\bigcup_{T\in\mathcal{L}_{+}%
(S,K)}\operatorname*{epi}{}_{K}(F+I_{C}+T\circ G)^{\ast}.
\]

\end{theorem}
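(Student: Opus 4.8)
The plan is to follow the proof of Theorem \ref{theo2} line by line up to the point where the scalar machinery enters, and there to replace the \emph{asymptotic} scalar result Lemma \ref{theo1} by the \emph{non-asymptotic} one, Lemma \ref{lemscalar2}; this is precisely what removes the closure operation and the nets, leaving a single multiplier $T\in\mathcal{L}_{+}(S,K)$.

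First I would dispose of the inclusion ``$\supseteq$'', which is immediate from \cite[Lemma 4.1]{DGLM16} and needs none of the qualification conditions. For ``$\subseteq$'', fix $(L,y)\in\operatorname*{epi}{}_{K}(F+I_{A})^{\ast}$. By Lemma \ref{rem1.1} this says $y\notin (L-F)(A\cap\operatorname{dom}F)-\operatorname*{int}K$. Since $G$ is $S$-convex, $A=C\cap G^{-1}(-S)$ is convex, and since $F-L$ is $K$-convex, Lemma \ref{lem1} shows $(L-F)(A\cap\operatorname{dom}F)-\operatorname*{int}K$ is a convex (and open) subset of $Y$; hence Lemma \ref{lem3} furnishes $y^{\ast}\in Y^{\ast}$ with $y^{\ast}(u)<y^{\ast}(y)$ on that set, and Lemma \ref{lem2} then gives $y^{\ast}\in K^{+}$, $y^{\ast}(k)>0$ for all $k\in\operatorname*{int}K$ (in particular $y^{\ast}\neq 0_{Y^{\ast}}$), and $(y^{\ast}\circ(L-F))(x)\le y^{\ast}(y)$ for every $x\in A\cap\operatorname{dom}F$, i.e. $\big(y^{\ast}\circ L,\,y^{\ast}(y)\big)\in\operatorname*{epi}(y^{\ast}\circ F+i_{A})^{\ast}$. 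This portion is verbatim the argument in the proof of Theorem \ref{theo2}.

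The new ingredient is to apply Lemma \ref{lemscalar2} to the scalar function $f:=y^{\ast}\circ F$. One first checks that $f$ is proper and convex: properness holds because $F$ is proper and $\operatorname{dom}(y^{\ast}\circ F)=\operatorname{dom}F$ (here we use $y^{\ast}\neq 0_{Y^{\ast}}$, so $y^{\ast}(+\infty_{Y})=+\infty$ and $f$ is $\mathbb{R}\cup\{+\infty\}$-valued), while convexity follows from the $K$-convexity of $F$ together with $y^{\ast}\in K^{+}$. The step I expect to take the most care is verifying that each of $(c_{1})$, $(c_{2})$, $(c_{3})$ entails the respective hypothesis $(i)$, $(ii)$, $(iii)$ of Lemma \ref{lemscalar2} for this $f$. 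For $(c_{1})$ it is immediate because $C\cap\operatorname{dom}f=C\cap\operatorname{dom}F$, so the Slater point of $(c_{1})$ lies in $C\cap\operatorname{dom}f$; for $(c_{2})$ and $(c_{3})$ the decisive observation is again $\operatorname{dom}(y^{\ast}\circ F)=\operatorname{dom}F$, which forces the set $D:=G(C\cap\operatorname{dom}f\cap\operatorname{dom}G)+S$ of Lemma \ref{lemscalar2} to coincide with the set $E$ of the theorem, so that $0_{Z}\in\operatorname*{sqri}E$, and $0_{Z}\in\operatorname*{ri}E$ with $\dim\operatorname*{lin}E<+\infty$, transfer literally; the lower semicontinuity of $f=y^{\ast}\circ F$ demanded in $(c_{2})$ is assumed outright. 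Granting this, Lemma \ref{lemscalar2} yields $\operatorname*{epi}(y^{\ast}\circ F+i_{A})^{\ast}=\bigcup_{z^{\ast}\in S^{+}}\operatorname*{epi}(y^{\ast}\circ F+i_{C}+z^{\ast}\circ G)^{\ast}$, so there exists a single $\bar{z}^{\ast}\in S^{+}$ with $y^{\ast}(y)\ge(y^{\ast}\circ L)(x)-(y^{\ast}\circ F)(x)-(\bar{z}^{\ast}\circ G)(x)$ for all $x\in C\cap\operatorname{dom}F$.

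Finally I would conclude exactly as in Theorem \ref{theo2}: picking $k_{0}\in\operatorname*{int}K$ (so $y^{\ast}(k_{0})>0$) and setting $T(z):=\frac{\bar{z}^{\ast}(z)}{y^{\ast}(k_{0})}k_{0}$, one gets $T\in\mathcal{L}(Z,Y)$, $y^{\ast}\circ T=\bar{z}^{\ast}$, and $T\in\mathcal{L}_{+}(S,K)$ since $\bar{z}^{\ast}\in S^{+}$ gives $T(S)\subset\mathbb{R}_{+}k_{0}\subset K$. Substituting $y^{\ast}\circ T$ for $\bar{z}^{\ast}$ turns the last inequality into $y^{\ast}\big(L(x)-F(x)-(T\circ G)(x)-y\big)\le 0$ for all $x\in C\cap\operatorname{dom}F$; because $y^{\ast}$ is strictly positive on $\operatorname*{int}K$, this forces $y\notin L(x)-F(x)-(T\circ G)(x)-\operatorname*{int}K$ for all such $x$, and Lemma \ref{rem1.1} applied to $F+I_{C}+T\circ G$ then gives $(L,y)\in\operatorname*{epi}{}_{K}(F+I_{C}+T\circ G)^{\ast}$, establishing ``$\subseteq$''. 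The only genuine obstacle is the bookkeeping of the previous paragraph --- the domain identity $\operatorname{dom}(y^{\ast}\circ F)=\operatorname{dom}F$ and the ensuing equality $D=E$; the rest merely transcribes arguments already carried out for Theorem \ref{theo2}.
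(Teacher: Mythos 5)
Your proposal is correct and follows essentially the same route as the paper, which itself states that the proof "goes in parallel to the one of Theorem \ref{theo2}, using Lemma \ref{lemscalar2} instead of Lemma \ref{theo1}": the easy inclusion via \cite[Lemma 4.1]{DGLM16}, the separation step through Lemmas \ref{rem1.1}, \ref{lem1}, \ref{lem3}, \ref{lem2}, the scalar non-asymptotic representation applied to $y^{\ast}\circ F$, and the construction $T(z)=\frac{\bar z^{\ast}(z)}{y^{\ast}(k_{0})}k_{0}$. Your extra bookkeeping (the identity $\operatorname{dom}(y^{\ast}\circ F)=\operatorname{dom}F$ forcing $D=E$, so that $(c_{1})$--$(c_{3})$ transfer to the hypotheses of Lemma \ref{lemscalar2}) is a correct and welcome detail that the paper leaves implicit.
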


\begin{proof}
The proof goes in parallel to the one of Theorem \ref{theo2}, using Lemma
\ref{lemscalar2} instead of Lemma \ref{theo1}. For an easy reading, the main
ideas will be repeated below.

$\bullet$ By \cite[Lemma 4.1]{DGLM16}, it is sufficient to show that
\begin{equation}
\operatorname*{epi}{}_{K}(F+I_{A})^{\ast}\subset\bigcup_{T\in\mathcal{L}%
_{+}(S,K)}\operatorname*{epi}{}_{K}(F+I_{C}+T\circ G)^{\ast}.\label{eqe}%
\end{equation}

$\bullet$ Take an arbitrary $(L,y)\in\operatorname*{epi}{}_{K}(F+I_{A})^{\ast
}$. Then, by the same argument as the one in the proof of Theorem \ref{theo2},
using Lemmas \ref{rem1.1}, \ref{lem1}, \ref{lem3}, and \ref{lem2}, there
exists $y^{\ast}\in Y^{\ast}$ such that \eqref{1.1} and \eqref{eq15} hold.

Observe also that \eqref{1.1} is equivalent to $y^{\ast}(y)\geq(y^{\ast}\circ
F+i_{A})^{\ast}(y^{\ast}\circ L)$, which accounts for
\begin{equation}
(y^{\ast}\circ L,y^{\ast}(y))\in\operatorname*{epi}(y^{\ast}\circ
F+i_{A})^{\ast}.\label{eqc}%
\end{equation}

$\bullet$ Because $y^{\ast}\in K^{+}$ and $F$ is a $K$-convex mapping,
$y^{\ast}\circ F$ is a convex function. If one of the qualification conditions
$(c_{1}),(c_{2}),(c_{3})$ holds then, by Lemma \ref{lemscalar2}, one has
\begin{equation}
\operatorname*{epi}(y^{\ast}\circ F+i_{A})^{\ast}=\bigcup_{z^{\ast}\in S^{+}%
}\operatorname*{epi}(y^{\ast}\circ F+i_{C}+z^{\ast}\circ G)^{\ast
}.\label{2bis}%
\end{equation}
This and \eqref{eqc} ensure the existence of $z^{\ast}\in S^{+}$ satisfying
$(y^{\ast}\circ L,y^{\ast}(y))\in\operatorname*{epi}(y^{\ast}\circ
F+i_{C}+z^{\ast}\circ G)^{\ast}$, which means that
\begin{equation}
y^{\ast}(y)\geq\left(  y^{\ast}\circ L\right)  (x)-\left(  y^{\ast}\circ
F\right)  (x)-\left(  z^{\ast}\circ G\right)  (x),\quad\forall x\in
C\cap\operatorname{dom}F.\label{eqd}%
\end{equation}

$\bullet$ Now, pick $k_{0}\in\operatorname*{int}K$ and consider the linear
mapping $T\colon Z\longrightarrow Y$ such that
\[
T(z):=\frac{z^{\ast}(z)}{y^{\ast}(k_{0})}k_{0},\forall z\in Z.
\]
Then $T\in\mathcal{L}_{+}(S,K)$ and $y^{\ast}\circ T=z^{\ast}$. Hence,
\eqref{eqd} can be rewritten as
\[
y^{\ast}(y)\geq\left(  y^{\ast}\circ L\right)  (x)-\left(  y^{\ast}\circ
F\right)  (x)-(y^{\ast}\circ T\circ G)(x),\quad\forall x\in C\cap
\operatorname{dom}F,
\]
or equivalently,
\[
y^{\ast}\left(  L(x)-F(x)-\left(  T\circ G\right)  (x)-y\right)  \leq
0,\quad\forall x\in C\cap\operatorname{dom}F.
\]
So, by \eqref{eq15},
\[
L(x)-F(x)-\left(  T\circ G\right)  (x)-y\notin\operatorname*{int}%
K,\quad\forall x\in C\cap\operatorname{dom}F,
\]
which in turn yields, by Lemma \ref{rem1.1}, $(L,y)\in\operatorname*{epi}%
{}_{K}(F+I_{C}+T\circ G)^{\ast}$. Hence, \eqref{eqe} has been proved and the
proof is complete. \medskip
\end{proof}

\begin{example}
\label{Example2}Let $X,$ $Y,$ $Z,$ $F,$ and $G$ be as in Example
\ref{Example1}.\textbf{\ }Let $C=\mathbb{R}.$\ Due to the extreme simplicity
of $A=C\cap G^{-1}(-S)=\mathbb{R}_{+}$ in this case, $\operatorname*{epi}%
{}_{K}(F+I_{A})^{\ast}$ can be calculated directly. In fact, since
$(F+I_{A})^{\ast}\left(  \alpha,\beta\right)  =\operatorname*{WSup}%
\{\mathbb{R}_{+}(\alpha,\beta)\},$ one gets
\[
(F+I_{A})^{\ast}\left(  \alpha,\beta\right)  =\left\{
\begin{array}
[c]{ll}%
\left\{  +\infty_{\mathbb{R}^{2}}\right\}  , & \text{if }\alpha>0\text{ and
}\beta>0,\\
\left[  (-\mathbb{R}_{+})\times\{0\}\right]  \cup\left[  \{0\}\times
(-\mathbb{R}_{+})\right]  , & \text{if }\alpha\leq0\text{ and }\beta\leq0,\\
\mathbb{R}\left(  \alpha,\beta\right)  , & \text{if }\alpha\beta=0\text{ and
}\alpha+\beta>0,\\
\mathbb{R}_{+}\left(  \alpha,\beta\right)  \cup\left[  (-\mathbb{R}_{+}%
)\times\{0\}\right]  , & \text{if }\alpha>0\text{ and }\beta<0,\\
\mathbb{R}_{+}\left(  \alpha,\beta\right)  \cup\left[  \{0\}\times
(-\mathbb{R}_{+})\right]  , & \text{if }\alpha<0\text{ and }\beta>0.
\end{array}
\right.
\]
Thus, $\operatorname*{epi}{}_{K}(F+I_{A})^{\ast}=%
{\displaystyle\bigcup\limits_{i=1}^{5}}
P_{i},$ where%
\[%
\begin{array}
[c]{ll}%
P_{1} & =\left\{  \left(  \alpha,\beta,y_{1},y_{2}\right)  :\alpha\leq
0\wedge\beta\leq0\wedge\left(  y_{1}\geq0\vee y_{2}\geq0\right)  \right\}  ,\\
P_{2} & =\left\{  \left(  0,\beta,y_{1},y_{2}\right)  :\beta>0\wedge y_{1}%
\geq0\right\}  ,\\
P_{3} & =\left\{  \left(  \alpha,0,y_{1},y_{2}\right)  :\alpha>0\wedge
y_{2}\geq0\right\}  ,\\
P_{4} & =\left\{  \left(  \alpha,\beta,y_{1},y_{2}\right)  :\alpha
>0\wedge\beta<0\wedge y_{2}\geq\min\left\{  0,\frac{\beta}{\alpha}%
y_{1}\right\}  \right\}  ,\smallskip\\
P_{5} & =\left\{  \left(  \alpha,\beta,y_{1},y_{2}\right)  :\alpha
<0\wedge\beta>0\wedge y_{1}\geq\min\left\{  0,\frac{\alpha}{\beta}%
y_{2}\right\}  \right\}  .
\end{array}
\]
Since $\operatorname*{dom}(F+I_{A})^{\ast}=\left\{  \left(  \alpha
,\beta\right)  \in\mathbb{R}^{2}:\alpha\leq0\vee\beta\leq0\right\}  $ is not
convex, $\operatorname*{epi}{}_{K}(F+I_{A})^{\ast}$ cannot be convex while its
closedness follows from Lemma \ref{lem4} applied to the proper vector function
$F+I_{A}=I_{A}.$\newline According to Theorem \ref{theovector2}, as the
interior type condition ($c_{1}$) is satisfied by any positive number, we can
also express
\[
\operatorname*{epi}{}_{K}(F+I_{A})^{\ast}=\bigcup_{\left(  t_{1},t_{2}\right)
\in\mathbb{R}_{+}^{2}}\operatorname*{epi}{}_{K}(\left(  t_{1},t_{2}\right)
\circ G)^{\ast},
\]
where
\[
(\left(  t_{1},t_{2}\right)  \circ G)^{\ast}\left(  \alpha,\beta\right)
=\operatorname*{WSup}\{\mathbb{R}\left(  \alpha+t_{1},\beta+t_{2}\right)
\}=F^{\ast}\left(  \alpha+t_{1},\beta+t_{2}\right)  .
\]
So, $\operatorname*{epi}{}_{K}(\left(  t_{1},t_{2}\right)  \circ G)^{\ast}=$ $%
{\displaystyle\bigcup\limits_{i=1}^{4}}
Q_{i}\left(  t_{1},t_{2}\right)  ,$ with%
\[%
\begin{array}
[c]{ll}%
Q_{1}\left(  t_{1},t_{2}\right)  & =\left\{  \left(  -t_{1},-t_{2},y_{1}%
,y_{2}\right)  :y_{1}\geq0\vee y_{2}\geq0\right\}  ,\\
Q_{2}\left(  t_{1},t_{2}\right)  & =\left\{  \left(  \alpha,\beta,y_{1}%
,y_{2}\right)  :\left(  \alpha+t_{1}\right)  \left(  \beta+t_{2}\right)
<0\wedge y_{2}\geq\left(  \frac{\beta+t_{2}}{\alpha+t_{1}}\right)
y_{1}\right\}  ,\\
Q_{3}\left(  t_{1},t_{2}\right)  & =\left\{  \left(  \alpha,-t_{2},y_{1}%
,y_{2}\right)  :\alpha\neq-t_{1}\wedge y_{2}\geq0\right\}  ,\\
Q_{4}\left(  t_{1},t_{2}\right)  & =\left\{  \left(  -t_{1},\beta,y_{1}%
,y_{2}\right)  :\beta\neq-t_{2}\wedge y_{1}\geq0\right\}  .
\end{array}
\]
From Theorem \ref{theovector2} and the inclusion $\mathcal{L}_{+}%
(S,K)\subset\mathcal{L}_{+}^{w}(S,K),$ one has
\[
\operatorname*{epi}{}_{K}(F+I_{A})^{\ast}\subset\bigcup_{T\in\mathcal{L}%
_{+}^{w}(S,K)}\operatorname*{epi}{}_{K}(F+I_{C}+T\circ G)^{\ast}.
\]
Next we show that this inclusion might be strict under the assumptions of
Theorem \ref{theovector2}. Indeed,%
\[%
\begin{array}
[c]{ll}%
\left(  1,0,0,-1\right)  & \in Q_{1}\left(  -1,0\right)  \diagdown\left(
{\displaystyle\bigcup\limits_{i=1}^{5}}
P_{i},\right) \\
& \subset\left[  \bigcup\limits_{T\in\mathcal{L}_{+}^{w}(S,K)}%
\operatorname*{epi}{}_{K}(F+I_{C}+T\circ G)^{\ast}\right]  \diagdown
\operatorname*{epi}{}_{K}(F+I_{A})^{\ast}.
\end{array}
\]

\end{example}

The rest of this section is devoted to derive representations of
$\operatorname*{epi}{}_{K}(F+I_{A})^{\ast}$ where the set
\[
\mathcal{L}_{+}^{w}(S,K)=\{T\in\mathcal{L}(X,Y):T(S)\cap(-\operatorname*{int}%
K)=\emptyset\}
\]
replaces $\mathcal{L}_{+}(S,K)$ as index set at the right-hand side union of
sets.\medskip

\begin{lemma}
\label{lem2.1} One has
\begin{equation}
\operatorname*{epi}{}_{K}(F+I_{A})^{\ast}\supset\bigcap_{v\in I_{-S}^{\ast
}(T)}\big[\operatorname*{epi}{}_{K}(F+I_{C}+T\circ G)^{\ast}%
+(0\mathbf{{_{\mathcal{L}}}},v)\big],\quad\forall T\in\mathcal{L}_{+}%
^{w}(S,K).\label{eq3}%
\end{equation}

\end{lemma}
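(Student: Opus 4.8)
The plan is to fix $T\in\mathcal{L}_{+}^{w}(S,K)$, take an arbitrary $(L,y)$ in the right-hand side of \eqref{eq3}, and prove $(L,y)\in\operatorname*{epi}{}_{K}(F+I_{A})^{\ast}$. The first step is to translate both memberships via Lemma \ref{rem1.1}. Membership of $(L,y)$ in the right-hand side of \eqref{eq3} means exactly that for \emph{every} $v\in I_{-S}^{\ast}(T)$ one has $(L,y-v)\in\operatorname*{epi}{}_{K}(F+I_{C}+T\circ G)^{\ast}$; hence, by Lemma \ref{rem1.1} applied to $F+I_{C}+T\circ G$ and evaluated at a point $x\in C\cap\operatorname{dom}F\cap\operatorname{dom}G$ (at which $(F+I_{C}+T\circ G)(x)=F(x)+T(G(x))$), we get
\[
(y-v)-L(x)+F(x)+T(G(x))\notin-\operatorname*{int}K,\qquad\text{for all such }x\text{ and all }v\in I_{-S}^{\ast}(T).
\]
On the other side, since $I_{A}$ takes the value $+\infty_{Y}$ on $X\setminus A$, Lemma \ref{rem1.1} shows $(L,y)\in\operatorname*{epi}{}_{K}(F+I_{A})^{\ast}$ is equivalent to $y-L(x)+F(x)\notin-\operatorname*{int}K$ for every $x\in A\cap\operatorname{dom}F$. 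Thus it suffices to fix $x\in A\cap\operatorname{dom}F$, suppose for contradiction that $a:=y-L(x)+F(x)\in-\operatorname*{int}K$, and exhibit a $v\in I_{-S}^{\ast}(T)$ contradicting the displayed inclusion above.

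The crucial step is the choice of $v$. As $x\in A=C\cap G^{-1}(-S)$ we have $G(x)\in-S$, so $T(G(x))\in T(-S)=:M$. Since $T\in\mathcal{L}_{+}^{w}(S,K)=\operatorname{dom}I_{-S}^{\ast}$ (Lemma \ref{lem4.1}), $M$ is a nonempty subset of $Y$ with $\operatorname*{WSup}M\neq\{+\infty_{Y}\}$, so \eqref{1.3} together with \eqref{2.3} yields $M-\operatorname*{int}K=\operatorname*{WSup}M-\operatorname*{int}K=I_{-S}^{\ast}(T)-\operatorname*{int}K$. Because $a\in-\operatorname*{int}K$ we get $T(G(x))+a\in T(G(x))-\operatorname*{int}K\subset M-\operatorname*{int}K=I_{-S}^{\ast}(T)-\operatorname*{int}K$, so there exist $v\in I_{-S}^{\ast}(T)$ and $k\in\operatorname*{int}K$ with $T(G(x))+a=v-k$. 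Then
\[
(y-v)-L(x)+F(x)+T(G(x))=a-v+T(G(x))=-k\in-\operatorname*{int}K,
\]
which contradicts the inclusion extracted from the hypothesis. Hence $y-L(x)+F(x)\notin-\operatorname*{int}K$ for all $x\in A\cap\operatorname{dom}F$, i.e. $(L,y)\in\operatorname*{epi}{}_{K}(F+I_{A})^{\ast}$, and \eqref{eq3} follows.

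The main obstacle is not conceptual but a matter of care with the enlarged spaces. One must first check that $I_{-S}^{\ast}(T)$ is a genuine subset of $Y$ — so that $y-v$ and the $\operatorname*{WSup}$-arithmetic in \eqref{1.3} are meaningful — which follows from $T\in\operatorname{dom}I_{-S}^{\ast}$ (ruling out $+\infty_{Y}$ via \eqref{wsup=infty}) and from $0_{Y}=T(0_{Z})\in M$ (ruling out $-\infty_{Y}$); in particular $I_{-S}^{\ast}(T)\neq\emptyset$, so the intersection in \eqref{eq3} is not vacuous. One also has to handle consistently the cases $F(x)=+\infty_{Y}$ and the $+\infty_{Y}$-values of $I_{A}$, $I_{C}$ and $T\circ G$ when invoking Lemma \ref{rem1.1}, but in all of these the relevant vector is $+\infty_{Y}\notin-\operatorname*{int}K$, so they cause no trouble. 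Once this bookkeeping is done, the whole content of the lemma is precisely the identity $M-\operatorname*{int}K=\operatorname*{WSup}M-\operatorname*{int}K$ of \eqref{1.3}, which is what lets us replace $T(G(x))$ by an element of $I_{-S}^{\ast}(T)$ and thereby pass from the family of hypotheses (one per $v$) to a statement involving $F$ and $A$ only.
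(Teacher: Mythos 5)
Your proof is correct and follows essentially the same route as the paper: both translate membership via Lemma \ref{rem1.1}, use \eqref{2.3} together with \eqref{1.3} to identify $I_{-S}^{\ast}(T)-\operatorname*{int}K$ with $T(-S)-\operatorname*{int}K$, and then exploit $G(x)\in-S$ for $x\in A$ by specializing to $(T\circ G)(x)\in T(-S)$. The only difference is presentational — you argue by contradiction where the paper runs a chain of equivalences and then substitutes $u=(T\circ G)(x)$ — so there is nothing substantive to add.
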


\begin{proof}
Take arbitrarily $T\in\mathcal{L}_{+}^{w}(S,K)$ and
\[
(L,y)\in\bigcap\limits_{v\in I_{-S}^{\ast}(T)}\big[\operatorname*{epi}{}%
_{K}(F+I_{C}+T\circ G)^{\ast}+(0\mathbf{{_{\mathcal{L}}}},v)\big].
\]
Then,
\[
(L,y-v)\in\operatorname*{epi}{}_{K}(F+I_{C}+T\circ G)^{\ast},\quad\forall v\in
I_{-S}^{\ast}(T),
\]
and, by Lemma \ref{rem1.1}, (\ref{2.3}) and \eqref{1.3}, the last inclusion is
equivalent to
\begin{align}
&  y-v-L(x)+F(x)+\left(  T\circ G\right)  (x)\notin-\operatorname*{int}%
K,\quad\forall x\in C,\;\forall v\in I_{-S}^{\ast}(T)\nonumber\\
\Longleftrightarrow\; &  y-L(x)+F(x)+\left(  T\circ G\right)  (x)\notin
I_{-S}^{\ast}(T)-\operatorname*{int}K,\quad\forall x\in C\nonumber\\
\Longleftrightarrow\; &  y-L(x)+F(x)+\left(  T\circ G\right)  (x)\notin
\operatorname*{WSup}T(-S)-\operatorname*{int}K,\quad\forall x\in C\nonumber\\
{\Longleftrightarrow}\; &  y-L(x)+F(x)+\left(  T\circ G\right)  (x)\notin
T(-S)-\operatorname*{int}K,\quad\forall x\in C\qquad\nonumber\\
\Longleftrightarrow\; &  y-L(x)+F(x)\notin u-\left(  T\circ G\right)
(x)-\operatorname*{int}K,\quad\forall u\in T(-S),\;\forall x\in C.\label{19}%
\end{align}
Now, for any $x\in A$, taking $u=\left(  T\circ G\right)  (x)$ in \eqref{19}
(note that $x\in A$ yields $G(x)\in-S$), we get $y-L(x)+F(x)\notin
-\operatorname*{int}K$. Hence, again by Lemma \ref{rem1.1}, we obtain
$(L,y)\in\operatorname*{epi}{}_{K}(F+I_{A})^{\ast}$ and \eqref{eq3} follows.
\medskip
\end{proof}

\begin{lemma}
\label{lem2.3} If $T\in\mathcal{L}_{+}(S,K)$ then
\begin{equation}
\operatorname*{epi}{}_{K}(F+I_{C}+T\circ G)^{\ast}=\bigcap_{v\in I_{-S}^{\ast
}(T)}[\operatorname*{epi}{}_{K}(F+I_{C}+T\circ G)^{\ast}%
+(0\mathbf{{_{\mathcal{L}}}},v)].\label{pt29}%
\end{equation}

\end{lemma}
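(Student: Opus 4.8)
The plan is to make the intersection on the right collapse onto $\operatorname*{epi}{}_{K}(F+I_{C}+T\circ G)^{\ast}$ by first identifying the index set $I_{-S}^{\ast}(T)$ and then transferring everything, through Lemma \ref{rem1.1}, into assertions that certain vectors do not belong to $-\operatorname*{int}K$. The first step is to exploit the hypothesis $T\in\mathcal{L}_{+}(S,K)$: it gives $T(S)\subset K$, hence $T(-S)\subset-K$, while $0_{Y}=T(0_{Z})\in T(-S)$; so Lemma \ref{lem1.1}$(iii)$ applies with $M:=T(-S)$ and yields $I_{-S}^{\ast}(T)=\operatorname*{WSup}T(-S)=\operatorname*{WSup}(-K)$. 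Since $K$ is pointed, $(-K)\cap\operatorname*{int}K=\emptyset$, so Lemma \ref{lem1.1}$(i)$ gives $\operatorname*{WSup}(-K)\neq\{+\infty_{Y}\}$; then \eqref{formularwsup}, together with $-K-\operatorname*{int}K=-\operatorname*{int}K$ (a consequence of \eqref{marco1}) and $\operatorname*{cl}(\operatorname*{int}K)=K$, yields $\operatorname*{WSup}(-K)=(-K)\setminus(-\operatorname*{int}K)$. The only consequences I actually need are $I_{-S}^{\ast}(T)\subset-K$ and $0_{Y}\in I_{-S}^{\ast}(T)$ (the latter being the statement $\operatorname*{SMax}I_{-S}^{\ast}(T)=\{0_{Y}\}$ already obtained in the proof of Lemma \ref{lem4.1}).

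The inclusion $\supseteq$ is then immediate: because $0_{Y}\in I_{-S}^{\ast}(T)$, the term of the intersection corresponding to $v=0_{Y}$ is $\operatorname*{epi}{}_{K}(F+I_{C}+T\circ G)^{\ast}+(0\mathbf{{_{\mathcal{L}}}},0_{Y})=\operatorname*{epi}{}_{K}(F+I_{C}+T\circ G)^{\ast}$, so the intersection is contained in it. For $\subseteq$, write $H:=F+I_{C}+T\circ G$, take $(L,y)\in\operatorname*{epi}{}_{K}H^{\ast}$, and fix $v\in I_{-S}^{\ast}(T)$; the goal is $(L,y-v)\in\operatorname*{epi}{}_{K}H^{\ast}$, i.e. $(L,y)\in\operatorname*{epi}{}_{K}H^{\ast}+(0\mathbf{{_{\mathcal{L}}}},v)$. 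By Lemma \ref{rem1.1} the hypothesis reads $y-L(x)+H(x)\notin-\operatorname*{int}K$ for every $x\in X$, and the claim is $y-v-L(x)+H(x)\notin-\operatorname*{int}K$ for every $x\in X$. For $x\notin\operatorname{dom}H$ the left-hand side equals $+\infty_{Y}\notin-\operatorname*{int}K$, so only $x\in\operatorname{dom}H$ matters; for such $x$ set $a:=y-L(x)+H(x)\in Y$ (so $a\notin-\operatorname*{int}K$) and argue by contradiction: if $a-v\in-\operatorname*{int}K$, then, since $v\in-K$,
\[
a=(a-v)+v\in-\operatorname*{int}K-K=-(\operatorname*{int}K+K)=-\operatorname*{int}K
\]
by \eqref{marco1}, contradicting $a\notin-\operatorname*{int}K$. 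Hence $a-v\notin-\operatorname*{int}K$, so Lemma \ref{rem1.1} yields $(L,y-v)\in\operatorname*{epi}{}_{K}H^{\ast}$; as $v\in I_{-S}^{\ast}(T)$ was arbitrary, $(L,y)$ belongs to the intersection.

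I do not expect a real obstacle; the one delicate point is the first paragraph, and specifically the two facts $I_{-S}^{\ast}(T)\subset-K$ and $0_{Y}\in I_{-S}^{\ast}(T)$. This is exactly where the full hypothesis $T\in\mathcal{L}_{+}(S,K)$ is needed rather than the weaker $T\in\mathcal{L}_{+}^{w}(S,K)$ that sufficed for Lemma \ref{lem2.1}: for a merely weakly positive $T$ the set $\operatorname*{WSup}T(-S)$ need not be contained in $-K$, and the contradiction step in the proof of $\subseteq$ would then break down.
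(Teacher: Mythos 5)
Your proof is correct and takes essentially the same route as the paper's: both directions hinge on the same two facts, $0_{Y}\in I_{-S}^{\ast}(T)$ (giving the $v=0_{Y}$ term and hence one inclusion) and $I_{-S}^{\ast}(T)\subset-K$ (giving the other). The only cosmetic difference is that the paper absorbs the shift by $v$ directly via $y-v\in(F+I_{C}+T\circ G)^{\ast}(L)+K+K=(F+I_{C}+T\circ G)^{\ast}(L)+K$, whereas you pass through Lemma \ref{rem1.1} and the identity $\operatorname*{int}K+K=\operatorname*{int}K$; both are sound.
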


\begin{proof}
Assume that $T\in\mathcal{L}_{+}(S,K)$. One has $T(-S)\subset-K$ and $0_{Y}\in
T(-S)$ (as $0_{Y}=T(0_{X})$). So, by Definition \ref{def1} and (\ref{2.3}),
$0_{Y}\in\operatorname*{WSup}T(-S)=I_{-S}^{\ast}(T)$. Hence,
$\operatorname*{epi}{}_{K}(F+I_{C}+T\circ G)^{\ast}$ is a member of the
collection in the right-hand side of \eqref{pt29}, and we obtain
\[
\bigcap_{v\in I_{-S}^{\ast}(T)}\Big[\operatorname*{epi}{}_{K}(F+I_{C}+T\circ
G)^{\ast}+(0\mathbf{{_{\mathcal{L}}}},v)\Big]\subset\operatorname*{epi}{}%
_{K}(F+I_{C}+T\circ G)^{\ast}.
\]

Conversely, take an arbitrary $(L,y)\in\operatorname*{epi}{}_{K}%
(F+I_{C}+T\circ G)^{\ast}.$ We will prove that $(L,y)\in\bigcap\limits_{v\in
I_{-S}^{\ast}(T)}[\operatorname*{epi}{}_{K}(F+I_{C}+T\circ G)^{\ast
}+(0\mathbf{{_{\mathcal{L}}}},v)]$, or equivalently,
\[
(L,y)\in\operatorname*{epi}{}_{K}(F+I_{C}+T\circ G)^{\ast}%
+(0\mathbf{{_{\mathcal{L}}}},v),\quad\forall v\in I_{-S}^{\ast}(T).
\]
Since $T(-S)\subset-K$, it follows from \eqref{formularwsup} that
$\operatorname*{WSup}T(-S)\subset\operatorname*{cl}\left[
T(-S)-\operatorname*{int}K\right]  \subset-K$, and consequently $I_{-S}^{\ast
}(T)\subset-K$.

Since $(L,y)\in\operatorname*{epi}{}_{K}(F+I_{C}+T\circ G)^{\ast}$, one has
$y\in(F+I_{C}+T\circ G)^{\ast}(L)+K$. For any $v\in I_{-S}^{\ast}(T)$, as
$I_{-S}^{\ast}(T)\subset-K$, we get
\[
y-v\in(F+I_{C}+T\circ G)^{\ast}(L)+K+K=(F+I_{C}+T\circ G)^{\ast}(L)+K,
\]
which accounts for $(L,y)\in\operatorname*{epi}{}_{K}(F+I_{C}+T\circ G)^{\ast
}(L)+(0\mathbf{{_{\mathcal{L}}}},v)$ and we are done. \medskip
\end{proof}

\begin{proposition}
\label{pro2.1} One has
\begin{align*}
\operatorname*{epi}{}_{K}(F+I_{A})^{\ast}  & \supset\bigcup_{T\in
\mathcal{L}_{+}^{w}(S,K)}\left[  \bigcap_{v\in I_{-S}^{\ast}(T)}%
[\operatorname*{epi}{}_{K}(F+I_{C}+T\circ G)^{\ast}+(0\mathbf{{_{\mathcal{L}}%
}},v)]\right] \\
& \supset\bigcup_{T\in\mathcal{L}_{+}(S,K)}\operatorname*{epi}{}_{K}%
(F+I_{C}+T\circ G)^{\ast}.
\end{align*}

\end{proposition}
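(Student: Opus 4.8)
The plan is to establish the two inclusions of Proposition~\ref{pro2.1} separately, relying on the two preceding lemmas as the essential ingredients. Both inclusions are, in fact, consequences already packaged in Lemma~\ref{lem2.1} and Lemma~\ref{lem2.3}, so the proof is mostly a matter of chaining them correctly and observing that a union preserves each inclusion.

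First I would prove the upper inclusion. Fix an arbitrary $T\in\mathcal{L}_{+}^{w}(S,K)$. By Lemma~\ref{lem2.1},
\[
\bigcap_{v\in I_{-S}^{\ast}(T)}\big[\operatorname*{epi}{}_{K}(F+I_{C}+T\circ G)^{\ast}+(0\mathbf{{_{\mathcal{L}}}},v)\big]\subset\operatorname*{epi}{}_{K}(F+I_{A})^{\ast}.
\]
Since this holds for every such $T$, taking the union over $T\in\mathcal{L}_{+}^{w}(S,K)$ on the left-hand side preserves the inclusion, which yields exactly the first inclusion of the Proposition. Next I would prove the lower inclusion. Fix $T\in\mathcal{L}_{+}(S,K)$. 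Because $\mathcal{L}_{+}(S,K)\subset\mathcal{L}_{+}^{w}(S,K)$, the set $\bigcap_{v\in I_{-S}^{\ast}(T)}[\operatorname*{epi}{}_{K}(F+I_{C}+T\circ G)^{\ast}+(0\mathbf{{_{\mathcal{L}}}},v)]$ appears among the sets indexed in the middle union. Moreover, for $T\in\mathcal{L}_{+}(S,K)$, Lemma~\ref{lem2.3} gives
\[
\bigcap_{v\in I_{-S}^{\ast}(T)}[\operatorname*{epi}{}_{K}(F+I_{C}+T\circ G)^{\ast}+(0\mathbf{{_{\mathcal{L}}}},v)]=\operatorname*{epi}{}_{K}(F+I_{C}+T\circ G)^{\ast}.
\]
Hence, taking the union over $T\in\mathcal{L}_{+}(S,K)$, the right-hand union $\bigcup_{T\in\mathcal{L}_{+}(S,K)}\operatorname*{epi}{}_{K}(F+I_{C}+T\circ G)^{\ast}$ is contained in the middle union, which is the second inclusion.

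I do not expect any genuine obstacle here, since both hard steps have been isolated into Lemmas~\ref{lem2.1} and~\ref{lem2.3}. The only point demanding minor care is making sure that the index set $\mathcal{L}_{+}(S,K)$ is indeed a subset of $\mathcal{L}_{+}^{w}(S,K)$ (already observed in Section~2) so that the middle union, taken over the larger family $\mathcal{L}_{+}^{w}(S,K)$, really does dominate term-by-term the family in the bottom line; and that, via Lemma~\ref{lem2.3}, each summand of the bottom union coincides with the corresponding summand of the middle union. Once these bookkeeping facts are stated, the proof concludes in a few lines.
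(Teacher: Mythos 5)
Your proof is correct and follows exactly the paper's argument: the first inclusion is Lemma~\ref{lem2.1} combined with taking the union over $T\in\mathcal{L}_{+}^{w}(S,K)$, and the second follows from the inclusion $\mathcal{L}_{+}(S,K)\subset\mathcal{L}_{+}^{w}(S,K)$ together with Lemma~\ref{lem2.3}. No issues.
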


\begin{proof}
The first inclusion follows from Lemma \ref{lem2.1} while the second one
follows from the fact that $\mathcal{L}_{+}(S,K)\subset\mathcal{L}_{+}%
^{w}(S,K)$ and Lemma \ref{lem2.3}. \medskip
\end{proof}

\begin{theorem}
[2nd asymptotic representation of $\operatorname*{epi}{}_{K}(F+I_{A})^{\ast}$%
]\label{theovectorw} Assume all the assumptions of Theorem \ref{theo2} hold.
Then,
\[
\operatorname*{epi}{}_{K}(F+I_{A})^{\ast}=\operatorname*{cl}\left\{
\bigcup_{T\in\mathcal{L}_{+}^{w}(S,K)}\left[  \bigcap_{v\in I_{-S}^{\ast}%
(T)}[\operatorname*{epi}{}_{K}(F+I_{C}+T\circ G)^{\ast}%
+(0\mathbf{{_{\mathcal{L}}}},v)]\right]  \right\}  .
\]

\end{theorem}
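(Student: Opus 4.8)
The plan is a short sandwiching argument, relying on the fact that the hard analytic work has already been carried out in Theorem \ref{theo2} and in Proposition \ref{pro2.1}; no scalarization or separation is needed here. Write
\[
W:=\bigcup_{T\in\mathcal{L}_{+}^{w}(S,K)}\left[  \bigcap_{v\in I_{-S}^{\ast
}(T)}[\operatorname*{epi}{}_{K}(F+I_{C}+T\circ G)^{\ast}
+(0\mathbf{{_{\mathcal{L}}}},v)]\right]
\]
for the set appearing under the closure in the statement, and
\[
V:=\bigcup_{T\in\mathcal{L}_{+}(S,K)}\operatorname*{epi}{}_{K}(F+I_{C}+T\circ
G)^{\ast}
\]
for the corresponding union over $\mathcal{L}_{+}(S,K)$. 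The first step is simply to record, from Proposition \ref{pro2.1}, the chain of inclusions
\[
V\subset W\subset\operatorname*{epi}{}_{K}(F+I_{A})^{\ast}.
\]

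Next I would verify that $F+I_{A}$ is a proper mapping from $X$ into $Y\cup\{+\infty_{Y}\}$: its domain is $A\cap\operatorname{dom}F$, which is nonempty by hypothesis, and since $F$ never takes the value $-\infty_{Y}$ and $I_{A}$ takes values only in $\{0_{Y},+\infty_{Y}\}$, the sum never equals $-\infty_{Y}$. Hence Lemma \ref{lem4} applies and $\operatorname*{epi}{}_{K}(F+I_{A})^{\ast}$ is closed in $\mathcal{L}(X,Y)\times Y$. Applying the closure operator to the chain above, and using its monotonicity together with the closedness of $\operatorname*{epi}{}_{K}(F+I_{A})^{\ast}$, one obtains
\[
\operatorname*{cl}V\subset\operatorname*{cl}W\subset\operatorname*{epi}{}
_{K}(F+I_{A})^{\ast}.
\]

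Finally, since the hypotheses assumed here are exactly those of Theorem \ref{theo2}, that theorem gives $\operatorname*{cl}V=\operatorname*{epi}{}_{K}(F+I_{A})^{\ast}$. Substituting this into the previous display squeezes $\operatorname*{cl}W$ between $\operatorname*{epi}{}_{K}(F+I_{A})^{\ast}$ and itself, so $\operatorname*{cl}W=\operatorname*{epi}{}_{K}(F+I_{A})^{\ast}$, which is the asserted equality. There is essentially no obstacle in this argument beyond invoking the earlier results correctly; the only point demanding a moment's attention is the properness check needed to apply Lemma \ref{lem4}, and the observation that the closure of the ``intermediate'' set $W$ is pinned down from both sides because $V\subset W\subset\operatorname*{epi}{}_{K}(F+I_{A})^{\ast}$ while the outer set is closed and the closure of the inner one already fills it.
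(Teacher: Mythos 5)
Your proposal is correct and follows essentially the same route as the paper: both arguments sandwich the closure of the $\mathcal{L}_{+}^{w}(S,K)$-indexed set between $\operatorname*{cl}V$ and $\operatorname*{epi}{}_{K}(F+I_{A})^{\ast}$ using Proposition \ref{pro2.1} and Lemma \ref{lem4}, and then invoke Theorem \ref{theo2} to force equality. Your explicit properness check for $F+I_{A}$ before applying Lemma \ref{lem4} is a small, harmless addition the paper leaves implicit.
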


\begin{proof}
It follows from Lemma \ref{lem4} and Proposition \ref{pro2.1} that
\begin{align*}
\operatorname*{epi}{}_{K}(F+I_{A})^{\ast}  & \supset\operatorname*{cl}\left\{
\bigcup_{T\in\mathcal{L}_{+}^{w}(S,K)}\left[  \bigcap_{v\in I_{-S}^{\ast}%
(T)}[\operatorname*{epi}{}_{K}(F+I_{C}+T\circ G)^{\ast}%
+(0\mathbf{{_{\mathcal{L}}}},v)]\right]  \right\} \\
& \supset\operatorname*{cl}\left\{  \bigcup_{T\in\mathcal{L}_{+}%
(S,K)}\operatorname*{epi}{}_{K}(F+I_{C}+T\circ G)^{\ast}\right\}  ,
\end{align*}
and the conclusion now follows from Theorem \ref{theo2}. \medskip
\end{proof}

\begin{theorem}
[2nd non-asymptotic representation of $\operatorname*{epi}{}_{K}%
(F+I_{A})^{\ast}$]\label{theovectorw2} Assume all the assumptions of Theorem
\ref{theovector2}. Then
\[
\operatorname*{epi}{}_{K}(F+I_{A})^{\ast}=\bigcup_{T\in\mathcal{L}_{+}%
^{w}(S,K)}\left[  \bigcap_{v\in I_{-S}^{\ast}(T)}[\operatorname*{epi}{}%
_{K}(F+I_{C}+T\circ G)^{\ast}+(0\mathbf{{_{\mathcal{L}}}},v)]\right]  .
\]

\end{theorem}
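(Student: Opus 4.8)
The plan is to observe that this statement is an immediate consequence of Proposition \ref{pro2.1} combined with the non-asymptotic identity of Theorem \ref{theovector2}; no new argument is needed, only a sandwiching of inclusions. Concretely, Proposition \ref{pro2.1} supplies, with no extra hypotheses, the chain
\[
\operatorname*{epi}{}_{K}(F+I_{A})^{\ast}\ \supset\ \bigcup_{T\in\mathcal{L}_{+}^{w}(S,K)}\left[\ \bigcap_{v\in I_{-S}^{\ast}(T)}\big[\operatorname*{epi}{}_{K}(F+I_{C}+T\circ G)^{\ast}+(0\mathbf{{_{\mathcal{L}}}},v)\big]\right]\ \supset\ \bigcup_{T\in\mathcal{L}_{+}(S,K)}\operatorname*{epi}{}_{K}(F+I_{C}+T\circ G)^{\ast}.
\]
Under the standing hypotheses (those of Theorem \ref{theovector2}, i.e.\ $A\cap\operatorname{dom}F\neq\emptyset$ together with one of $(c_{1})$, $(c_{2})$, $(c_{3})$), Theorem \ref{theovector2} asserts that the last union coincides with $\operatorname*{epi}{}_{K}(F+I_{A})^{\ast}$. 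Hence every inclusion in the chain is forced to be an equality, and in particular the middle term equals $\operatorname*{epi}{}_{K}(F+I_{A})^{\ast}$, which is precisely the asserted formula.

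This result is the non-asymptotic counterpart of Theorem \ref{theovectorw}, and its proof mirrors the latter: one simply replaces the appeal to the asymptotic identity of Theorem \ref{theo2} by the qualified identity of Theorem \ref{theovector2}, and the closure operation present in Theorem \ref{theovectorw} then disappears, precisely because one of the qualification conditions $(c_{1})$--$(c_{3})$ permits the limiting process to be dispensed with. There is essentially no obstacle here: the only \emph{content} is the observation that the stronger, closure-free equality of Theorem \ref{theovector2} propagates verbatim through the two inclusions of Proposition \ref{pro2.1}. For a fully self-contained write-up one could instead re-run the proof of Theorem \ref{theovectorw}, inserting Theorem \ref{theovector2} in place of Theorem \ref{theo2} at the single step where the closure is introduced, but the sandwich phrasing above is shorter.
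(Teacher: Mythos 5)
Your proposal is correct and coincides with the paper's own proof: both invoke Proposition \ref{pro2.1} for the double inclusion and then sandwich it with the closure-free equality of Theorem \ref{theovector2} to force equality throughout. Nothing further is needed.
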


\begin{proof}
By Proposition \ref{pro2.1},
\begin{align*}
\operatorname*{epi}{}_{K}(F+I_{A})^{\ast}  & \supset\bigcup_{T\in
\mathcal{L}_{+}^{w}(S,K)}\left[  \bigcap_{v\in I_{-S}^{\ast}(T)}%
[\operatorname*{epi}{}_{K}(F+I_{C}+T\circ G)^{\ast}+(0\mathbf{{_{\mathcal{L}}%
}},v)]\right] \\
\  & \supset\bigcup_{T\in\mathcal{L}_{+}(S,K)}\operatorname*{epi}{}%
_{K}(F+I_{C}+T\circ G)^{\ast}.
\end{align*}
The conclusion now follows from this double inclusion and Theorem
\ref{theovector2}. \medskip
\end{proof}

\section{Farkas-type results for vector-valued functions}

Let $X,$ $Y,$ $Z,$ $F,$ $G,$ $C,$ and $A$ be as in Section 3. We also assume
that $A\cap\operatorname*{dom}F\not =\emptyset.$

This section provides stable reverse Farkas-type results in the sense of
\cite{DGLM16} for the \emph{constraint system }of\emph{\ }\textrm{(VOP):}
\[
\{x\in C,\;G(x)\in-S\}\equiv\{x\in C,\ G(x)\leqq_{S}0_{Z}\}.
\]

We first recall a general result which will be useful in the sequel.\medskip

\begin{lemma}
\cite[Theorem 4.5]{DGLM16} \label{lemFV} The following statements are
equivalent:\medskip\newline$(a)$ $\operatorname*{epi}_{K}(F+I_{A})^{\ast
}=\bigcup\limits_{T\in\mathcal{L}_{+}(S,K)}\operatorname*{epi}_{K}%
(F+I_{C}+T\circ G)^{\ast};$\newline$(b)$ For any $y\in Y$ and any
$L\in\mathcal{L}(X,Y)$, the following assertions are equivalent:%
\[%
\begin{array}
[c]{l}%
(b_{1})\text{ }G(x)\in-S,\;x\in C\;\Longrightarrow\;F(x)-L(x)+y\notin
-\operatorname*{int}K;\\
(b_{2})\text{ }\exists T\in\mathcal{L}_{+}(S,K)\text{ such that }F(x)+(T\circ
G)(x)-L(x)+y\notin-\operatorname*{int}K\;\forall x\in C.
\end{array}
\]
$\qquad$\newline
\end{lemma}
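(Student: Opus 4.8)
The plan is to reduce both sides of $(a)$ to the pointwise descriptions of $K$-epigraphs of conjugates furnished by Lemma \ref{rem1.1}, after which each of the two implications becomes a matter of matching quantifiers, with no estimate to perform.

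First I would rewrite the left-hand side of $(a)$. By Lemma \ref{rem1.1} applied with $F+I_{A}$ in place of $F$ (a proper map, since $F$ is proper and $A\cap\operatorname*{dom}F\neq\emptyset$), $(L,y)\in\operatorname*{epi}_{K}(F+I_{A})^{\ast}$ holds if and only if $y-L(x)+(F+I_{A})(x)\notin-\operatorname*{int}K$ for every $x\in X$. By the conventions \eqref{conv}, when $x\notin A$ one has $(F+I_{A})(x)=+\infty_{Y}$, so $y-L(x)+(F+I_{A})(x)=+\infty_{Y}\notin-\operatorname*{int}K$ automatically; thus only the points of $A$ count, and for such $x$ we have $(F+I_{A})(x)=F(x)$ (the condition still holding vacuously when $F(x)=+\infty_{Y}$). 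Since $x\in A$ means exactly $x\in C$ and $G(x)\in-S$, this is precisely assertion $(b_{1})$. In the same way, for a fixed $T\in\mathcal{L}_{+}(S,K)$, Lemma \ref{rem1.1} gives that $(L,y)\in\operatorname*{epi}_{K}(F+I_{C}+T\circ G)^{\ast}$ iff $y-L(x)+(F+I_{C}+T\circ G)(x)\notin-\operatorname*{int}K$ for all $x\in X$, which, discarding as before the points outside $C$ (and the points where $F(x)=+\infty_{Y}$ or $G(x)=+\infty_{Z}$, at which the expression equals $+\infty_{Y}$), is exactly $F(x)+(T\circ G)(x)-L(x)+y\notin-\operatorname*{int}K$ for all $x\in C$. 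Taking the union over $T\in\mathcal{L}_{+}(S,K)$, membership of $(L,y)$ in the right-hand side of $(a)$ is thus equivalent to assertion $(b_{2})$.

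With these two reformulations, condition $(a)$ says precisely that for every $(L,y)\in\mathcal{L}(X,Y)\times Y$, membership of $(L,y)$ in the left-hand set is equivalent to its membership in the right-hand set, i.e. that $(b_{1})\Leftrightarrow(b_{2})$ for all $y\in Y$ and all $L\in\mathcal{L}(X,Y)$, which is $(b)$; conversely, $(b)$ forces the two sets in $(a)$ to contain the same elements, hence to be equal. It may be noted that the implication $(b_{2})\Rightarrow(b_{1})$ is automatic: if $(b_{2})$ holds via $T\in\mathcal{L}_{+}(S,K)$ and $x\in A$, then $(T\circ G)(x)=T(G(x))\in T(-S)\subset-K$, so $F(x)-L(x)+y\in-\operatorname*{int}K$ would force $F(x)+(T\circ G)(x)-L(x)+y\in-\operatorname*{int}K+(-K)=-(\operatorname*{int}K+K)=-\operatorname*{int}K$ by \eqref{marco1}, contradicting $(b_{2})$; this mirrors the inclusion $\supset$ in $(a)$, already recorded via \cite[Lemma 4.1]{DGLM16}.

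Consequently the whole content of the equivalence is concentrated in the implication $(b_{1})\Rightarrow(b_{2})$, equivalently in the inclusion $\operatorname*{epi}_{K}(F+I_{A})^{\ast}\subset\bigcup_{T\in\mathcal{L}_{+}(S,K)}\operatorname*{epi}_{K}(F+I_{C}+T\circ G)^{\ast}$, and the present statement is merely the dictionary between that inclusion and the stable Farkas alternative. I expect the only delicate point to be the bookkeeping with the adjoined greatest element $+\infty_{Y}$ when passing from the universal quantifier over $x\in X$ to the one over $x\in A$ (respectively over $x\in C$); once that is carried out carefully, the proof is complete, which is why the result needs no qualification condition.
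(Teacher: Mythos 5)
Your proof is correct, and no comparison with an internal argument is possible because the paper does not prove Lemma \ref{lemFV}: it is imported verbatim from \cite[Theorem 4.5]{DGLM16}. Your route --- translating membership of $(L,y)$ in each $K$-epigraph into the corresponding universally quantified non-membership condition via Lemma \ref{rem1.1}, discarding the points where the value is $+\infty_{Y}$ by the conventions \eqref{conv}, and then reading the set equality in $(a)$ as exactly the pointwise equivalence $(b_{1})\Leftrightarrow(b_{2})$ over all $(L,y)\in\mathcal{L}(X,Y)\times Y$ --- is precisely the mechanism the paper itself employs when proving the analogous equivalence $(c)\Leftrightarrow(d)$ in Theorem \ref{lemFVw} for $\mathcal{L}_{+}^{w}(S,K)$, so your self-contained argument is faithful to the paper's methods. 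Your side remark that $(b_{2})\Rightarrow(b_{1})$ holds unconditionally, since $(T\circ G)(x)\in T(-S)\subset-K$ and $K+\operatorname*{int}K=\operatorname*{int}K$ by \eqref{marco1}, is also correct and corresponds to the inclusion $\supset$ in $(a)$ recorded in \cite[Lemma 4.1]{DGLM16}.
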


\begin{theorem}
[1st characterization of Farkas lemma]\label{theoFV1} Let $C$ be a nonempty
closed convex subset of $X$, $F\colon\ X\rightarrow Y\cup\{+\infty_{Y}\}$ be a
proper $K$-convex mapping satisfying $y^{\ast}\circ F$ is lsc for all
$y^{\ast}\in Y^{\ast}$, and $G\colon\ X\rightarrow Z\cup\{+\infty_{Z}\}$ be a
proper $S$-convex and $S$-epi closed mapping. Then, the following statements
are equivalent:\newline$(a^{\prime})$ The set
\[
\bigcup\limits_{T\in\mathcal{L}_{+}(S,K)}\operatorname*{epi}\nolimits_{K}%
(F+I_{C}+T\circ G)^{\ast}%
\]
is closed in $\mathcal{L}(X,Y)\times Y$.\newline$(b)$ $\forall(L,y)\in
\mathcal{L}(X,Y)\times Y,\;(b_{1})\Longleftrightarrow(b_{2})$.
\end{theorem}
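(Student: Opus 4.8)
The plan is to show that Theorem \ref{theoFV1} is essentially a corollary of the machinery already assembled in Section 3, together with the general equivalence recorded in Lemma \ref{lemFV}. The key observation is that statement $(b)$ in Theorem \ref{theoFV1} is \emph{verbatim} the statement $(b)$ of Lemma \ref{lemFV}, and Lemma \ref{lemFV} tells us this is equivalent to the equality
\[
\operatorname*{epi}{}_{K}(F+I_{A})^{\ast}=\bigcup_{T\in\mathcal{L}_{+}(S,K)}\operatorname*{epi}{}_{K}(F+I_{C}+T\circ G)^{\ast}.
\]
So it suffices to prove that, under the standing hypotheses of Theorem \ref{theoFV1} (which are exactly the hypotheses of Theorem \ref{theo2}), this equality holds \emph{if and only if} the union on its right-hand side is a closed set, i.e. $(a')$ holds.

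First I would invoke Theorem \ref{theo2}, whose hypotheses coincide with those assumed here: $C$ nonempty closed convex, $F$ proper $K$-convex with $y^{\ast}\circ F$ lsc for all $y^{\ast}\in Y^{\ast}$, $G$ proper $S$-convex and $S$-epi closed, and $A\cap\operatorname{dom}F\neq\emptyset$ (the latter being part of the standing assumptions of Section 4). Theorem \ref{theo2} gives
\[
\operatorname*{epi}{}_{K}(F+I_{A})^{\ast}=\operatorname*{cl}\left[\bigcup_{T\in\mathcal{L}_{+}(S,K)}\operatorname*{epi}{}_{K}(F+I_{C}+T\circ G)^{\ast}\right].
\]
Now the argument is a two-line tautology. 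If $(a')$ holds, then the closure on the right is superfluous, so the set equals $\bigcup_{T}\operatorname*{epi}{}_{K}(F+I_{C}+T\circ G)^{\ast}$, which is statement $(a)$ of Lemma \ref{lemFV}, hence $(b)$ holds. Conversely, if $(b)$ holds, then by Lemma \ref{lemFV} the equality $(a)$ of that lemma holds, i.e. $\operatorname*{epi}{}_{K}(F+I_{A})^{\ast}=\bigcup_{T}\operatorname*{epi}{}_{K}(F+I_{C}+T\circ G)^{\ast}$; but by Lemma \ref{lem4} the left-hand side $\operatorname*{epi}{}_{K}(F+I_{A})^{\ast}$ is closed (here we use that $F+I_{A}$ is proper, which follows from $A\cap\operatorname{dom}F\neq\emptyset$ and $F$ proper), so the union is closed and $(a')$ holds.

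There is no real obstacle here; the content has been front-loaded into Theorem \ref{theo2}, Lemma \ref{lem4}, and Lemma \ref{lemFV}. The only point requiring a word of care is checking that $F+I_{A}$ is a proper mapping so that Lemma \ref{lem4} applies: $\operatorname{dom}(F+I_{A})=A\cap\operatorname{dom}F\neq\emptyset$ and $-\infty_{Y}\notin(F+I_{A})(X)$ since $F$ is proper and $I_{A}$ takes only the values $0_{Y}$ and $+\infty_{Y}$. I would write the proof in the order: (1) note $(b)$ is literally Lemma \ref{lemFV}$(b)$, so by Lemma \ref{lemFV} it is equivalent to the epigraph equality $(a)$ of that lemma; (2) apply Theorem \ref{theo2} to replace $\operatorname*{epi}{}_{K}(F+I_{A})^{\ast}$ by the closure of the union; (3) apply Lemma \ref{lem4} to record that $\operatorname*{epi}{}_{K}(F+I_{A})^{\ast}$ is closed; (4) conclude that the epigraph equality $(a)$ holds iff the union is closed, i.e. iff $(a')$ holds.
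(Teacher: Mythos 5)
Your proof is correct and follows essentially the same route as the paper: combine Theorem \ref{theo2} (which gives $\operatorname*{epi}_{K}(F+I_{A})^{\ast}=\operatorname*{cl}\bigl[\bigcup_{T\in\mathcal{L}_{+}(S,K)}\operatorname*{epi}_{K}(F+I_{C}+T\circ G)^{\ast}\bigr]$) with Lemma \ref{lemFV} to identify $(a^{\prime})$ with statement $(a)$ of that lemma. The only cosmetic difference is that in the converse direction you invoke Lemma \ref{lem4} for closedness of $\operatorname*{epi}_{K}(F+I_{A})^{\ast}$, whereas the paper gets it for free from the displayed closure identity; both are fine.
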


\begin{proof}
It follows from Theorem \ref{theo2} that
\[
\operatorname*{epi}{}_{K}(F+I_{A})^{\ast}=\operatorname*{cl}\left[
\bigcup_{T\in\mathcal{L}_{+}(S,K)}\operatorname*{epi}{}_{K}(F+I_{C}+T\circ
G)^{\ast}\right]  .
\]
Hence, (a') is equivalent to $\operatorname*{epi}{}_{K}(F+I_{A})^{\ast
}=\bigcup\limits_{T\in\mathcal{L}_{+}(S,K)}\operatorname*{epi}{}_{K}%
(F+I_{C}+T\circ G)^{\ast}$ and the conclusion follows from Lemma \ref{lemFV}.
\medskip
\end{proof}

\begin{theorem}
[1st Farkas lemma]\label{theoFV2} Let $C$ be a nonempty convex subset of $X$,
$F\colon\ X\rightarrow Y\cup\{+\infty_{Y}\}$ be a proper $K$-convex mapping,
and $G\colon\ X\rightarrow Z\cup\{+\infty_{Z}\}$ be a proper $S$-convex
mapping. Assume that $A\cap\operatorname{dom}F\neq\emptyset$ and that one of
the conditions $(c_{1})$, $(c_{2})$ and $(c_{3})$ holds. Then for all
$(L,y)\in\mathcal{L}(X,Y)\times Y$, one has $(b_{1})\Longleftrightarrow
(b_{2})\mathrm{.}$
\end{theorem}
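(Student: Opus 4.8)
The statement is precisely the implication "$(a)\Rightarrow$ the equivalence of $(b_1)$ and $(b_2)$" from Lemma~\ref{lemFV}, applied once a suitable representation of $\operatorname*{epi}{}_K(F+I_A)^\ast$ is available. So the proof should be short and mirror the argument of Theorem~\ref{theoFV1}, the only change being that instead of assuming closedness of the union of epigraphs as a hypothesis, we now \emph{derive} the non-asymptotic representation from Theorem~\ref{theovector2}, whose qualification conditions $(c_1),(c_2),(c_3)$ are exactly the ones assumed here.

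\textbf{Key steps.} First I would invoke Theorem~\ref{theovector2}: its hypotheses are a nonempty convex $C$, a proper $K$-convex $F$, a proper $S$-convex $G$, together with $A\cap\operatorname{dom}F\neq\emptyset$ and one of $(c_1),(c_2),(c_3)$ --- all of which are in force here (note that the lsc condition on $y^\ast\circ F$ is only needed as part of $(c_2)$, and Theorem~\ref{theovector2} already folds it into that qualification condition). This yields
\[
\operatorname*{epi}{}_K(F+I_A)^\ast=\bigcup_{T\in\mathcal{L}_+(S,K)}\operatorname*{epi}{}_K(F+I_C+T\circ G)^\ast,
\]
which is exactly statement $(a)$ of Lemma~\ref{lemFV}. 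Then I would apply Lemma~\ref{lemFV}, whose equivalence $(a)\Leftrightarrow(b)$ gives at once that for every $(L,y)\in\mathcal{L}(X,Y)\times Y$ the assertions $(b_1)$ and $(b_2)$ are equivalent, which is the desired conclusion.

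\textbf{Main obstacle.} There is essentially no obstacle: the entire content has been pushed into Theorem~\ref{theovector2} and Lemma~\ref{lemFV}, so the proof is a two-line citation. The only point requiring a word of care is matching hypotheses --- in particular checking that Theorem~\ref{theoFV2} does not silently require something (like the lsc assumption on $y^\ast\circ F$) outside of what $(c_1),(c_2),(c_3)$ already provide; since each $(c_i)$ that needs it includes it, this is fine. Thus the proof reads:

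\begin{proof}
Since $A\cap\operatorname{dom}F\neq\emptyset$ and one of $(c_1)$, $(c_2)$, $(c_3)$ holds, Theorem \ref{theovector2} gives
\[
\operatorname*{epi}{}_K(F+I_A)^\ast=\bigcup_{T\in\mathcal{L}_+(S,K)}\operatorname*{epi}{}_K(F+I_C+T\circ G)^\ast,
\]
which is statement $(a)$ in Lemma \ref{lemFV}. By the equivalence $(a)\Leftrightarrow(b)$ in that lemma, for every $(L,y)\in\mathcal{L}(X,Y)\times Y$ the assertions $(b_1)$ and $(b_2)$ are equivalent.
\end{proof}
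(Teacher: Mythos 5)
Your proof is correct and follows exactly the paper's own route: invoke Theorem \ref{theovector2} under the qualification conditions $(c_1)$, $(c_2)$, $(c_3)$ to obtain the non-asymptotic representation of $\operatorname*{epi}{}_{K}(F+I_{A})^{\ast}$, i.e., statement $(a)$ of Lemma \ref{lemFV}, and then conclude $(b_{1})\Longleftrightarrow(b_{2})$ from that lemma. Your remark about the lsc condition on $y^{\ast}\circ F$ being folded into $(c_2)$ is a sensible hypothesis check and is consistent with the paper.
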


\begin{proof}
We get from Theorem \ref{theovector2} that
\[
\operatorname*{epi}{}_{K}(F+I_{A})^{\ast}=\bigcup\limits_{T\in\mathcal{L}%
_{+}(S,K)}\operatorname*{epi}{}_{K}(F+I_{C}+T\circ G)^{\ast}%
\]
and hence, the conclusion also follows from Lemma \ref{lemFV}. \medskip
\end{proof}

\begin{theorem}
[2nd characterization of Farkas lemma]\label{lemFVw} Assume all the
assumptions of Theorem \ref{theovector2}. Then the following statements are
equivalent:\medskip\newline$(c)$ $\operatorname*{epi}_{K}(F+I_{A})^{\ast
}=\bigcup\limits_{T\in\mathcal{L}_{+}^{w}(S,K)}\left[  \bigcap\limits_{v\in
I_{-S}^{\ast}(T)}[\operatorname*{epi}_{K}(F+I_{C}+T\circ G)^{\ast
}+(0_{\mathcal{L}},v)]\right]  .$\newline$(d)$ For any $y\in Y$ and any
$L\in\mathcal{L}(X,Y)$, the following assertions are equivalent:%
\[%
\begin{array}
[c]{l}%
(b_{1})\text{ }G(x)\in-S,\;x\in C\;\Longrightarrow\;F(x)-L(x)+y\notin
-\operatorname*{int}K;\\
(b_{3})\text{ }\exists T\in\mathcal{L}_{+}^{w}(S,K):F(x)+\left(  T\circ
G\right)  (x)-L(x)+y\notin T(-S)-\operatorname*{int}K,\;\forall x\in C.
\end{array}
\]
$\ \ \ \ \ \ \ \ \ \ $\newline
\end{theorem}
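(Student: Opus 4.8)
The plan is to derive the equivalence by translating both $(c)$ and $(d)$ into the \emph{same} family of pointwise assertions, indexed by pairs $(L,y)\in\mathcal{L}(X,Y)\times Y$, with Lemma~\ref{rem1.1} serving as a dictionary between ``$(L,\cdot)$ belongs to a $K$-epigraph of a conjugate'' and a ``$\notin-\operatorname*{int}K$'' condition. No use is actually made of the convexity or interior-point hypotheses inherited from Theorem~\ref{theovector2} in establishing $(c)\Leftrightarrow(d)$; they matter only in that, by Theorem~\ref{theovectorw2}, they ensure $(c)$, hence $(d)$, really holds. Concretely, I would first fix $(L,y)$ and apply Lemma~\ref{rem1.1} to the proper mapping $F+I_{A}$: after discarding the points $x\notin A\cap\operatorname{dom}F$ (for which, by the conventions on $+\infty_{Y}$, both the epigraph condition and $(b_{1})$ reduce to the vacuous $+\infty_{Y}\notin-\operatorname*{int}K$), one obtains
\[
(L,y)\in\operatorname*{epi}{}_{K}(F+I_{A})^{\ast}\ \Longleftrightarrow\ (b_{1})\text{ holds for this pair }(L,y).
\]

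Next, still with $(L,y)$ fixed, I would analyse membership in the set on the right of $(c)$. For a fixed $T\in\mathcal{L}_{+}^{w}(S,K)$, the relation $(L,y)\in\operatorname*{epi}{}_{K}(F+I_{C}+T\circ G)^{\ast}+(0_{\mathcal{L}},v)$ is just $(L,y-v)\in\operatorname*{epi}{}_{K}(F+I_{C}+T\circ G)^{\ast}$; intersecting over $v\in I_{-S}^{\ast}(T)$ and running the chain of equivalences displayed in the proof of Lemma~\ref{lem2.1} --- Lemma~\ref{rem1.1}, then \eqref{2.3} to write $I_{-S}^{\ast}(T)=\operatorname*{WSup}T(-S)$, then \eqref{1.3} to replace $\operatorname*{WSup}T(-S)-\operatorname*{int}K$ by $T(-S)-\operatorname*{int}K$ (legitimate because $T\in\mathcal{L}_{+}^{w}(S,K)=\operatorname{dom}I_{-S}^{\ast}$ by Lemma~\ref{lem4.1}, so $\operatorname*{WSup}T(-S)\neq\{+\infty_{Y}\}$) --- but stopping at the line preceding \eqref{19}, I get
\[
(L,y)\in\bigcap_{v\in I_{-S}^{\ast}(T)}\big[\operatorname*{epi}{}_{K}(F+I_{C}+T\circ G)^{\ast}+(0_{\mathcal{L}},v)\big]\ \Longleftrightarrow\ F(x)+(T\circ G)(x)-L(x)+y\notin T(-S)-\operatorname*{int}K\ \ \forall x\in C.
\]
Unioning over $T\in\mathcal{L}_{+}^{w}(S,K)$, this reads: $(L,y)$ lies in the right-hand side of $(c)$ if and only if $(b_{3})$ holds for this pair.

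With these two displayed equivalences in hand the theorem is immediate: $(c)$ is the assertion that the two sets $\operatorname*{epi}{}_{K}(F+I_{A})^{\ast}$ and $\bigcup_{T\in\mathcal{L}_{+}^{w}(S,K)}[\bigcap_{v}(\ldots)]$ have the same elements, i.e.\ that for every $(L,y)$ membership in the first is equivalent to membership in the second, which by the two steps above is precisely ``$(b_{1})\Leftrightarrow(b_{3})$ for every $(L,y)$'', i.e.\ $(d)$; and reading those two equivalences backwards turns $(d)$ into the set equality $(c)$. I expect the only delicate point --- the nearest thing to an obstacle --- to be the inner ``$\forall v\in I_{-S}^{\ast}(T)$'' step of the second equivalence: one must be sure that intersecting the translates $(0_{\mathcal{L}},v)$ over all $v\in I_{-S}^{\ast}(T)$ collapses the family ``$(L,y-v)\in\operatorname*{epi}{}_{K}(F+I_{C}+T\circ G)^{\ast}$'' into the single condition built from $T(-S)-\operatorname*{int}K$. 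This is exactly the computation carried out in Lemma~\ref{lem2.1}, whose crucial move is the passage from $\operatorname*{WSup}T(-S)$ to $T(-S)$ via \eqref{1.3}; I would simply invoke that chain, observing that each of its arrows is a genuine ``$\Leftrightarrow$'' and hence usable in both directions, not only in the ``$\supset$'' direction actually needed in Lemma~\ref{lem2.1}.
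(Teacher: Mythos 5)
Your proposal is correct and follows essentially the same route as the paper: both arguments use Lemma~\ref{rem1.1} to identify membership of $(L,y)$ in $\operatorname*{epi}{}_{K}(F+I_{A})^{\ast}$ with $(b_{1})$, and the chain of equivalences from Lemma~\ref{lem2.1} (via \eqref{2.3} and \eqref{1.3}) to identify membership in the right-hand side of $(c)$ with $(b_{3})$, after which set equality is exactly the pointwise equivalence in $(d)$. The only cosmetic difference is that the paper phrases the $(d)\Rightarrow(c)$ direction as proving ``$\subset$'' with the ``$\supset$'' inclusion supplied by Lemma~\ref{lem2.1}, whereas you obtain both inclusions at once from the two membership dictionaries; your explicit justification that \eqref{1.3} applies (since $T\in\mathcal{L}_{+}^{w}(S,K)=\operatorname{dom}I_{-S}^{\ast}$ by Lemma~\ref{lem4.1}) is a point the paper leaves implicit.
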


\begin{proof}
$[(c)\Longrightarrow(d)]$ Take an arbitrary $(L,y)\in\mathcal{L}(X,Y)\times
Y$. On the one hand, by Lemma \ref{rem1.1}, one has%
\begin{align*}
&  (L,y)\in\operatorname*{epi}{}_{K}(F+I_{A})^{\ast}\\
&  \Longleftrightarrow\;y-L(x)+F(x)+I_{A}(x)\notin-\operatorname*{int}%
K,\;\forall x\in X\quad\\
&  \Longleftrightarrow\;y-L(x)+F(x)\notin-\operatorname*{int}K,\;\forall x\in
A.
\end{align*}
On the other hand, by an argument similar to that of (\ref{19}),
\begin{align*}
&  (L,y)\in\bigcup_{T\in\mathcal{L}_{+}^{w}(S,K)}\left[  \bigcap_{v\in
I_{-S}^{\ast}(T)}[\operatorname*{epi}{}_{K}(F+I_{C}+T\circ G)^{\ast
}+(0\mathbf{{_{\mathcal{L}}}},v)]\right] \\
&  \Longleftrightarrow\exists T\in\mathcal{L}_{+}^{w}(S,K):(L,y)\in
\bigcap_{v\in I_{-S}^{\ast}(T)}[\operatorname*{epi}{}_{K}(F+I_{C}+T\circ
G)^{\ast}+(0\mathbf{{_{\mathcal{L}}}},v)]\\
&  \Longleftrightarrow\exists T\in\mathcal{L}_{+}^{w}(S,K):(L,y-v)\in
\operatorname*{epi}{}_{K}(F+I_{C}+T\circ G)^{\ast},\;\forall v\in I_{-S}%
^{\ast}(T)\\
&  \Longleftrightarrow\exists T\in\mathcal{L}_{+}^{w}(S,K):y-v-L(x)+F(x)+I_{C}%
(x)+(T\circ G)(x)\notin-\operatorname*{int}K,\\
&  \hspace{6.5cm}\forall x\in X,\;\forall v\in I_{-S}^{\ast}(T),\quad\\
&  \Longleftrightarrow\exists T\in\mathcal{L}_{+}^{w}(S,K):y-L(x)+F(x)+(T\circ
G)(x)\notin I_{-S}^{\ast}(T)-\operatorname*{int}K,\;\forall x\in C,\\
&  \Longleftrightarrow\exists T\in\mathcal{L}_{+}^{w}(S,K):y-L(x)+F(x)+(T\circ
G)(x)\notin\operatorname*{WSup}T(-S)-\operatorname*{int}K,\forall x\in C\\
&  \Longleftrightarrow\exists T\in\mathcal{L}_{+}^{w}(S,K):y-L(x)+F(x)+(T\circ
G)(x)\notin T(-S)-\operatorname*{int}K,\;\forall x\in C.
\end{align*}
Then, the implication $(c)\Longrightarrow(d)$ follows.\newline%
$[(d)\Longrightarrow(c)]$ Thanks to Lemma \ref{lem2.1} we only need to prove
the inclusion $"\subset"$ in $(c)$. In fact, if $(L,y)\in\operatorname*{epi}%
{}_{K}(F+I_{A})^{\ast}$, then $y-L(x)+F(x)\notin-\operatorname*{int}K,\;$for
all $x\in A,$ and by the equivalence $(b_{1})\Longleftrightarrow(b_{3})$,
there exists $T\in\mathcal{L}_{+}^{w}(S,K)$ such that
\begin{equation}
F(x)+\left(  T\circ G\right)  (x)-L(x)+y\notin T(-S)-\operatorname*{int}%
K,\;\forall x\in C.\label{marco21}%
\end{equation}
Since $T(-S)-\operatorname*{int}K=\operatorname*{WSup}%
T(-S)-\operatorname*{int}K=I_{-S}^{\ast}(T)-\operatorname*{int}K$ (see
\eqref{1.3}), it turns out that (\ref{marco21}) is equivalent to
\[
y-v-L(x)+F(x)+I_{C}(x)+(T\circ G)(x)\notin-\operatorname*{int}K,\text{
}\forall x\in X,\;\forall v\in I_{-S}^{\ast}(T),
\]
which yields $(L,y)\in\operatorname*{epi}\nolimits_{K}(F+I_{C}+T\circ
G)^{\ast}+(0_{\mathcal{L}},v)$ for all $v\in I_{-S}^{\ast}(T)$, and the aimed
inclusion follows. \medskip
\end{proof}

\begin{theorem}
[3rd characterization of Farkas lemma]\label{theoFV3} Assume all the
assumptions of Theorem \ref{theoFV1}. Then the following statements are
equivalent:\newline$(c^{\prime})$ The set
\[
\bigcup\limits_{T\in\mathcal{L}_{+}^{w}(S,K)}\left[  \bigcap\limits_{v\in
I_{-S}^{\ast}(T)}\operatorname*{epi}\nolimits_{K}(F+I_{C}+T\circ G)^{\ast
}+(0_{\mathcal{L}},v)\right]
\]
is closed in $\mathcal{L}(X,Y)\times Y;$\newline$(d)$ $\forall(L,y)\in
\mathcal{L}(X,Y)\times Y,\;(b_{1})\Longleftrightarrow(b_{3})$.
\end{theorem}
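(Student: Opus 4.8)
The plan is to mirror the proof of Theorem~\ref{theoFV1}, replacing the first asymptotic representation of $\operatorname*{epi}\nolimits_{K}(F+I_{A})^{\ast}$ (Theorem~\ref{theo2}) by the second one (Theorem~\ref{theovectorw}), and letting the set equality $(c)$ of Theorem~\ref{lemFVw} play the role played in Theorem~\ref{theoFV1} by the equality in Lemma~\ref{lemFV}$(a)$. Throughout, write $U$ for the set displayed in $(c^{\prime})$, that is,
\[
U:=\bigcup_{T\in\mathcal{L}_{+}^{w}(S,K)}\left[\,\bigcap_{v\in I_{-S}^{\ast}(T)}\big[\operatorname*{epi}\nolimits_{K}(F+I_{C}+T\circ G)^{\ast}+(0_{\mathcal{L}},v)\big]\right].
\]

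First I would note that the hypotheses assumed here are exactly those of Theorem~\ref{theo2}, hence those of Theorem~\ref{theovectorw}, which therefore gives $\operatorname*{epi}\nolimits_{K}(F+I_{A})^{\ast}=\operatorname*{cl}U$. Since $U\subset\operatorname*{cl}U$ always, this at once yields the equivalence of $(c^{\prime})$ with the set equality $\operatorname*{epi}\nolimits_{K}(F+I_{A})^{\ast}=U$: if $U$ is closed then $U=\operatorname*{cl}U=\operatorname*{epi}\nolimits_{K}(F+I_{A})^{\ast}$, and conversely if $\operatorname*{epi}\nolimits_{K}(F+I_{A})^{\ast}=U$ then $U=\operatorname*{cl}U$ is closed. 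Thus $(c^{\prime})$ is equivalent to condition $(c)$ of Theorem~\ref{lemFVw}, and it only remains to establish that this set equality is equivalent to $(d)$.

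That equivalence is precisely the chain $(c)\Longleftrightarrow(d)$ proved inside Theorem~\ref{lemFVw}; the point I would emphasize — and which I regard as the only (mild) obstacle — is that this part of that proof uses none of the qualification conditions $(c_{1})$, $(c_{2})$, $(c_{3})$, relying solely on Lemma~\ref{rem1.1}, Lemma~\ref{lem2.1}, and the identities \eqref{formularwsup}, \eqref{2.3}, \eqref{1.3}, all of which hold under the present weaker hypotheses. Concretely: for a fixed $(L,y)$, Lemma~\ref{rem1.1} rephrases $(L,y)\in\operatorname*{epi}\nolimits_{K}(F+I_{A})^{\ast}$ as assertion $(b_{1})$, while the chain of equivalences used for \eqref{19} — in particular the reductions $I_{-S}^{\ast}(T)-\operatorname*{int}K=\operatorname*{WSup}T(-S)-\operatorname*{int}K=T(-S)-\operatorname*{int}K$ — rephrases $(L,y)\in U$ as assertion $(b_{3})$; the set equality $\operatorname*{epi}\nolimits_{K}(F+I_{A})^{\ast}=U$ then matches $(b_{1})$ with $(b_{3})$, which is $(d)$. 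For the converse, assuming $(d)$, the inclusion $\operatorname*{epi}\nolimits_{K}(F+I_{A})^{\ast}\supset U$ is the first inclusion of Proposition~\ref{pro2.1}, and $\operatorname*{epi}\nolimits_{K}(F+I_{A})^{\ast}\subset U$ follows by taking $(L,y)$ in the left-hand side, rephrasing it as $(b_{1})$, using $(b_{1})\Rightarrow(b_{3})$ to produce $T\in\mathcal{L}_{+}^{w}(S,K)$ with $F(x)+(T\circ G)(x)-L(x)+y\notin T(-S)-\operatorname*{int}K$ for all $x\in C$, and converting this back, via \eqref{1.3} and Lemma~\ref{rem1.1}, into the statement that $(L,y)$ lies in the intersection associated with $T$. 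Chaining the two equivalences gives $(c^{\prime})\Longleftrightarrow(d)$. Since Theorem~\ref{lemFVw} already carries out these manipulations, the cleanest write-up would simply cite the relevant part of its proof, explicitly flagging that the qualification conditions are not used there.
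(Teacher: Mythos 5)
Your proposal is correct and follows essentially the same route as the paper: invoke Theorem \ref{theovectorw} to identify $(c^{\prime})$ with the set equality in condition $(c)$, then pass to $(d)$ via the equivalence $(c)\Longleftrightarrow(d)$ of Theorem \ref{lemFVw}. Your additional observation that this $(c)\Longleftrightarrow(d)$ argument uses none of the qualification conditions $(c_{1})$--$(c_{3})$ is a careful justification of a point the paper leaves implicit when it cites Theorem \ref{lemFVw} under the weaker hypotheses of Theorem \ref{theoFV1}.
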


\begin{proof}
It follows from Theorem \ref{theovectorw} that
\[
\operatorname*{epi}{}_{K}(F+I_{A})^{\ast}=\operatorname*{cl}\left\{
\bigcup_{T\in\mathcal{L}_{+}^{w}(S,K)}\left[  \bigcap_{v\in I_{-S}^{\ast}%
(T)}[\operatorname*{epi}{}_{K}(F+I_{C}+T\circ G)^{\ast}%
+(0\mathbf{{_{\mathcal{L}}}},v)]\right]  \right\}  .
\]
Hence, $(c^{\prime})$ is equivalent to
\[
\operatorname*{epi}{}_{K}(F+I_{A})^{\ast}=\bigcup_{T\in\mathcal{L}_{+}%
^{w}(S,K)}\left[  \bigcap_{v\in I_{-S}^{\ast}(T)}\operatorname*{epi}{}%
_{K}(F+I_{C}+T\circ G)^{\ast}+(0\mathbf{{_{\mathcal{L}}}},v)\right]  ,
\]
and the conclusion follows from Theorem \ref{lemFVw}. \medskip
\end{proof}

\begin{theorem}
[2nd Farkas lemma]\label{theoFV4} Assume all the assumptions of Theorem
\ref{theoFV2}. Then, for all $(L,y)\in\mathcal{L}(X,Y)\times Y$, one has
$(b_{1})\Longleftrightarrow(b_{3})\mathrm{.}$
\end{theorem}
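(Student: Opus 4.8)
The plan is to obtain the equivalence $(b_{1})\Longleftrightarrow(b_{3})$ directly from the non-asymptotic representation of $\operatorname*{epi}{}_{K}(F+I_{A})^{\ast}$ already established, rather than reproving anything from scratch. First I would note that the hypotheses imposed here are exactly those of Theorem \ref{theovector2} (namely: $C$ a nonempty convex subset of $X$, $F$ proper and $K$-convex, $G$ proper and $S$-convex, $A\cap\operatorname{dom}F\neq\emptyset$, and one of the qualification conditions $(c_{1})$, $(c_{2})$, $(c_{3})$), since Theorem \ref{theoFV2} carries precisely this list. Consequently Theorem \ref{theovectorw2} applies and yields
\[
\operatorname*{epi}{}_{K}(F+I_{A})^{\ast}=\bigcup_{T\in\mathcal{L}_{+}^{w}(S,K)}\left[\bigcap_{v\in I_{-S}^{\ast}(T)}\bigl[\operatorname*{epi}{}_{K}(F+I_{C}+T\circ G)^{\ast}+(0_{\mathcal{L}},v)\bigr]\right].
\]

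Next I would observe that this identity is verbatim statement $(c)$ in Theorem \ref{lemFVw} (the 2nd characterization of Farkas lemma), whose standing hypotheses are again those of Theorem \ref{theovector2} and are therefore in force. Applying Theorem \ref{lemFVw} gives the equivalence $(c)\Longleftrightarrow(d)$; since $(c)$ holds, so does $(d)$, and $(d)$ asserts exactly that for every $y\in Y$ and every $L\in\mathcal{L}(X,Y)$ the assertions $(b_{1})$ and $(b_{3})$ are equivalent. This is the desired conclusion. The structure mirrors that of Theorem \ref{theoFV2}, with Theorem \ref{theovectorw2} and Theorem \ref{lemFVw} playing the roles that Theorem \ref{theovector2} and Lemma \ref{lemFV} play there.

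There is no genuinely delicate point here: all the analytical work has been carried out upstream --- the inclusions in Lemma \ref{lem2.1} and Lemma \ref{lem2.3}, the chain of equivalences inside the proof of Theorem \ref{lemFVw} that converts membership in the union/intersection of shifted epigraphs into the statement $(b_{3})$ by means of \eqref{1.3} and \eqref{2.3}, and the scalar strong duality (Lemma \ref{lemscalar2}) that powers Theorems \ref{theovector2} and \ref{theovectorw2}. The only thing one must be careful about is the bookkeeping: checking that the hypothesis lists of Theorems \ref{theoFV2}, \ref{theovector2}, \ref{theovectorw2}, and \ref{lemFVw} genuinely coincide, and that statement $(c)$ of Theorem \ref{lemFVw} is literally the conclusion of Theorem \ref{theovectorw2}. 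If a self-contained argument were wanted one could instead unwind the proof of Theorem \ref{lemFVw}, using Lemma \ref{rem1.1} to translate both $(b_{1})$ and $(b_{3})$ into epigraphical membership statements and then applying the set identity above; but invoking the two theorems is cleaner and is the route I would take.
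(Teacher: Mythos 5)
Your proposal is correct and coincides with the paper's own argument: the published proof likewise invokes Theorem \ref{theovectorw2} (whose hypotheses are exactly those of Theorem \ref{theoFV2}) to obtain the non-asymptotic representation, and then concludes via the equivalence $(c)\Longleftrightarrow(d)$ of Theorem \ref{lemFVw}. Your hypothesis bookkeeping and the remark that statement $(c)$ is literally the conclusion of Theorem \ref{theovectorw2} are exactly what the paper relies on.
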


\begin{proof}
Under the assumptions of this theorem, it follows from Theorem
\ref{theovectorw2} that
\[
\operatorname*{epi}{}_{K}(F+I_{A})^{\ast}=\bigcup_{T\in\mathcal{L}_{+}%
^{w}(S,K)}\left[  \bigcap_{v\in I_{-S}^{\ast}(T)}\operatorname*{epi}{}%
_{K}(F+I_{C}+T\circ G)^{\ast}+(0\mathbf{{_{\mathcal{L}}}},v)\right]  .
\]
The conclusion now comes from Theorem \ref{lemFVw}. \medskip
\end{proof}

It should be mentioned that the Farkas-type results of the forms
[$(b_{1})\Longleftrightarrow(b_{2})$] or [$(b_{1})\Longleftrightarrow(b_{3})$]
in Theorems \ref{theoFV1}-\ref{theoFV4}, when specified to the case where
$Y=\mathbb{R},$ following the way as in \cite{DGLM16}, either cover or extend
many Farkas-type results and their stable forms in the literature, such as
\cite{BW04/05}, \cite{DGL06}, \cite{DJ}, \cite{FLN09}, etc.

\section{Optimality conditions for vector optimization problems}

Let $X,$ $Y,$ $Z,$ $F,$ $G,$ $C,$ and $A$ be as in Section 4. In this section
we provide optimality conditions for the feasible and non-trivial vector
optimization problem
\[%
\begin{array}
[c]{rrl}%
\mathrm{(VOP)}\ \  & \operatorname*{WMin}\left\{  F(x):x\in C,\;G(x)\in
-S\right\}  . &
\end{array}
\]
Using the Farkas-type results established in the last section, we get the
following optimality conditions for $\mathrm{(VOP)}$.

\begin{theorem}
[1st characterization of optimality conditions]\label{OCA1}Let $\bar{x}\in
A\cap\operatorname{dom}F$. Assume all the assumptions of Theorem
\ref{theoFV1}. Then the following statements are equivalent:\newline$(e)$ The
set
\[
\bigcup\limits_{T\in\mathcal{L}_{+}(S,K)}\operatorname*{epi}\nolimits_{K}%
(F+I_{C}+T\circ G)^{\ast}%
\]
is closed regarding $(0_{\mathcal{L}},-F(\bar{x}));$\newline$(f)$ $\bar{x}$ is
a weak solution of $\mathrm{(VOP)}$ if and only if there exists $T\in
\mathcal{L}_{+}(S,K)$ such that
\[
-F(\bar{x})\in(F+I_{C}+T\circ G)^{\ast}(0_{\mathcal{L}})+K;
\]
\newline$(g)$ $\bar{x}$ is a weak solution of $\mathrm{(VOP)}$ if and only if
there exists $T\in\mathcal{L}_{+}(S,K)$ such that
\[
F(x)+(T\circ G)(x)-F(\bar{x})\notin-\operatorname*{int}K,\;\forall x\in C.
\]

\end{theorem}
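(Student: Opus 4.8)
The plan is to reduce Theorem~\ref{OCA1} to the Farkas-type machinery already in place, exploiting the fact that the weak-minimality of $\bar x$ is, by definition, a statement about $F(\bar x)$ belonging to $\operatorname{WInf}F(A)$, which in turn can be phrased as a non-domination condition. First I would record the elementary observation that, for $\bar x\in A\cap\operatorname{dom}F$,
\[
\bar x\text{ is a weak solution of }\mathrm{(VOP)}\quad\Longleftrightarrow\quad F(x)-F(\bar x)\notin-\operatorname*{int}K,\ \forall x\in A,
\]
i.e.\ $F(x)+I_A(x)-L(x)+y\notin-\operatorname*{int}K$ for all $x\in X$ with the particular choice $L=0_{\mathcal L}$ and $y=-F(\bar x)$. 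By Lemma~\ref{rem1.1} this says precisely that $(0_{\mathcal L},-F(\bar x))\in\operatorname*{epi}{}_K(F+I_A)^\ast$. So the whole theorem is the specialization, at the point $(0_{\mathcal L},-F(\bar x))$, of the equivalences appearing in Lemma~\ref{lemFV} and Theorem~\ref{theoFV1}, once we know the relevant epigraph union is \emph{closed regarding} that single point rather than closed everywhere.

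The core step is therefore to prove $(e)\Longleftrightarrow(f)$. By Theorem~\ref{theo2} (whose hypotheses are in force, being exactly those of Theorem~\ref{theoFV1}), we have
\[
\operatorname*{epi}{}_K(F+I_A)^\ast=\operatorname*{cl}\Big[\bigcup_{T\in\mathcal L_+(S,K)}\operatorname*{epi}{}_K(F+I_C+T\circ G)^\ast\Big].
\]
Write $\mathcal E:=\bigcup_{T\in\mathcal L_+(S,K)}\operatorname*{epi}{}_K(F+I_C+T\circ G)^\ast$. Saying $\mathcal E$ is closed regarding $(0_{\mathcal L},-F(\bar x))$ means, by the definition recalled in Section~2, that this point lies in $\mathcal E$ as soon as it lies in $\operatorname{cl}\mathcal E=\operatorname*{epi}{}_K(F+I_A)^\ast$. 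Combining this with the reformulation of the preceding paragraph, $(e)$ is equivalent to: $\bar x$ weak solution $\Longrightarrow$ $(0_{\mathcal L},-F(\bar x))\in\mathcal E$. The reverse implication ($(0_{\mathcal L},-F(\bar x))\in\mathcal E$ $\Longrightarrow$ $\bar x$ weak solution) is automatic from the inclusion $\mathcal E\subset\operatorname*{epi}{}_K(F+I_A)^\ast$ noted in the proof of Theorem~\ref{theo2}, so the two directions together give exactly statement $(f)$ after unwinding $(0_{\mathcal L},-F(\bar x))\in\operatorname*{epi}{}_K(F+I_C+T\circ G)^\ast$ via the definition of the $K$-epigraph of a conjugate map, namely $-F(\bar x)\in(F+I_C+T\circ G)^\ast(0_{\mathcal L})+K$.

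Finally $(f)\Longleftrightarrow(g)$ is a direct translation: by Lemma~\ref{rem1.1} applied to the mapping $F+I_C+T\circ G$ with $L=0_{\mathcal L}$ and $y=-F(\bar x)$,
\[
-F(\bar x)\in(F+I_C+T\circ G)^\ast(0_{\mathcal L})+K\quad\Longleftrightarrow\quad F(x)+(T\circ G)(x)-F(\bar x)\notin-\operatorname*{int}K,\ \forall x\in C,
\]
so the existential statements in $(f)$ and $(g)$ coincide term by term. I expect the only genuinely delicate point to be making the "closed regarding a single point" bookkeeping airtight — in particular checking that the non-domination reformulation of weak-minimality really matches membership of $(0_{\mathcal L},-F(\bar x))$ in $\operatorname*{epi}{}_K(F+I_A)^\ast$ (this uses that $\bar x\in A\cap\operatorname{dom}F$, so that $I_A(\bar x)=0_Y$ and $F(\bar x)\in Y$, keeping all the extended-arithmetic conventions of \eqref{conv} valid), and that the direction of the definition of "closed regarding" is the one we need. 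Everything else is the routine unwinding of the definitions of conjugate map and $K$-epigraph, together with citing Theorem~\ref{theo2} and Lemma~\ref{rem1.1}.
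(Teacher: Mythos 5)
Your proof is correct and follows essentially the same route as the paper: both rest on the asymptotic representation of Theorem \ref{theo2} and on reading condition $(e)$ as the localization of that representation at the single point $(0_{\mathcal{L}},-F(\bar{x}))$. The only difference is that where the paper closes the argument by citing \cite[Theorem 5.3]{DGLM16}, you supply that step directly via Lemma \ref{rem1.1} and the observation that $\bar{x}$ is a weak solution iff $(0_{\mathcal{L}},-F(\bar{x}))\in\operatorname*{epi}{}_{K}(F+I_{A})^{\ast}$ (this is \cite[Proposition 5.1]{DGLM16}, which the paper itself invokes in the proof of Theorem \ref{OCA2}), so your version is merely more self-contained.
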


\begin{proof}
Under the current assumptions, we get from Theorem \ref{theo2} that
\[
\operatorname*{epi}{}_{K}(F+I_{A})^{\ast}=\operatorname*{cl}\left[
\bigcup_{T\in\mathcal{L}_{+}(S,K)}\operatorname*{epi}{}_{K}(F+I_{C}+T\circ
G)^{\ast}\right]  .
\]
Hence, $(e)$ is equivalent to%
\[%
\begin{array}
[c]{l}%
\operatorname*{epi}{}_{K}(F+I_{A})^{\ast}\cap\{(0_{\mathcal{L}},-F(\bar
{x}))\}\\
=\left(  \bigcup\limits_{T\in\mathcal{L}_{+}(S,K)}\operatorname*{epi}{}%
_{K}(F+I_{C}+T\circ G)^{\ast}\right)  \cap\{(0_{\mathcal{L}},-F(\bar{x}))\}
\end{array}
\]
and the conclusion follows from Theorem 5.3 in \cite{DGLM16}. \medskip
\end{proof}

\begin{theorem}
[1st optimality conditions]\label{OCB1} Let $\bar{x}\in A\cap
\operatorname{dom}F$. Assume all the assumptions of Theorem \ref{theoFV2}.
Then $(f)$ and $(g)$ hold.
\end{theorem}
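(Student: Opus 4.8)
The plan is to deduce both equivalences from the Farkas lemma of Theorem~\ref{theoFV2}, applied to the single pair $(L,y)=(0_{\mathcal{L}},-F(\bar{x}))$, which is a legitimate element of $\mathcal{L}(X,Y)\times Y$ because $\bar{x}\in\operatorname{dom}F$ forces $F(\bar{x})\in Y$. The first step is to recast ``$\bar{x}$ is a weak solution of $\mathrm{(VOP)}$'', i.e.\ $F(\bar{x})\in\operatorname*{WMin}F(A)$, in terms of condition $(b_{1})$ of Lemma~\ref{lemFV}. Since $\bar{x}\in A$ we have $F(\bar{x})\in F(A)$, so the second clause in the definition of a weakly infimal element of $F(A)$ is automatically satisfied (take $v=F(\bar{x})$ in Definition~\ref{def1}(1)); hence $F(\bar{x})\in\operatorname*{WMin}F(A)$ is equivalent to $F(x)\not<_{K}F(\bar{x})$ for all $x\in A$, that is, $F(x)-F(\bar{x})\notin-\operatorname*{int}K$ for all $x\in A$. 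Recalling that $A=\{x\in C:G(x)\in-S\}$ and that, by the conventions \eqref{conv}, any assertion ``$\,\cdot\,\notin-\operatorname*{int}K$'' is vacuously true when its left-hand side equals $+\infty_{Y}$, the displayed condition is precisely $(b_{1})$ evaluated at $(L,y)=(0_{\mathcal{L}},-F(\bar{x}))$.

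Next I would invoke Theorem~\ref{theoFV2}: under the present hypotheses (which are exactly those of that theorem), $(b_{1})\Longleftrightarrow(b_{2})$ for every $(L,y)\in\mathcal{L}(X,Y)\times Y$, and in particular for our chosen pair. Writing out $(b_{2})$ with $L=0_{\mathcal{L}}$, so that $L(x)=0_{Y}$, and $y=-F(\bar{x})$ gives: there exists $T\in\mathcal{L}_{+}(S,K)$ with $F(x)+(T\circ G)(x)-F(\bar{x})\notin-\operatorname*{int}K$ for all $x\in C$. Chaining this with the previous paragraph yields the equivalence asserted in $(g)$.

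It remains to pass from $(g)$ to $(f)$, for which it suffices to show that, for each fixed $T\in\mathcal{L}_{+}(S,K)$,
\[
-F(\bar{x})\in(F+I_{C}+T\circ G)^{\ast}(0_{\mathcal{L}})+K
\ \Longleftrightarrow\
\big(\,F(x)+(T\circ G)(x)-F(\bar{x})\notin-\operatorname*{int}K\ \ \forall x\in C\,\big).
\]
By the definition of the $K$-epigraph of a conjugate map, the left-hand side says exactly that $(0_{\mathcal{L}},-F(\bar{x}))\in\operatorname*{epi}{}_{K}(F+I_{C}+T\circ G)^{\ast}$; and Lemma~\ref{rem1.1}, applied to the proper mapping $F+I_{C}+T\circ G$, rewrites this as $-F(\bar{x})+(F+I_{C}+T\circ G)(x)\notin-\operatorname*{int}K$ for all $x\in X$, which collapses to the right-hand side once one discards the values $x\notin C$ (there $I_{C}(x)=+\infty_{Y}$ and the condition holds trivially). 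Quantifying over $T\in\mathcal{L}_{+}(S,K)$ then turns the right-hand condition of $(g)$ into the right-hand condition of $(f)$, so $(f)$ follows from $(g)$; since both statements share the same left-hand side, both hold.

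I do not anticipate a genuine difficulty: the argument is a bookkeeping reduction to results already established (Theorem~\ref{theoFV2}, Lemma~\ref{rem1.1}, and Definition~\ref{def1}). The only points demanding attention are purely formal --- using the conventions in \eqref{conv} to justify that the quantifier ``$\forall x\in X$'' correctly shrinks to ``$\forall x\in A$'' (resp.\ ``$\forall x\in C$'') through the indicator maps $I_{A}$ and $I_{C}$, and noting at the outset that $\bar{x}\in\operatorname{dom}F$ guarantees $F(\bar{x})\in Y$ so that $-F(\bar{x})$, and hence every object appearing above, is well defined.
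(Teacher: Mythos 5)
Your proposal is correct, and it reaches the conclusion by a different (and somewhat more self-contained) route than the paper. The paper proves Theorem \ref{OCB1} by invoking Theorem \ref{theovector2} together with Lemma \ref{lem4} to conclude that $\bigcup_{T\in\mathcal{L}_{+}(S,K)}\operatorname*{epi}{}_{K}(F+I_{C}+T\circ G)^{\ast}$ is closed (hence closed regarding $(0_{\mathcal{L}},-F(\bar{x}))$, i.e.\ condition $(e)$ holds) and then appeals to the characterization Theorem \ref{OCA1}, whose own proof rests on Theorem \ref{theo2} and on \cite[Theorem 5.3, Proposition 5.1]{DGLM16}. You instead bypass $(e)$ and Theorem \ref{OCA1} altogether: you specialize the Farkas lemma of Theorem \ref{theoFV2} to the single pair $(L,y)=(0_{\mathcal{L}},-F(\bar{x}))$, re-derive from Definition \ref{def1} the fact that weak optimality of $\bar{x}$ is exactly $(b_{1})$ at that pair (the paper cites \cite[Proposition 5.1]{DGLM16} for the equivalent epigraphical reformulation), and obtain $(f)\Longleftrightarrow(g)$ by unwinding the definition of $\operatorname*{epi}{}_{K}(\cdot)^{\ast}$ via Lemma \ref{rem1.1}; all these steps are sound, including the bookkeeping with the conventions \eqref{conv}. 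What your route buys is that every result you use is applied under hypotheses that are literally those of Theorem \ref{theoFV2}; the paper's shortcut through Theorem \ref{OCA1} formally imports the assumptions of Theorem \ref{theoFV1} (closed $C$, lsc scalarizations $y^{\ast}\circ F$, $S$-epi closed $G$), which are not part of the assumptions of Theorem \ref{theoFV2} under $(c_{1})$ or $(c_{3})$, so your argument is arguably tighter, at the modest cost of redoing by hand the two small translations that Theorem \ref{OCA1} and \cite{DGLM16} package for free.
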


\begin{proof}
Under the assumptions of this theorem, we get from Theorem \ref{theovector2}
and Lemma \ref{lem4} that $\bigcup\limits_{T\in\mathcal{L}_{+}(S,K)}%
\operatorname*{epi}{}_{K}(F+I_{C}+T\circ G)^{\ast}$ is a closed subset of
$\mathcal{L}(X,Y)\times Y$. So, $(e)$ holds and the conclusion comes from
Theorem \ref{OCA1}. \medskip
\end{proof}

\begin{theorem}
[2nd characterization of optimality conditions]\label{OCA2} Let $\bar{x}\in
A\cap\operatorname{dom}F$. Assume all the assumptions of Theorem
\ref{theoFV1}. Then the following statements are equivalent:\newline$(h)$ The
set
\[
\bigcup\limits_{T\in\mathcal{L}_{+}^{w}(S,K)}\left[  \bigcap\limits_{v\in
I_{-S}^{\ast}(T)}[\operatorname*{epi}_{K}(F+I_{C}+T\circ G)^{\ast
}+(0_{\mathcal{L}},v)]\right]
\]
is closed regarding $(0_{\mathcal{L}},-F(\bar{x}));$\newline$(i)$ $\bar{x}$ is
a weak solution of $\mathrm{(VOP)}$ if and only if there exists $T\in
\mathcal{L}_{+}^{w}(S,K)$ such that
\[
-F(\bar{x})-I_{-S}^{\ast}(T)\subset(F+I_{C}+T\circ G)^{\ast}(0_{\mathcal{L}%
})+K;
\]
\newline$(j)$ $\bar{x}$ is a weak solution of $\mathrm{(VOP)}$ if and only if
there exists $T\in\mathcal{L}_{+}^{w}(S,K)$ such that
\[
F(x)+(T\circ G)(x)-F(\bar{x})\notin T(-S)-\operatorname*{int}K,\;\forall x\in
C.
\]

\end{theorem}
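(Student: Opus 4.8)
The plan is to run the argument of Theorem~\ref{OCA1}, but with its first ingredient, Theorem~\ref{theo2}, replaced by the $\mathcal{L}_{+}^{w}(S,K)$-indexed representation of Theorem~\ref{theovectorw}. Abbreviate
\[
W:=\bigcup_{T\in\mathcal{L}_{+}^{w}(S,K)}\left[\bigcap_{v\in I_{-S}^{\ast}(T)}[\operatorname*{epi}{}_{K}(F+I_{C}+T\circ G)^{\ast}+(0_{\mathcal{L}},v)]\right]
\]
and $p:=(0_{\mathcal{L}},-F(\bar{x}))$. By Theorem~\ref{theovectorw}, $\operatorname*{epi}{}_{K}(F+I_{A})^{\ast}=\operatorname*{cl}W$, while Proposition~\ref{pro2.1} gives $W\subset\operatorname*{epi}{}_{K}(F+I_{A})^{\ast}$. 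Hence condition $(h)$, namely $\{p\}\cap\operatorname*{cl}W=\{p\}\cap W$, reduces to the single implication $p\in\operatorname*{epi}{}_{K}(F+I_{A})^{\ast}\Rightarrow p\in W$, the converse being automatic from $W\subset\operatorname*{epi}{}_{K}(F+I_{A})^{\ast}$.

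For $(h)\Leftrightarrow(i)$ I would then read off both memberships. By Lemma~\ref{rem1.1}, $p\in\operatorname*{epi}{}_{K}(F+I_{A})^{\ast}$ holds precisely when $F(x)-F(\bar{x})\notin-\operatorname*{int}K$ for all $x\in A$, i.e.\ exactly when $\bar{x}$ is a weak solution of $\mathrm{(VOP)}$. Unwinding the union and the intersection, $p\in W$ holds precisely when there is $T\in\mathcal{L}_{+}^{w}(S,K)$ with $(0_{\mathcal{L}},-F(\bar{x})-v)\in\operatorname*{epi}{}_{K}(F+I_{C}+T\circ G)^{\ast}$ for every $v\in I_{-S}^{\ast}(T)$, that is, with $-F(\bar{x})-I_{-S}^{\ast}(T)\subset(F+I_{C}+T\circ G)^{\ast}(0_{\mathcal{L}})+K$. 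Substituting these two descriptions into the implication above yields exactly statement $(i)$ (whose converse implication holds unconditionally, again because $W\subset\operatorname*{epi}{}_{K}(F+I_{A})^{\ast}$ and Lemma~\ref{rem1.1}). Equivalently, this step is an instance of Theorem~5.3 in \cite{DGLM16}, applied just as in the proof of Theorem~\ref{OCA1}.

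There remains $(i)\Leftrightarrow(j)$, the translation of an epigraph inclusion into the $-\operatorname*{int}K$ language, which is the same chain of equivalences used in Lemma~\ref{lem2.1} and in the proof of Theorem~\ref{lemFVw}: by Lemma~\ref{rem1.1}, the inclusion $-F(\bar{x})-I_{-S}^{\ast}(T)\subset(F+I_{C}+T\circ G)^{\ast}(0_{\mathcal{L}})+K$ is equivalent to $F(x)+(T\circ G)(x)-F(\bar{x})\notin I_{-S}^{\ast}(T)-\operatorname*{int}K$ for all $x\in C$; and by \eqref{2.3} and \eqref{1.3} (the latter applicable since $T\in\mathcal{L}_{+}^{w}(S,K)=\operatorname{dom}I_{-S}^{\ast}$ by Lemma~\ref{lem4.1}) one has $I_{-S}^{\ast}(T)-\operatorname*{int}K=\operatorname*{WSup}T(-S)-\operatorname*{int}K=T(-S)-\operatorname*{int}K$, which turns the previous line into the condition of $(j)$. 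Chaining the two equivalences proves the theorem.

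No step is genuinely hard once Theorems~\ref{theovector2} and \ref{theovectorw} are in hand; the only points that need care are the reduction of the closed-regarding condition $(h)$ to the membership implication (which uses \emph{both} $\operatorname*{epi}{}_{K}(F+I_{A})^{\ast}=\operatorname*{cl}W$ and $W\subset\operatorname*{epi}{}_{K}(F+I_{A})^{\ast}$) and, in the last step, the passage from $I_{-S}^{\ast}(T)$ to $T(-S)$, which rests on $I_{-S}^{\ast}(T)=\operatorname*{WSup}T(-S)$ absorbing $-\operatorname*{int}K$.
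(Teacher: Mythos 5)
Your proposal is correct and follows essentially the same route as the paper: it uses Theorem \ref{theovectorw} to turn the closed-regarding condition $(h)$ into the membership implication at $(0_{\mathcal{L}},-F(\bar{x}))$, unwinds $p\in W$ exactly as in the proof of Theorem \ref{lemFVw} (the paper's equivalences for $(i)$ and $(j)$), and identifies $p\in\operatorname*{epi}_{K}(F+I_{A})^{\ast}$ with $\bar{x}$ being a weak solution (the paper cites \cite[Proposition 5.1]{DGLM16} for this, you derive it directly from Lemma \ref{rem1.1}). The only cosmetic difference is that you chain $(h)\Leftrightarrow(i)\Leftrightarrow(j)$ while the paper obtains $(h)\Leftrightarrow(i)$ and $(h)\Leftrightarrow(j)$ separately; your care about $\operatorname*{WSup}T(-S)\neq\{+\infty_{Y}\}$ when invoking \eqref{1.3} matches the paper's implicit use of Lemma \ref{lem4.1}.
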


\begin{proof}
Arguing as in the proof of Theorem \ref{lemFVw}, with $(0_{\mathcal{L}%
},-F(\bar{x}))$ instead of $(L,y),$ we get%
\begin{equation}%
\begin{array}
[c]{l}%
(0_{\mathcal{L}},-F(\bar{x}))\in\bigcup_{T\in\mathcal{L}_{+}^{w}(S,K)}\left[
\bigcap_{v\in I_{-S}^{\ast}(T)}[\operatorname*{epi}{}_{K}(F+I_{C}+T\circ
G)^{\ast}+(0\mathbf{{_{\mathcal{L}}}},v)]\right] \\
\Longleftrightarrow\;\exists T\in\mathcal{L}_{+}^{w}(S,K):(0_{\mathcal{L}%
},-F(\bar{x})-v)\in\operatorname*{epi}{}_{K}(F+I_{C}+T\circ G)^{\ast
},\;\forall v\in I_{-S}^{\ast}(T)\\
\Longleftrightarrow\;\exists T\in\mathcal{L}_{+}^{w}(S,K):-F(\bar{x}%
)-v\in(F+I_{C}+T\circ G)^{\ast}(0_{\mathcal{L}})+K,\;\forall v\in I_{-S}%
^{\ast}(T)\\
\Longleftrightarrow\;\exists T\in\mathcal{L}_{+}^{w}(S,K):-F(\bar{x}%
)-I_{-S}^{\ast}(T)\subset(F+I_{C}+T\circ G)^{\ast}(0_{\mathcal{L}})+K,
\end{array}
\label{35}%
\end{equation}
and also
\begin{equation}%
\begin{array}
[c]{l}%
(0_{\mathcal{L}},-F(\bar{x}))\in\bigcup_{T\in\mathcal{L}_{+}^{w}(S,K)}\left[
\bigcap_{v\in I_{-S}^{\ast}(T)}[\operatorname*{epi}{}_{K}(F+I_{C}+T\circ
G)^{\ast}+(0\mathbf{{_{\mathcal{L}}}},v)]\right] \\
\Longleftrightarrow\;\exists T\in\mathcal{L}_{+}^{w}(S,K):F(x)+(T\circ
G)(x)-F(\bar{x})\notin T(-S)-\operatorname*{int}K,\;\forall x\in C.
\end{array}
\label{36}%
\end{equation}
\newline Next, under the assumptions of this theorem, we get from Theorem
\ref{theovectorw} that
\[
\operatorname*{epi}{}_{K}(F+I_{A})^{\ast}=\operatorname*{cl}\left\{
\bigcup_{T\in\mathcal{L}_{+}^{w}(S,K)}\left[  \bigcap_{v\in I_{-S}^{\ast}%
(T)}[\operatorname*{epi}{}_{K}(F+I_{C}+T\circ G)^{\ast}%
+(0\mathbf{{_{\mathcal{L}}}},v)]\right]  \right\}  .
\]
Hence, $(h)$ is equivalent to
\begin{align}
& \operatorname*{epi}{}_{K}(F+I_{A})^{\ast}\cap\{(0_{\mathcal{L}},-F(\bar
{x}))\}=\label{37}\\
& \hskip0.5cm\left\{  \bigcup_{T\in\mathcal{L}_{+}^{w}(S,K)}\left[
\bigcap_{v\in I_{-S}^{\ast}(T)}[\operatorname*{epi}{}_{K}(F+I_{C}+T\circ
G)^{\ast}+(0\mathbf{{_{\mathcal{L}}}},v)]\right]  \right\}  \cap
\{(0_{\mathcal{L}},-F(\bar{x}))\}.\nonumber
\end{align}
Now, take an arbitrary $\bar{x}\in A\cap\operatorname{dom}F$. Let us recall
that $\bar{x}$ is a weak solution of (VOP) if and only if $(0_{\mathcal{L}%
},-F(\bar{x}))\in\operatorname*{epi}{}_{K}(F+I_{A})^{\ast}$ (see
\cite[Proposition 5.1]{DGLM16}). Hence, we get [$(h)\Longleftrightarrow(i)$]
from \eqref{37} and \eqref{35}, and [$(h)\Longleftrightarrow(j)$] from
\eqref{37} and \eqref{36}. \medskip
\end{proof}

\begin{theorem}
[2nd optimality conditions]\label{OCB2} Let $\bar{x}\in A\cap
\operatorname{dom}F$. Assume all the assumptions of Theorem \ref{theoFV2}.
Then $(i)$ and $(j)$ hold.
\end{theorem}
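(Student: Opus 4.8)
The plan is to argue in parallel to the proof of Theorem \ref{OCB1}, but using the non-asymptotic representation of $\operatorname*{epi}{}_{K}(F+I_{A})^{\ast}$ that is available under the present hypotheses. First I would invoke Theorem \ref{theovectorw2}, whose assumptions coincide with those of Theorem \ref{theoFV2}: it gives the closure-free identity
\[
\operatorname*{epi}{}_{K}(F+I_{A})^{\ast}=\bigcup_{T\in\mathcal{L}_{+}^{w}(S,K)}\left[\bigcap_{v\in I_{-S}^{\ast}(T)}[\operatorname*{epi}{}_{K}(F+I_{C}+T\circ G)^{\ast}+(0_{\mathcal{L}},v)]\right].
\]
Since $A\cap\operatorname{dom}F\neq\emptyset$, the mapping $F+I_{A}\colon X\rightarrow Y\cup\{+\infty_{Y}\}$ is proper, so Lemma \ref{lem4} shows that $\operatorname*{epi}{}_{K}(F+I_{A})^{\ast}$ --- hence the set on the right-hand side above --- is closed in $\mathcal{L}(X,Y)\times Y$. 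In particular it is closed regarding the single point $(0_{\mathcal{L}},-F(\bar{x}))$, so condition $(h)$ of Theorem \ref{OCA2} is automatically satisfied.

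Next I would reuse the two chains of equivalences \eqref{35} and \eqref{36} isolated inside the proof of Theorem \ref{OCA2}. These rely only on Lemma \ref{rem1.1}, the identity \eqref{1.3}, formula \eqref{2.3}, and the definition of $I_{-S}^{\ast}(T)$ --- they use no qualification condition --- and they show that $(0_{\mathcal{L}},-F(\bar{x}))$ belongs to the union set above if and only if there exists $T\in\mathcal{L}_{+}^{w}(S,K)$ with $-F(\bar{x})-I_{-S}^{\ast}(T)\subset(F+I_{C}+T\circ G)^{\ast}(0_{\mathcal{L}})+K$, and likewise if and only if there exists $T\in\mathcal{L}_{+}^{w}(S,K)$ with $F(x)+(T\circ G)(x)-F(\bar{x})\notin T(-S)-\operatorname*{int}K$ for all $x\in C$. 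By the representation just recalled, each of these two conditions is therefore equivalent to $(0_{\mathcal{L}},-F(\bar{x}))\in\operatorname*{epi}{}_{K}(F+I_{A})^{\ast}$.

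Finally I would combine this with \cite[Proposition 5.1]{DGLM16}, according to which $\bar{x}\in A\cap\operatorname{dom}F$ is a weak solution of $\mathrm{(VOP)}$ if and only if $(0_{\mathcal{L}},-F(\bar{x}))\in\operatorname*{epi}{}_{K}(F+I_{A})^{\ast}$; chaining the equivalences of the previous paragraph then delivers precisely $(i)$ and $(j)$. Equivalently, once $(h)$ is known to hold one may simply quote the equivalences $(h)\Leftrightarrow(i)$ and $(h)\Leftrightarrow(j)$ of Theorem \ref{OCA2}. I do not anticipate a genuine obstacle; the only point deserving attention is that Theorem \ref{OCA2} is phrased under the hypotheses of Theorem \ref{theoFV1}, so one must check that the equivalences $(h)\Leftrightarrow(i)$, $(h)\Leftrightarrow(j)$ --- as opposed to the asymptotic formula for $\operatorname*{epi}{}_{K}(F+I_{A})^{\ast}$ --- require no closedness-type assumption, which is indeed the case, while Theorem \ref{theovectorw2} supplies the stronger non-asymptotic identity that makes $(h)$ vacuous here.
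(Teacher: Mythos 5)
Your proof is correct and follows essentially the same route as the paper: invoke Theorem \ref{theovectorw2} together with Lemma \ref{lem4} to conclude that the union set is closed, so $(h)$ holds, and then obtain $(i)$ and $(j)$ via the equivalences behind Theorem \ref{OCA2}. Your additional step of unpacking \eqref{35}, \eqref{36} and \cite[Proposition 5.1]{DGLM16} is a sensible refinement, since it sidesteps the fact that Theorem \ref{OCA2} is formally stated under the hypotheses of Theorem \ref{theoFV1}, a point the paper's one-line proof glosses over.
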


\begin{proof}
Under the current assumptions, we get from Theorem \ref{theovectorw2} and
Lemma \ref{lem4} that
\[
\bigcup_{T\in\mathcal{L}_{+}^{w}(S,K)}\left[  \bigcap_{v\in I_{-S}^{\ast}%
(T)}[\operatorname*{epi}{}_{K}(F+I_{C}+T\circ G)^{\ast}%
+(0\mathbf{{_{\mathcal{L}}}},v)]\right]
\]
is a closed subset of $\mathcal{L}(X,Y)\times Y$. So, $(h)$ holds and the
conclusion comes from Theorem \ref{OCA2}. \medskip
\end{proof}

We now revisit Example \ref{Example2} paying attention to statements $(f)$ and
$(i)$, whose common right-hand side set is
\[
(F+I_{C}+(t_{1},t_{2})\circ G)^{\ast}\left(  0_{\mathcal{L}}\right)
+\mathbb{R}_{+}^{2}=\left\{
\begin{array}
[c]{ll}%
\left(  \mathbb{R\times R}_{+}\right)  \cup\left(  \mathbb{R}_{+}%
\mathbb{\times R}\right)  , & \text{if }t_{1}=t_{2}=0,\\
\left\{  +\infty_{\mathbb{R}^{2}}\right\}  , & \text{if }t_{1}t_{2}>0,\\
\mathbb{R}\left(  t_{1},t_{2}\right)  +\mathbb{R}_{+}^{2}, & \text{else.}%
\end{array}
\right.
\]
It can be easily realized that the elements of $\mathcal{L}_{+}(S,K)$ and
$\mathcal{L}_{+}^{w}(S,K)$ satisfying the optimality conditions in $(f)$ and
$(i)$ are those of%
\[
\left\{  \left(  t_{1},t_{2}\right)  \in\mathbb{R}_{+}^{2}:t_{1}t_{2}%
\leq0\right\}  =\left[  \mathbb{R}_{+}\times\{0\}\right]  \cup\left[
\{0\}\times\mathbb{R}_{+}\right]
\]
and%
\[
\left\{  \left(  t_{1},t_{2}\right)  \in\left(  \mathbb{R\times R}_{+}\right)
\cup\left(  \mathbb{R}_{+}\times\mathbb{R}\right)  :\,t_{1}t_{2}\leq0\right\}
=\left[  -\left(  \mathbb{R}_{+}\right)  \times\mathbb{R}_{+}\right]
\cup\left[  \mathbb{R}_{+}\times-\left(  \mathbb{R}_{+}\right)  \right]  ,
\]
respectively. Since both sets are nonempty, we get the trivial conclusion (as
$F\equiv0_{Y}$) that any feasible solution $\overline{x}$ is weakly
minimal.\medskip

It is worth mentioning that the Farkas-type results in Section 4 and the
optimality conditions in this section can be used to derive duality results
for \textrm{(VOP)}. For instance, the corollary proved below extends the
strong duality result \cite[Theorem 4.2.7]{BGW09}, which was established under
the assumption that $(c_{1})$ holds.

In \cite[p. 138]{BGW09}, the \emph{dual problem} of $\mathrm{(VOP)}$
\emph{with respect to weakly efficient solutions} is defined as:
\[
(\mathrm{DVOP})\quad\operatorname*{WMax}\left\{  y:(T,y)\in B\right\}  ,
\]
where the \emph{dual feasible set} $B$ is the set of pairs $(T,y)\in
\mathcal{L}_{+}(S,K)\times Y$ such that there is no $x\in C\cap
\operatorname*{dom}G$ such that $(F+T\circ G)(x)<_{K}y.\ $Equivalently,
\[
B=\{(T,y)\in\mathcal{L}_{+}(S,K)\times Y:y\notin(F+T\circ G)(C\cap
\operatorname*{dom}G)+\operatorname*{int}K\}.
\]

\begin{corollary}
[Strong duality for the pair \textrm{(VOP)} - \textrm{(DVOP)}]Let
$F\colon\ X\rightarrow Y\cup\{+\infty_{Y}\}$ be a proper $K$-convex mapping,
$C$ be a nonempty convex subset of $X$, and $G\colon X\rightarrow
Z\cup\{+\infty_{Z}\}$ be a proper $S$-convex mapping. Assume that
$A\cap\operatorname*{dom}F\neq\emptyset$ and one of the conditions $(c_{1})$,
$(c_{2})$, or $(c_{3})$ is fulfilled. If $\bar{x}\in A$ is a solution of
$\mathrm{(VOP),}$ then there exists a solution $(\overline{T},\bar{y})$ of
$(\mathrm{DVOP})$ such that $F(\bar{x})=\bar{y}$.
\end{corollary}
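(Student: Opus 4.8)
The plan is to read everything off the optimality condition $(g)$ supplied by Theorem~\ref{OCB1}, whose hypotheses are exactly those of the corollary. Since $\bar x\in A$ is a (weak) solution of $\mathrm{(VOP)}$ and $A\cap\operatorname{dom}F\neq\emptyset$, one necessarily has $\bar x\in\operatorname{dom}F$ (otherwise $F(\bar x)=+\infty_Y$, which is never weakly minimal here), so Theorem~\ref{OCB1} applies and yields $\overline T\in\mathcal L_+(S,K)$ with $F(x)+(\overline T\circ G)(x)-F(\bar x)\notin-\operatorname*{int}K$ for all $x\in C$. I would then set $\bar y:=F(\bar x)\in Y$ and show that $(\overline T,\bar y)$ is a solution of $\mathrm{(DVOP)}$; the equality $F(\bar x)=\bar y$ will hold by construction.

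First I would verify dual feasibility, $(\overline T,\bar y)\in B$. The relation from $(g)$ reads $\bar y\notin(F+\overline T\circ G)(x)+\operatorname*{int}K$ for every $x\in C$; for $x\in C\cap\operatorname{dom}G$ with $x\notin\operatorname{dom}F$ one has $(F+\overline T\circ G)(x)=+\infty_Y$, so by the conventions \eqref{conv} the relation holds trivially there. Hence $\bar y\notin(F+\overline T\circ G)(C\cap\operatorname{dom}G)+\operatorname*{int}K$, which, together with $\overline T\in\mathcal L_+(S,K)$ and $\bar y\in Y$, is precisely $(\overline T,\bar y)\in B$.

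The core step is a weak duality inequality: $F(\bar x)\not<_K y'$ for every $(T',y')\in B$. Since $\bar x\in A=C\cap G^{-1}(-S)$, we have $\bar x\in C\cap\operatorname{dom}G$ and $G(\bar x)\in-S$; as $T'\in\mathcal L_+(S,K)$ forces $T'(-S)\subset-K$, it follows that $(F+T'\circ G)(\bar x)=F(\bar x)+T'(G(\bar x))\in F(\bar x)-K$, i.e.\ $F(\bar x)\in(F+T'\circ G)(\bar x)+K$. Were $y'-F(\bar x)\in\operatorname*{int}K$, then by $K+\operatorname*{int}K=\operatorname*{int}K$ (see \eqref{marco1}) one would get $y'\in(F+T'\circ G)(\bar x)+K+\operatorname*{int}K=(F+T'\circ G)(\bar x)+\operatorname*{int}K\subset(F+T'\circ G)(C\cap\operatorname{dom}G)+\operatorname*{int}K$, contradicting $(T',y')\in B$; this proves the inequality.

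Finally I would conclude. Writing $M:=\{y\in Y:(T,y)\in B\text{ for some }T\in\mathcal L_+(S,K)\}$ for the feasible set of $\mathrm{(DVOP)}$, the first step gives $\bar y\in M$ and the weak duality inequality gives $\bar y\notin M-\operatorname*{int}K$ (since $\bar y\in y'-\operatorname*{int}K$ would mean $\bar y<_K y'$). As $\emptyset\neq M\subset Y$, formula \eqref{eqmax} yields $\operatorname*{WMax}M=M\setminus(M-\operatorname*{int}K)$, so $\bar y\in\operatorname*{WMax}M$; thus $(\overline T,\bar y)\in B$ with $\bar y$ weakly maximal, i.e.\ $(\overline T,\bar y)$ solves $\mathrm{(DVOP)}$, and $F(\bar x)=\bar y$. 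The only point I expect to require care is the bookkeeping, in the first two steps, between the quantifier ``for all $x\in C$'' in $(g)$ and the image $(F+T\circ G)(C\cap\operatorname{dom}G)$ in the definition of $B$, plus ensuring the weak duality inequality holds over \emph{all} of $B$; everything else is a direct application of $(g)$, \eqref{marco1}, and \eqref{eqmax}.
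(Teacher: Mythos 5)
Your proposal is correct and follows essentially the same route as the paper's proof: invoke Theorem \ref{OCB1}$(g)$ to produce $\overline{T}\in\mathcal{L}_{+}(S,K)$ with $(\overline{T},F(\bar{x}))\in B$, then a weak-duality argument at $\bar{x}$ (using $\overline{T}'(-S)\subset-K$ and $K+\operatorname*{int}K=\operatorname*{int}K$) shows $F(\bar{x})\notin M-\operatorname*{int}K$, and \eqref{eqmax} gives $F(\bar{x})\in\operatorname*{WMax}M$. Your extra bookkeeping (checking $\bar{x}\in\operatorname{dom}F$ and the points of $C\cap\operatorname{dom}G$ outside $\operatorname{dom}F$) is sound and only makes explicit details the paper leaves implicit.
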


\begin{proof}
Denote $M:=\{y:(T,y)\in B\}$ and assume that $\bar{x}$ is a solution of
$\mathrm{(VOP)}$. It follows from Theorem \ref{OCB1} that $(g)$\textrm{\ }%
holds, i.e., there exists $\overline{T}\in\mathcal{L}_{+}(S,K)$ such that
$(\overline{T},F(\bar{x}))\in B$. Hence, $F(\bar{x})\in M$.\newline Now, take
an arbitrary $y\in M$. By the definition of the set $M$,
\begin{equation}
\exists T_{0}\in\mathcal{L}_{+}(S,K):y\notin(F+T_{0}\circ G)(C\cap
\operatorname*{dom}G)+\operatorname*{int}K.\label{5.37}%
\end{equation}
Since $\bar{x}\in A$, $G(\bar{x})\in-S$ and so $-T_{0}\circ G(\bar{x})\in K$.
On the other hand, one gets from \eqref{5.37} that $[y-F(\bar{x}%
)]+[-(T_{0}\circ G)(\bar{x})]\notin\operatorname*{int}K$, and hence,
$y-F(\bar{x})\notin\operatorname*{int}K$ as $\operatorname*{int}%
K+K=\operatorname*{int}K$. Since this holds for any $y\in M$, one gets
$F(\bar{x})\notin M-\operatorname*{int}K$.\newline We have just shown that
$F(\bar{x})\in M\setminus(M-\operatorname*{int}K)$, which yields that
$M\neq\emptyset$ and $\operatorname*{WSup}M\neq\{+\infty_{Y}\}$ (see
\eqref{wsup=infty}). It now follows from \eqref{eqmax} that $F(\bar{x}%
)\in\operatorname*{WMax}M$. So $(\overline{T},F(\bar{x}))$ is a solution of
$(\mathrm{DVOP})$ and the proof is complete. \medskip
\end{proof}

\section*{Acknowledgments}

This research was supported by the National Foundation for Science \&
Technology Development (NAFOSTED), from Vietnam, Project 101.01-2015.27
\emph{Generalizations of Farkas lemma with applications to optimization,} by
the Ministry of Economy and Competitiveness of Spain, and by the FEDER of EU,
Project MTM2014-59179-C2-1-P, and by the Australian Research Council, Project DP160100854.

\end{document}